\tikzset{inner sep=0pt,
	root/.style={circle,draw,minimum size=7pt,thick},
	fatroot/.style={circle,draw,minimum size=10pt,thick},
	short root/.style={circle,fill,minimum size=7pt},
	doublearrow/.style={postaction={decorate},
		decoration={markings,mark=at position .7
			with {\arrow{angle 60}}},double distance=3pt,thick}
}
\newtheorem{proposition}{Proposition}[section]
\newtheorem{definition}[proposition]{Definition}
\newtheorem{theorem}[proposition]{Theorem}
\newtheorem{lemma}[proposition]{Lemma}
\newtheorem{corollary}[proposition]{Corollary}
\newtheorem{construction}[proposition]{Construction}
\numberwithin{equation}{section}
\DeclareMathOperator{\GL}{GL}
\DeclareMathOperator{\SL}{SL}
\DeclareMathOperator{\SO}{SO}
\DeclareMathOperator{\Id}{Id}
\DeclareMathOperator{\Hom}{Hom}
\DeclareMathOperator{\Ext}{Ext}
\DeclareMathOperator{\Mat}{Mat}
\DeclareMathOperator{\Sym}{Sym}
\DeclareMathOperator{\vol}{vol}
\DeclareMathOperator{\Vol}{vol}
\DeclareMathOperator{\tr}{tr}
\DeclareMathOperator{\Stab}{Stab}
\DeclareMathOperator{\Spec}{Spec}
\renewcommand{\P}{\mathbb{P}}
\newcommand{\bbP}{\mathbb{P}}
\newcommand{\cO}{\mathcal{O}}
\newcommand{\cM}{\mathcal{M}}
\newcommand{\R}{\mathbb{R}}
\newcommand{\Q}{\mathbb{Q}}
\newcommand{\Z}{\mathbb{Z}}
\DeclareSymbolFont{cyrletters}{OT2}{wncyr}{m}{n}
\DeclareMathSymbol{\Sha}{\mathalpha}{cyrletters}{"58}
\newcommand{\extp}{\@ifnextchar^\@extp{\@extp^{\,}}}
\def\@extp^#1{\mathop{\bigwedge\nolimits^{\!#1}}}
\newcommand{\height}{\mathrm{Ht}}
\DeclareMathOperator{\disc}{disc}
\newcommand{\Siegel}{\mathfrak{S}}
\newcommand{\ShHom}{\mathcal{H}\!\mathit{om}}
\title{Lower bounds on heights of odd degree points of hyperelliptic curves}
\author{Jef Laga and Jack A. Thorne}
\begin{document}

\maketitle

\begin{abstract}
    We develop a reduction theory for the representation of $\SL_n$ on pairs of symmetric $n\times n$ matrices.
    We apply this theory to the pencils of quadrics arising from divisors on hyperelliptic curves.
    We use these results to show that, in a density $1$ family, an odd degree point $P$ of degree at most $2g-1$ on the hyperelliptic curve $z^2 = f_0x^{2g+2} + f_1 x^{2g+1} y + \cdots + f_{2g+2}y^{2g+2}$ cannot have small Weil height.
   % \vspace{0.2in}
    %MSC Classes: 11G30 (Primary) 14G05, 14H25 (Secondary)
\end{abstract}

\tableofcontents

\section{Introduction}

\subsection{Results}
Let $g\geq 1$ be an integer.
Consider the set of integral binary forms $f(x,y) = f_0x^{2g+2}  +f_1 x^{2g+1}y + \cdots + f_{2g+2} y^{2g+2} \in \Z[x,y]$ of degree $2g+2$ and nonzero discriminant, ordered by the height $\height(f) = \max |f_i|$. 
Such an $f$ determines a hyperelliptic genus-$g$ curve $X_f$ over $\Q$ with equation 
\begin{align}\label{eq_introhyperellipticcurve}
    X_f \colon z^2 = f(x,y) 
\end{align}
in weighted projective space $\P(1,1,g+1)$.
When ordered by height, a positive proportion of $X_f$ are everywhere locally soluble, i.e., have $\R$-points and $\Q_p$-points for all primes $p$ \cite{PoonenStoll-localglobaldensities}.

If $g=1$, Bhargava has shown that among the everywhere locally soluble $X_f$, a positive proportion have a rational point, and a positive proportion have not \cite{Bhargava-hasseprincipleplanecubics}.
When $X_f$ does have a rational point, it is natural to ask: how complicated must such a point be?
In this paper, we show that the heights of such points are, typically, not too small: 
\begin{theorem}\label{theorem_intro1}
    Let $\epsilon>0$ be arbitrary and $g=1$.
    Then for $100\%$ of integral binary quartic forms $f$ (ordered by height), every rational point $P = (x:y:z)\in X_f(\Q)$ satisfies 
    \begin{align}
        h((x:y)) \geq \left(\frac{5}{4}- \epsilon\right) \log \height(f),
    \end{align}
    where $h(\alpha)$ denotes the logarithmic Weil height of an element $\alpha \in \P^1(\bar{\Q})$.
\end{theorem}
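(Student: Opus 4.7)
The approach should combine the two main technical contributions of the paper: a reduction theory for the $\SL_n$-representation on pairs of symmetric $n \times n$ matrices, and a geometric dictionary between divisors on $X_f$ and such pairs. For $g=1$, a rational point $P \in X_f(\Q)$ gives rise --- via the pencil-of-quadrics construction applied to a suitably normalized divisor built from $P$ (e.g., $[P] + [\iota(P)]$ or $2[P]$, where $\iota$ is the hyperelliptic involution) --- to an $\SL_4(\Z)$-orbit of pairs of integral symmetric $4\times 4$ matrices $(A_P, B_P)$ whose discriminant is a constant multiple of $f$. This specialization recovers the classical $2$-descent on the Jacobian via integral binary quartic forms, but in a form compatible with the matrix-pair framework of the paper. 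The entries of a natural representative $(A_P, B_P)$ are polynomials of known weight in the homogeneous coordinates of $P$ and the $f_i$, so one obtains an upper bound of the form $\|(A_P, B_P)\|_\infty \leq C \, \height(f)^{a_1} H(P)^{a_2}$, where $H(P) = \exp h((x:y))$ and $a_1, a_2$ are explicit.

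Using the reduction theory developed earlier in the paper, applied to pairs of symmetric $4 \times 4$ matrices of discriminant $f$, I would then show that for $100\%$ of $f$ (ordered by height), every nontrivial integral $\SL_4(\Z)$-orbit with discriminant proportional to $f$ has ``size'' (in a suitable $\SL_4(\Z)$-equivariant norm) bounded below by $c \cdot \height(f)^b$ for a specific exponent $b$; the nontriviality of the orbit coming from $P$ is standard $2$-descent, valid for generic $(f,P)$. Combining with the naive upper bound above yields
\[
\height(f)^b \lesssim \height(f)^{a_1} H(P)^{a_2},
\]
and hence the desired lower bound $h((x:y)) \geq \frac{b-a_1}{a_2} \log \height(f) - O(1)$.

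A direct calculation of $a_1, a_2, b$ from the weight decomposition of the $\SL_4$-action on pairs of symmetric matrices and the degrees of the invariant polynomials cutting out the discriminant should yield exactly $(b - a_1)/a_2 = 5/4$. The main obstacle, I expect, is to make the reduction-theoretic lower bound sharp enough to pin down the constant $5/4$ rather than some weaker exponent; this requires careful control of the $\SL_4(\Z)$-fundamental domain and of where the small integral orbits of a given discriminant actually sit. A secondary obstacle is handling the density-zero exceptional set --- $f$ with anomalously large $2$-Selmer rank, non-generic local data at small primes, or where the integral orbits degenerate --- which should be excised by a standard sieve over the height parameter and absorbed into the ``$100\%$ of $f$'' clause together with the slack $\epsilon$.
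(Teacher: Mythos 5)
You have the right high-level scaffolding (pencil of quadrics $\rightarrow$ integral $\SL_4(\Z)$-orbit $\rightarrow$ reduction theory), but the mechanism you propose for extracting the lower bound is not the one the paper uses, and the step you gloss over is precisely the hard part.

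The most serious gap is the claim that ``for $100\%$ of $f$, every nontrivial integral $\SL_4(\Z)$-orbit with discriminant proportional to $f$ has size bounded below by $c\,\height(f)^b$.'' Stated for individual $f$ this is not something classical reduction theory gives you, and it cannot be obtained by naively controlling a fundamental domain: there will be forms $f$ of large height whose associated orbits have reduced representatives with small coefficients, and one needs a \emph{quantitative density} statement to show these $f$ are rare. The paper obtains exactly such a statement, but not by bounding coefficients. It attaches to each pair $(A,B)$ of nonzero discriminant a canonical inner product $H_{A,B}$ on $\R^n$ (the ``reduction covariant''), shows that the integral orbit built from a small-height divisor $D$ carries a distinguished primitive vector $\overline{1}\in \Z^n$ whose $H_{A,B}$-norm is explicitly controlled by $h(D)$ and $\height(f)$ (Propositions~\ref{prop_prepnormof1computation}, \ref{prop_normof1}, \ref{prop_massagingformuladensity1family}), and then proves an \emph{equidistribution theorem} (Theorem~\ref{theorem: equidistribution reduction covariant}) for the map $\SL_n(\Z)\backslash V(\Z)^{\Delta\neq 0}\to \SL_n(\Z)\backslash X$. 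Since the subset of $\SL_n(\Z)\backslash X$ admitting an $\epsilon$-short vector has small measure (Proposition~\ref{prop_good_neighbourhoods_of_cusp}), equidistribution forces the ``short vector'' event to be rare. This is not a bound on $\|(A,B)\|_\infty$ at all, and there is no direct comparison of two powers of $\height(f)$ against powers of $H(P)$; the exponent $5/4 = (n+1)/n$ with $n=4$ comes out of the explicit norm computation for $\overline{1}$ together with the discriminant lower bound $|\disc f|\gg X^{2n-2-\delta}$, not from a weight count on the representation.

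A secondary issue: the paper's construction of the integral orbit does not produce ``a natural representative with polynomial entries in the coordinates of $P$ and the $f_i$.'' It proceeds through reflexive sheaves on the integral model $X$ and coherent Grothendieck duality on $S_f$ (Sections~\ref{sec_binary_forms}, \ref{sec_integral_orbit_representatives}); there is no explicit matrix formula. The divisor used is $D=[P]$ of degree $2g-1=1$, not $[P]+[\iota P]$ or $2[P]$. Also, the density-zero exceptional set is not tied to $2$-Selmer rank or local conditions at small primes: it is cut out by irreducibility of $f$ (Hilbert irreducibility) and a lower bound on $|\disc f|$ (Proposition~\ref{prop_subfamilyhasdensity1}). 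Finally, nontriviality of the orbit is not needed in the paper's argument; the equidistribution result applies to all nondegenerate orbits, reducible or not.
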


We prove a similar theorem for hyperelliptic curves of every genus $g\geq 1$.
Bhargava--Gross--Wang have shown that among the everywhere locally soluble $X_f$, a positive proportion have no points over any odd degree extension of $\Q$ \cite{BGW17}.
Analogously to the $g=1$ case, it seems reasonable to expect (using the heuristics of Poonen and Rains \cite{PoonenRainsRandom}) that a positive proportion of $X_f$ do have an algebraic point of odd degree at most $g+1$, and again we can ask about the complexity of such a point.

\begin{theorem}\label{theorem_intro2}
    Let $\epsilon >0$ be arbitrary.
    Then for $100\%$ of integral binary forms $f$ of degree $2g+2$ and nonzero discriminant, every algebraic point $P  = (x:y:z) \in X_f(\bar{\Q})$ of odd degree $m\leq 2g-1$ satisfies
    \begin{align}
        m\cdot  h((x:y)) \geq \left( 1+ \frac{1}{2g+2} - \epsilon \right) \log \height(f).
    \end{align}
\end{theorem}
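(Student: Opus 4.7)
The theorem is equivalent to showing that the set of $f$ with $\height(f) \leq T$ whose curve $X_f$ admits an odd-degree point of degree $m \leq 2g-1$ violating the claimed bound has density tending to zero as $T \to \infty$. My strategy is to combine the orbit parametrization via pencils of quadrics (as developed earlier in the paper, following Bhargava--Gross--Wang) with the reduction theory for pairs of symmetric matrices and a counting sieve.

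First I would translate the height bound into a size bound for an integral matrix pair. The effective divisor of degree $m$ determined by $P$ and its Galois conjugates corresponds, through the pencil-of-quadrics construction, to an integral element $v_P = (A_P, B_P) \in \Sym^2(\Z^{2g+2})^{\oplus 2}$ with invariant $\det(xA_P - yB_P) = \pm f(x,y)$. The entries of $v_P$ are explicit polynomials in the coordinates of $P$ and its Galois conjugates, so $\log\|v_P\|$ scales linearly in $m \cdot h((x:y))$ up to an additive $O(\log \height(f))$ term. Next I would invoke the reduction theory: there exists $\gamma \in \SL_{2g+2}(\Z)$ such that $\gamma \cdot v_P$ lies in a Siegel fundamental domain $\mathfrak{F}$, in which the entries are bounded by a constant multiple of $\height(f)^{1/(2g+2)}$ away from the cusps.

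The final step is a counting argument. For each candidate low-height point $P$, the condition $z_0^2 = f(x_0, y_0)$ imposes $m$ linear Galois-conjugate constraints on the coefficients $(f_0, \ldots, f_{2g+2})$, so the number of $f \in \Z^{2g+3}$ with $\height(f) \leq T$ consistent with $P$ is at most $T^{2g+3-m}$ divided by the covolume of the constraint sublattice, which is controlled in terms of $\|v_P\|$ via the reduction theory of the previous step. Summing over the Northcott-bounded set of candidate $P$'s of odd degree $m \leq 2g-1$ and bounded height shows that the total number of bad $f$'s is $o(T^{2g+3})$ precisely once $m \cdot h((x:y))$ exceeds $\bigl(1 + \tfrac{1}{2g+2}\bigr)\log T$, which is the exponent claimed.

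The hard part will be the cusp analysis: near the boundary of $\mathfrak{F}$, the size estimate on orbit representatives degenerates, and one must argue separately that $f$'s whose orbits lie in cuspidal regions form a density-zero subset. This is where the fine cuspidal description of $\mathfrak{F}$ produced by the reduction theory is essential---it must be combined with the specific polynomial formulas for the map $P \mapsto v_P$ to ensure that the degeneracies coming from low-height points are absorbed into the density-zero part. The sharp exponent $1 + 1/(2g+2) = (2g+3)/(2g+2)$ will emerge from this analysis as the ratio of the dimension of the space of binary forms to the degree of the characteristic polynomial of the pencil.
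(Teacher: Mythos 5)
The approach you outline diverges fundamentally from the paper's and contains a genuine gap: the key mechanism the paper uses—\emph{equidistribution of the reduction covariant}—is entirely absent. The paper does not count low-height points $P$ and sum their contributions; it instead (i) associates to each divisor $D$ of degree $2g-1$ an integral orbit $(A,B)\in V_f(\Z)$ together with a \emph{specific} primitive vector $\bar 1\in\Z^n$ whose norm $(\bar 1,\bar 1)_{H_{A,B}}$ with respect to the reduction covariant is computed explicitly (Proposition~\ref{prop_normof1}); (ii) shows that $h(D)$ small forces $\bar 1$ to be an $\epsilon$-short vector of the lattice $(\Z^n,H_{A,B})$ (Theorem~\ref{theorem_upperboundvectorlengthusingheight}); (iii) proves an equidistribution theorem (Theorem~\ref{theorem: equidistribution reduction covariant}) for the reduction covariant map $\mathcal R:\SL_n(\Z)\backslash V(\Z)^{\Delta\neq 0}\to \SL_n(\Z)\backslash X$; and (iv) deduces the density-zero statement because short-vector lattices form a small-measure subset of $\SL_n(\Z)\backslash X$ (Proposition~\ref{prop_good_neighbourhoods_of_cusp}, Corollary~\ref{corollary_polyswithsmallnormvectorrare}). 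None of these steps appear in your plan.

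Beyond the missing equidistribution, your counting argument has an internal problem: the claim that the covolume of the constraint sublattice $\{f : z_0^2 = f(x_0,y_0) \text{ and conjugates}\}\subset\Z^{2g+3}$ is ``controlled in terms of $\|v_P\|$ via the reduction theory of the previous step'' does not hold. That covolume is governed by the height of $(x_0:y_0)$ (via the entries $\sigma(x_0)^i\sigma(y_0)^{n-i}$ appearing in the constraint matrix), whereas the reduction covariant $H_{A,B}$ lives on the $n$-dimensional space $W_\R$, not on the $(2g+3)$-dimensional coefficient space of $f$; there is no route from one to the other in what you have written. Similarly, ``$\log\|v_P\|$ scales linearly in $m\cdot h((x:y))$ up to $O(\log\height(f))$'' is not what the paper proves or uses—the relevant quantity is the $H_{A,B}$-norm of the specific vector $\bar 1$, and it depends on the discriminant of $f$ in an essential way, which is precisely why the density-one subfamily $\mathcal F_\delta(X)$ with $|\disc f|\geq X^{2n-2-\delta}$ must be introduced (Proposition~\ref{prop_massagingformuladensity1family}). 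Finally, the sum over Northcott-bounded candidate points is not carried out, and it is far from clear that it would produce the exponent $1+\tfrac{1}{2g+2}$; your closing remark that this exponent ``will emerge as the ratio of the dimension of the space of binary forms to the degree of the characteristic polynomial'' is a numerological observation, not a derivation. In the paper the exponent arises because $\det H_{A,B}=|c|\asymp \height(f)$ (Lemma~\ref{lemma: determinant of reduction covariant}) enters with weight $1$ while $\#\mathcal F(X)\asymp X^{n+1}$ enters with weight $n+1$, producing $(n+1)/n$ after balancing against $n\cdot h(D)$.
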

In fact, we prove a slightly more general result that applies to effective divisors of degree $2g-1$ on $X_f$, see Theorem \ref{theorem_lowerboundheightdivisor}. 

Theorems \ref{theorem_intro1} and \ref{theorem_intro2} are analogous to results proved in our previous work \cite{lagathorne2024smallheightoddhyperelliptic} for the family of odd hyperelliptic curves of genus $g$ (roughly speaking, the subfamily defined by the conditions $f_0 = 0$, $f_1 = 1$). We compare the methods used in this work to those of \cite{lagathorne2024smallheightoddhyperelliptic} below.

\subsection{Methods}
Let $n=2g+2$ and consider the representation of $\SL_n$ on the space $V$ of pairs of symmetric $n\times n$ matrices, where the action is defined by $g\cdot (A,B) = (g^{-t}Ag^{-1},g^{-t}Bg^{-1})$.
To such a pair $(A,B)$, we may associate an invariant binary form 
\begin{align*}
    f_{A,B}(x,y) = (-1)^{n(n-1)/2}\det(Ax -By)
\end{align*}
whose coefficients freely generate the ring of polynomial invariants for the $\SL_n$-action on $V$ \cite{BGW17}.

A key step in our proof of Theorem \ref{theorem_intro2} is the development of a reduction theory for the $\SL_n(\Z)$-action on $V(\Z)$.
Generally speaking, given an action of an arithmetic group on a set, reduction theory aims to find `reduced' representatives of orbits for the action, e.g., representatives whose coefficients are small.
In our setting, we define for every pair $(A,B)\in V(\R)$ with $\disc(f_{A,B})\neq 0$ an inner product $H_{A,B}$ on $\R^n$, whose formation is $\SL_n(\R)$-equivariant.
We say that a pair $(A,B)$ is Minkowski (or LLL) reduced if the standard basis of $\R^n$ is Minkowski (or LLL) reduced with respect to $H_{A,B}$.
The coefficients of reduced pairs (in either sense) can be absolutely bounded in terms of the coefficients of $f_{A,B}$, and so the problem of reducing elements in $V(\Z)$ becomes a problem in the reduction theory of lattices, which is well understood.

Let $X = \SL_n(\R)/\SO_n(\R)$ denote the symmetric space of inner products on $\R^n$ up to scaling. 
We call the association $(A,B)\mapsto H_{A,B}$ or, by passage to the quotient, the association
\begin{align}\label{eqintro_reductioncovariant}
    \mathcal{R}\colon \SL_n(\Z) \backslash V(\Z)^{\Delta \neq 0} \rightarrow \SL_n(\Z)\backslash X
\end{align}
the `reduction covariant' of the representation $V$, akin to the reduction covariant defined in \cite{CremonaFisherStoll-minimisationreductioncoveringselliptic, StollCremona-reductionbinaryforms, Thorne-reduction}.
In the context of \cite{CremonaFisherStoll-minimisationreductioncoveringselliptic}, the reduction covariant is used in the algorithmic study of $n$-descent on elliptic curves, and we similarly expect our reduction covariant to be useful for computational purposes.

In this paper, we instead employ the reduction covariant for theoretical purposes, and relate it to the descent theory of hyperelliptic curves.
Suppose $f(x,y)\in \Z[x,y]$ is a binary form of degree $n$ and nonzero discriminant, and let $V_f$ be the subset of $(A,B)$ with $f_{A,B} = f$.
Given an effective $\Q$-rational divisor $D$ on $X_f$ of degree $2g-1$, we build on the methods of \cite{BGW17} to produce an integral orbit $\SL_n(\Z)\cdot (A,B) \subset V_f(\Z)$, which then has an associated reduction covariant $[H_{A, B}] \in \SL_n(\Z) \backslash X$. 
We show that (in a precise sense) if $D$ has small height, then $\Z^n$ has a primitive element of small norm with respect to $H_{A,B}$.
On the other hand, we show that the reduction covariants of orbits of height bounded by $X$  are equidistributed as $X \to \infty$, with respect to the natural measure on the target of (\ref{eqintro_reductioncovariant}). 
Since the subset of inner products in $\SL_n(\Z)\backslash X$ admitting a nonzero vector of small norm has small measure, we conclude that only a small proportion of curves $X_f$ can admit a divisor $D$ of small height.

The construction of integral orbits alluded to in the previous paragraph is a key technical innovation in this paper, which we achieve by introducing a new method to construct such orbits.
This is essentially a local question, so we may begin with a rational odd degree divisor over $\Q_p$ and construct an element of $V(\Z_p)$. 
Such a construction was given in our setting in \cite[Theorem 15]{BGW17}, although their argument contains gaps, which were corrected by Swaminathan \cite[Appendix 4A]{Swaminathan-thesis}.
A construction in the setting of odd hyperelliptic curves was given by Bhargava--Gross \cite[Propostion 19]{BhargavaGross}, but this includes a reduction step that makes it not useful for our purpose here (where we have to be able to show that a specific vector is integral with respect to the given integral structure, in order to know that its length cannot be too small too often). In our previous work \cite{lagathorne2024smallheightoddhyperelliptic}, we resolved this by a more careful construction that still, like the one in \cite{BhargavaGross}, relies on an induction of the degree of the divisor. 

The construction of integral orbits we give here uses the interpretation of these orbits given by Wood \cite{Woo14}, namely as corresponding to certain classes of ideals for the ring associated to the binary form $f(x, y)$. This ring may be thought of as the global sections of the structure sheaf on the subscheme $S_f$ of $\P^1_{\Z_p}$ cut out by $f$. We associate to a divisor $D$ a torsion-free sheaf $\cO_X(D)$ on the hyperelliptic curve $X_f$ over $\Z_p$ defined by the same equation $z^2 = f(x, y)$; this curve contains $S_f$ as the locus $z = 0$, and the integral orbit we construct is the one associated to the ideal class of $H^0(S_f, \cO_X(D)|_{S_f})$. The hard part is to show that this ideal class has the right properties to correspond to an integral orbit; this we accomplish using coherent Grothendieck duality for the scheme $S_f$. 

We now describe the relation between the methods employed here and those used to prove the main theorems of our previous work on odd hyperelliptic curves \cite{lagathorne2024smallheightoddhyperelliptic}. In that paper, we employed a broadly similar strategy, using the equidistribution of reduction covariants, to obtain height lower bounds in a density-$1$ family of monic odd genus-$g$ hyperelliptic curves, using the representation of $\SO_{2g+1}$ on self-adjoint traceless $(2g+1)\times (2g+1)$ matrices. However, there are some key differences. 
First, the representation of $\SL_n$ considered in this paper falls outside the scope of Vinberg theory, so the general construction of the reduction covariant of \cite{Thorne-reduction} does not apply here. In fact, for the representation of $\SL_n$ there are infinitely many reduction covariants, and it is important to select one with the correct properties. 
Second, the reduction covariant here equidistributes over all integral orbits of nonzero discriminant in $V$, while in \cite{lagathorne2024smallheightoddhyperelliptic} it only equidistributes over the \emph{irreducible} integral orbits, and handling the reducible orbits required an additional argument using the methods of \cite{Bhargava-squarefree}. 
Finally, the construction of integral orbits described above uses a different method to the one employed in \cite{lagathorne2024smallheightoddhyperelliptic}. It seems likely that the method used here would also work in that setting. 

\subsection{Organization}

In \S\ref{sec_binary_forms}, we recall some results of Wood \cite{Woo11, Woo14} concerning binary forms and the rank $n$ rings they define.
We emphasize the role played by the closed subscheme $S_f\subset \P^1_{\Z}$ cut out by the form $f$ and show, using coherent duality, how certain sheaves on $\mathcal{M}$ give rise to integral orbits for the representation $V$.
In \S\ref{sec_the_representation} we describe the representation $V$ of $\SL_n$ together with its rational and integral orbits.
We also introduce the reduction covariant of an element $(A,B) \in V(\R)$ of nonzero discriminant.
In \S\ref{sec_integral_orbit_representatives}, we present the crucial construction of rational and integral orbits associated to effective divisors of odd degree on $X_f$ satisfying some conditions.
Moreover, we compute the norm of a certain element in $\Z^n$ with respect to the reduction covariant of this integral orbit.
In \S\ref{sec: equidistribution}, we prove that the reduction covariant \eqref{eqintro_reductioncovariant} equidistributes, by adapting the geometry-of-numbers methods of Bhargava \cite{bhargava2015mosthyperellipticarepointless}.
In \S\ref{sec: a height lower bound for divisors}, we combine all of these ingredients to prove the main theorems of the introduction.

\section{Binary forms and ideal classes}\label{sec_binary_forms}
%\jack{References: \cite{Woo11, Woo14}}
There is a long-studied association between binary forms of fixed degree $n$ and rings of rank $n$ (see e.g.\ \cite{Nak89}), which has been analysed in detail by Wood \cite{Woo11}, who also studied the relation between ideal classes for these rings and orbits in the representation $V$ of $\SL_n$ studied later in this paper \cite{Woo14}. In this section, we first recall some of Wood's results, and complement them with a geometric construction of orbits associated to sheaves on the zero locus of $S_f$. 
Our desire to include forms that are not primitive makes various proofs more technical.

Let $n \geq 2$ be an integer, and let $A$ be a Dedekind domain of fraction field $K$ of characteristic $0$.
If $k \in \mathbb{Z}$, write $\cO_{\bbP^1_A}(k)$ for the usual sheaf on $\bbP^1_A$ whose global sections are the forms in $A[x, y]$ that are homogeneous of degree $k$.
Let
\[ f(x, y) = f_0 x^n + f_1 x^{n-1} y + \dots + f_n y^n \in H^0(\bbP^1_A, \cO_{\bbP^1_A}(n)) \]
be  a form whose discriminant $\Delta(f) \in A$ is nonzero, define $S_f \subset \mathbb{P}^1_A$ to be the closed subscheme defined by the vanishing of $f$, and $R_f = H^0(S_f, \cO_{S_f})$.
\begin{proposition}\label{prop_R_f_is_free_over_A}
    The ring $R_f$ is a free $A$-algebra of rank $n$. 
\end{proposition}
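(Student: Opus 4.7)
The plan is to compute $R_f = H^0(S_f, \cO_{S_f})$ cohomologically, exploiting the fact that $S_f$ is cut out by the single equation $f = 0$ in $\bbP^1_A$. Since $A$ is an integral domain (being Dedekind) and $f$ is a nonzero element of $H^0(\bbP^1_A, \cO_{\bbP^1_A}(n))$ (nonzero discriminant forces $f \neq 0$), the integral scheme $\bbP^1_A$ admits $f$ as a nonzerodivisor section, giving the Koszul short exact sequence
\[ 0 \to \cO_{\bbP^1_A}(-n) \xrightarrow{\cdot f} \cO_{\bbP^1_A} \to \cO_{S_f} \to 0. \]
The strategy is then to push this sequence down to $\Spec A$ and read off $R_f$ from the long exact sequence in cohomology.

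Applying $\pi_*$ with $\pi \colon \bbP^1_A \to \Spec A$ and invoking the classical computation of cohomology of $\cO(k)$ on $\bbP^1$ (which holds over any Noetherian base by flat base change), I would use: $\pi_* \cO_{\bbP^1_A} = A$, $R^1 \pi_* \cO_{\bbP^1_A} = 0$, $\pi_* \cO_{\bbP^1_A}(-n) = 0$, and (for $n \geq 2$) $R^1 \pi_* \cO_{\bbP^1_A}(-n)$ is a free $A$-module of rank $n-1$. The long exact sequence then degenerates into the short exact sequence of $A$-modules
\[ 0 \to A \to R_f \to A^{n-1} \to 0. \]
Since $A^{n-1}$ is free and hence projective, this sequence splits, yielding $R_f \cong A^n$ as $A$-modules. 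The $A$-algebra structure on $R_f$ is inherited for free from the ring structure on the sheaf $\cO_{S_f}$.

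There is no real obstacle here beyond careful bookkeeping: each ingredient is standard, and the exactness of the Koszul sequence only requires that $\bbP^1_A$ be integral and $f$ nonzero. The full discriminant hypothesis $\Delta(f) \neq 0$ is stronger than what is strictly needed for this basic freeness statement; it will presumably be used elsewhere in the section to ensure, for instance, that $R_f$ is generically étale over $A$, or that the fibers of $S_f \to \Spec A$ are geometrically reduced away from the discriminant locus.
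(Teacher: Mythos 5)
Your argument is the same as the paper's: both use the Koszul short exact sequence $0 \to \cO_{\bbP^1_A}(-n) \to \cO_{\bbP^1_A} \to \cO_{S_f} \to 0$, take cohomology to obtain $0 \to A \to R_f \to A^{n-1} \to 0$, and conclude freeness of rank $n$ (the paper leaves the splitting-by-projectivity step implicit, and works with $H^1(\bbP^1_A,-)$ rather than $R^1\pi_*$, but these are cosmetic differences over the affine base $\Spec A$). Your remark that the full $\Delta(f)\neq 0$ hypothesis is not needed for this particular step is also correct.
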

\begin{proof}
    We consider the short exact sequence of coherent sheaves on $\bbP^1_A$:
    \[ 0 \to \cO_{\bbP^1_A}(-n) \overset{\times f}{\to} \cO_{\bbP^1_A} \to \cO_{S_f} \to 0. \]
    After passing to global sections, we get a short exact sequence
    \[ 0 \to A \to R_f \to H^1(\bbP^1_A, \cO_{\bbP^1_A}(-n)) \to 0. \]
    Since $H^1(\bbP^1_A, \cO_{\bbP^1_A}(-n))$ is a free $A$-module of rank $n-1$, this completes the proof. 
\end{proof}
\begin{proposition}\label{prop_primitive_implies_weierstrass_locus_is_affine}
    If $f(x, y)$ is primitive, then the scheme $S_f$ is affine. 
\end{proposition}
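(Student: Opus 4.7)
The plan is to prove that $S_f$ is affine by showing that the structure morphism $\pi \colon S_f \to \Spec(A)$ is finite; since finite morphisms are affine and the target $\Spec(A)$ is affine, this will give the desired conclusion.

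First I will observe that $\pi$ is proper, as it factors as the closed immersion $S_f \hookrightarrow \bbP^1_A$ followed by the proper morphism $\bbP^1_A \to \Spec(A)$. Moreover $\pi$ is of finite type, and since $A$ is Noetherian (being Dedekind) so are both source and target. I will then invoke the standard consequence of Zariski's main theorem that a proper quasi-finite morphism of finite type between Noetherian schemes is finite. Thus it suffices to verify that $\pi$ has finite fibers.

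To check quasi-finiteness, let $p \in \Spec(A)$ and write $k = k(p)$ for its residue field. The fiber $\pi^{-1}(p) \subset \bbP^1_k$ is the closed subscheme cut out by the reduction $\bar f \in k[x,y]$ of $f$. The primitivity hypothesis says exactly that $(f_0, \dots, f_n) = A$, so for every prime $p$ at least one $f_i$ is a unit modulo $p$; in particular $\bar f$ is a nonzero homogeneous form of degree $n$ over the field $k$. Such a form defines a proper closed subscheme of $\bbP^1_k$, consisting of at most $n$ closed points, which is certainly finite. Combining this with properness yields finiteness of $\pi$, and hence affineness of $S_f$.

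The only substantive step is the observation that primitivity is precisely what is needed to force $\bar f \neq 0$ on every fiber; the rest is a routine application of standard properties of proper morphisms, so I do not anticipate any genuine obstacle. (Alternatively, one could combine Proposition \ref{prop_R_f_is_free_over_A} with Stein factorization: the morphism $\pi$ has $\pi_* \cO_{S_f} = R_f$ finite over $A$, and the fiber-finiteness argument above shows the Stein map $S_f \to \Spec R_f$ is bijective on points, hence an isomorphism by properness.)
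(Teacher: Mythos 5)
Your argument is correct but takes a genuinely different route from the paper. You show directly that the structure morphism $\pi\colon S_f \to \Spec A$ is proper (being a closed subscheme of $\bbP^1_A$) and quasi-finite (primitivity forces $\bar f \neq 0$ in every fiber, hence at most $n$ points), and then invoke Zariski's main theorem in the form ``proper $+$ quasi-finite $\Rightarrow$ finite'' to conclude $\pi$ is finite, hence affine. The paper instead localises on $\Spec A$ and constructs an auxiliary homogeneous form $g$ whose zero locus is disjoint from $S_f$ in every fiber, so that $S_f$ embeds as a closed subscheme of the affine open $D^+(g) \subset \bbP^1_A$. Both arguments turn on the identical observation that primitivity makes $\bar f$ nonzero fiberwise; the paper's approach is more elementary (no Zariski's main theorem) but requires some low-level bookkeeping to produce $g$, especially over finite residue fields, while your approach outsources that work to a standard but heavier theorem and is cleaner as a result. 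Your parenthetical alternative via Stein factorization and Proposition~\ref{prop_R_f_is_free_over_A} is also valid and in the same spirit.
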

To say that $f(x, y)$ is primitive is to say that its content (i.e. the ideal of $A$ generated by $f_0, \dots, f_n$) is the unit ideal. 
\begin{proof}
    It suffices to prove this affine locally on $\Spec A$, so we can assume that there exists a homogeneous form $g(x, y) \in A[x, y]$ of some degree $m$ such that for every non-zero prime ideal $P \leq A$, the reductions modulo $P$ $\overline{f}, \overline{g} \in (A / P)[x, y]$ are non-zero and cut out disjoint closed subschemes of $\bbP^1_{A / P}$. Then $S_f$ is identified with a closed subscheme of the distinguished open $D^+(g) \subset \bbP^1_A$, and is therefore affine.
\end{proof}
If $k \in \mathbb{Z}$, we write $\cO_{S_f}(k)$ for the pullback of $\cO_{\bbP^1_A}(k)$ to $S_f$. We define $I_f = H^0(S_f, \cO_{S_f}(n-3))$ and $J_f = H^0(S_f, \cO_{S_f}(n-2))$. Then $I_f, J_f$ are finite $R_f$-modules that are free over $A$ of rank $n$ (the proof is a twist of the proof of Proposition \ref{prop_R_f_is_free_over_A}). Taking cohomology, we see that there are short exact sequences
\begin{equation}\label{eqn_I_f} 0 \to H^0(\bbP^1_A, \cO_{\bbP^1_A}(n-3)) \to I_f \to H^1(\bbP^1_A, \cO_{\bbP^1_A}(-3)) \to 0 
\end{equation}
and
\begin{equation}\label{eqn_J_f} 0 \to  H^0(\bbP^1_A, \cO_{\bbP^1_A}(n-2)) \to J_f \to H^1(\bbP^1_A, \cO_{\bbP^1_A}(-2)) \to 0. 
\end{equation}
The group $H^1(\bbP^1_A, \cO_{\bbP^1_A}(-2))$ is free of rank 1 over $A$, a basis being given by the class represented in Cech cohomology with respect to the cover $D^+(x), D^+(y)$ by the element $(xy)^{-1} \in H^0(D^+(xy), \cO_{\bbP^1_A}(-2))$. We write $\zeta : J_f \to A$ for the $A$-linear form induced by the exact sequence (\ref{eqn_J_f}) and this choice of basis element. 

If $a(x, y) \in H^0(\bbP^1_A, \cO_{\bbP^1_A}(1))$ is a linear form, write $\mathrm{ev}_a : I_f \to A$ for the composite
\[\mathrm{ev}_a\colon  I_f \to H^1(\bbP^1_A, \cO_{\bbP^1_A}(-3)) \to H^1(\bbP^1_A, \cO_{\bbP^1_A}(-2)) \to A, \]
where the second map is induced by multiplication by $a$, and the third is $\zeta$.

\begin{lemma}\label{lemma_identificationconnectingmaps}
    For every linear form $a$, $\mathrm{ev}_a$ equals the composite
    \[\beta_a\colon  I_f \to J_f \to A, \]
    where the first map is induced by passage to global sections of the morphism $\times a : \cO_{S_f}(n-3) \to \cO_{S_f}(n-2)$, and the second map is $\zeta$.
\end{lemma}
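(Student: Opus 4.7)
The plan is to deduce the lemma from the naturality of the long exact sequence of cohomology with respect to morphisms of short exact sequences of sheaves, applied to multiplication by $a$.

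First I would set up the correct morphism of short exact sequences. Twisting the defining short exact sequence $0 \to \cO_{\bbP^1_A}(-n) \xrightarrow{\times f} \cO_{\bbP^1_A} \to \cO_{S_f} \to 0$ by $n-3$ and $n-2$ gives the two short exact sequences whose long exact sequences in cohomology produce (\ref{eqn_I_f}) and (\ref{eqn_J_f}). Multiplication by the linear form $a$ defines a commutative diagram
\[
\begin{CD}
0 @>>> \cO_{\bbP^1_A}(-3) @>{\times f}>> \cO_{\bbP^1_A}(n-3) @>>> \cO_{S_f}(n-3) @>>> 0 \\
@. @V{\times a}VV @V{\times a}VV @V{\times a}VV \\
0 @>>> \cO_{\bbP^1_A}(-2) @>{\times f}>> \cO_{\bbP^1_A}(n-2) @>>> \cO_{S_f}(n-2) @>>> 0,
\end{CD}
\]
commutativity being immediate from commutativity of multiplication of sections.

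Next I would take global sections and invoke functoriality of the long exact sequence of cohomology. The connecting maps $\delta\colon I_f \to H^1(\bbP^1_A, \cO_{\bbP^1_A}(-3))$ and $\delta'\colon J_f \to H^1(\bbP^1_A, \cO_{\bbP^1_A}(-2))$ extracted from (\ref{eqn_I_f}) and (\ref{eqn_J_f}) then fit in a commutative square
\[
\begin{CD}
I_f @>\delta>> H^1(\bbP^1_A, \cO_{\bbP^1_A}(-3)) \\
@V{\times a}VV @V{\times a}VV \\
J_f @>{\delta'}>> H^1(\bbP^1_A, \cO_{\bbP^1_A}(-2)),
\end{CD}
\]
where the left vertical is the map induced by $\times a\colon \cO_{S_f}(n-3) \to \cO_{S_f}(n-2)$ on global sections.

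Finally, I would unwind the two definitions using this square. By construction, $\zeta\colon J_f \to A$ factors as $J_f \xrightarrow{\delta'} H^1(\bbP^1_A, \cO_{\bbP^1_A}(-2)) \xrightarrow{\zeta} A$, where the second map is the chosen trivialization. Hence
\[
\beta_a = \zeta \circ \delta' \circ (\times a) = \zeta \circ (\times a) \circ \delta = \mathrm{ev}_a,
\]
where the middle equality is the commutative square above. There is no real obstacle here beyond carefully identifying the two instances of $\zeta$ (as a map from $J_f$ and as a map from $H^1(\bbP^1_A, \cO_{\bbP^1_A}(-2))$); once the naturality diagram is in place, the proof is a purely formal diagram chase.
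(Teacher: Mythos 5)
Your proposal is correct and follows essentially the same route as the paper: the paper's proof consists precisely of exhibiting the commutative diagram of short exact sequences with vertical multiplication-by-$a$ maps and appealing to naturality of the connecting homomorphism. You have simply spelled out the resulting commutative square and the diagram chase in more detail; there is no substantive difference in method.
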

\begin{proof}
     This follows by considering the commutative diagram with exact rows
    \[ \xymatrix{ 0 \ar[r] & \cO_{\bbP^1_A}(-3)\ar[d] \ar[r] & \cO_{\bbP^1_A}(n-3) \ar[d] \ar[r] & \cO_{S_f}(n-3) \ar[d] \ar[r] & 0 \\ 
    0 \ar[r] & \cO_{\bbP^1_A}(-2) \ar[r] & \cO_{\bbP^1_A}(n-2) \ar[r] & \cO_{S_f}(n-2)  \ar[r] & 0, }
    \]
    where the vertical maps are all multiplication by $a$. 
\end{proof}

\begin{proposition}\label{prop_application_of_global_duality}
    Let $\cM$ be a coherent sheaf on $S_f$ such that $H^1(S_f, \cM) = 0$ (this is automatic if $f$ is primitive), and let $M = H^0(S_f, \cM)$. Then there is a canonical isomorphism
    \[ \Hom_{\cO_{S_f}}(\cM, \cO_{S_f}(n-2)) \cong \Hom_A(M, A), \]
    given by passage to global sections and then composition with $\zeta$. 
\end{proposition}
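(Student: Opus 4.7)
The plan is to apply $\Hom_{\cO_{\bbP^1_A}}(i_*\cM, -)$, where $i\colon S_f \hookrightarrow \bbP^1_A$ denotes the closed embedding, to the short exact sequence
$$0 \to \cO_{\bbP^1_A}(-2) \xrightarrow{\cdot f} \cO_{\bbP^1_A}(n-2) \to \cO_{S_f}(n-2) \to 0,$$
and then to identify the resulting $\Ext^1$ group using relative Serre duality on $\bbP^1_A / \Spec A$, whose relative dualizing sheaf is $\cO_{\bbP^1_A}(-2)$.

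First I would observe two vanishings. The first, that $\Hom_{\cO_{\bbP^1_A}}(i_*\cM, \cO_{\bbP^1_A}(k)) = 0$ for every $k \in \Z$, follows from the fact that $i_*\cM$ is annihilated by $f$ while $\cO_{\bbP^1_A}(k)$ is torsion-free on the integral scheme $\bbP^1_A$. The second, that the multiplication-by-$f$ map $\Ext^1(i_*\cM, \cO_{\bbP^1_A}(-2)) \to \Ext^1(i_*\cM, \cO_{\bbP^1_A}(n-2))$ vanishes, follows by the standard Yoneda identity applied to the morphism of extensions given by the three vertical maps $\cdot f$: here the rightmost vertical arrow $i_*\cM \to (i_*\cM)(n)$ is zero since $i_*\cM$ is an $\cO_{S_f}$-module. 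These vanishings collapse the long exact sequence to an isomorphism $\Hom_{\cO_{S_f}}(\cM, \cO_{S_f}(n-2)) \cong \Ext^1(i_*\cM, \cO_{\bbP^1_A}(-2))$ via the connecting map. Next, relative Serre duality gives a canonical isomorphism
$$R\Hom_{\cO_{\bbP^1_A}}(i_*\cM, \cO_{\bbP^1_A}(-2))[1] \cong R\Hom_A(R\Gamma(S_f, \cM), A),$$
and the hypothesis $H^1(S_f, \cM) = 0$ collapses the right-hand side to $\Hom_A(M, A)$ in degree $0$, yielding $\Ext^1(i_*\cM, \cO_{\bbP^1_A}(-2)) \cong \Hom_A(M, A)$. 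The trace map underlying this duality is the identification $H^1(\bbP^1_A, \cO_{\bbP^1_A}(-2)) \cong A$ via the Čech basis element $(xy)^{-1}$, which is by construction the same isomorphism used to define $\zeta$.

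The hard part will be to verify that the composite of these two isomorphisms agrees with the map $\varphi \mapsto \zeta \circ H^0(\varphi)$ of the statement. Unwinding the definitions, the connecting map of the first step sends $\varphi\colon \cM \to \cO_{S_f}(n-2)$ to the pullback of (\ref{eqn_J_f}) along $\varphi$, and Serre duality identifies this extension class with the functional $m \mapsto \partial(H^0(\varphi)(m))$ on $M$, where $\partial\colon J_f \to H^1(\bbP^1_A, \cO_{\bbP^1_A}(-2))$ is the connecting map of (\ref{eqn_J_f}). Since $\zeta$ is by construction the composition of $\partial$ with the Čech identification, this functional is exactly $\zeta \circ H^0(\varphi)$, as required.
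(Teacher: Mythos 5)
Your proof is correct, and it takes a genuinely different route from the paper's. The paper works with the full apparatus of coherent Grothendieck duality for the composite $\pi = p \circ i \colon S_f \hookrightarrow \P^1_A \to \Spec A$: it makes $i^!$ and $p^!$ explicit, computes $\pi^! A \cong \cO_{S_f}(n-2)$, and identifies $\tr_\pi(A)$ with $\zeta$ by unwinding the compatibility $\tr_\pi = \tr_p \circ Rp_\ast(\tr_i)$. You instead invoke duality only for the smooth proper morphism $p$, in its most classical form (relative Serre duality for $\P^1$), and replace the closed-immersion half of the formalism by a direct $\Ext$ computation: your two vanishing statements --- $\Hom_{\cO_{\P^1_A}}(i_\ast\cM, \cO_{\P^1_A}(k)) = 0$ because a line bundle on the integral scheme $\P^1_A$ has no nonzero $f$-torsion, and $(\times f)_\ast = 0$ on $\Ext^1$ because $\times f$ is the zero map $i_\ast\cM \to (i_\ast\cM)(n)$ --- collapse the long exact sequence to an isomorphism $\Hom_{\cO_{S_f}}(\cM, \cO_{S_f}(n-2)) \cong \Ext^1_{\cO_{\P^1_A}}(i_\ast\cM, \cO_{\P^1_A}(-2))$. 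What your route buys is economy of input: you avoid $\varphi^!$ for non-smooth morphisms and all the trace-map compatibilities, substituting a short homological-algebra argument. What the paper's route buys is uniformity: once the duality formalism is in place, the identification $\tr_\pi(A) = \zeta$ falls out of functoriality, and the same template applies to other closed subschemes or bases without redoing the $\Ext$ argument. One small caveat on your ``hard part'': the compatibility that the Serre duality isomorphism sends an extension class $\epsilon$ to the functional $m \mapsto \tr(\delta_\epsilon(m))$ is sensitive to sign conventions in the trace map, but (as the paper itself notes in the parallel spot of its own proof) a sign cannot affect the truth of the statement, which asserts only that a certain $\zeta$-induced map is an isomorphism.
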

\begin{proof}
    We will use coherent Grothendieck duality in the form of \cite{Har66, conrad-grothendieckduality}.
    For a Noetherian scheme $X$, denote by $\mathbf{D}(X)$ the derived category of $\cO_X$-modules and $\mathbf{D}^+_c(X)$ (respectively $\mathbf{D}^-_c(X)$) the full subcategory consisting of complexes whose cohomology sheaves are coherent and vanish in sufficiently negative degrees (respectively positive degrees).
    The theory associates, to any morphism $\varphi\colon X\rightarrow Y$ of finite type schemes over $A$, a functor $\varphi^!\colon \mathbf{D}^+_c(Y)\rightarrow \mathbf{D}^+_c(X)$ satisfying various properties explained in \cite[Section 3.3]{conrad-grothendieckduality}, and when $\varphi$ is furthermore proper, a trace map $\tr_{\varphi}\colon R\varphi_* \circ \varphi^{!} \Rightarrow \Id$, which is a natural transformation of endofunctors on $\mathbf{D}^+_c(Y)$ \cite[\S3.4]{conrad-grothendieckduality}.
    Grothendieck--Serre duality then states that for all $F \in \mathbf{D}_c^-(X)$ and $G\in \mathbf{D}_c^+(Y)$, the composition 
    \begin{align}\label{eq_grothserredualitymap}
        \Hom_{\mathbf{D}(X)}(F, \varphi^! G) \xrightarrow{R\varphi_*} 
        \Hom_{\mathbf{D}(Y)}(R\varphi_* F, R\varphi_* \varphi^! G) \xrightarrow{\tr_{\varphi}}
        \Hom_{\mathbf{D}(Y)}(R \varphi_* F, G)
    \end{align}
    is an isomorphism of $A$-modules; this follows from applying $H^0\circ R\Gamma$ to \cite[Theorem 3.4.4]{conrad-grothendieckduality}.

    In our concrete situation, we have proper maps $i\colon S_f\hookrightarrow \P^1_A$, $p\colon \P^1_A \rightarrow \Spec(A)$ and $\pi\colon S_f\rightarrow \Spec(A)$ satisfying $\pi = p \circ i$.
    We will show that Grothendieck duality for the triple $(\varphi, F,G) = (\pi, \cM, A)$ implies the claim of the proposition. 
    In our arguments we will sometimes identify quasi-coherent modules on $\Spec(A)$ with $A$-modules.
    We first make the theory explicit for $i$ and $p$. 
    \begin{itemize}
        \item If $\mathcal{F}$ is a coherent sheaf on $\P^1_A$, then $Ri_* i^! \mathcal{F} = R\ShHom_{\mathbf{D}(\P^1_A)}(i_*\cO_{S_f},\mathcal{F})$ and $\tr_i\colon R\ShHom_{\mathbf{D}(\P^1_A)}(i_*\cO_{S_f},\mathcal{F})\rightarrow \mathcal{F}$ is given by precomposing with the quotient map $\cO_{\P^1_A}\rightarrow i_*\cO_{S_f}$ and applying the canonical isomorphism $R\ShHom_{\mathbf{D}(\P^1_A)}(\cO_{\P^1_A}, \mathcal{F}) \simeq \mathcal{F}$; see \cite[Lemma 3.4.3(2) and page 31]{conrad-grothendieckduality}.
        Since $S_f$ is a Cartier divisor, $i_*\cO_{S_f}$ is computed by the complex of locally free sheaves 
        \[ [ \cO_{\P^1_A}(-n) \xrightarrow{\times f} \cO_{\P^1_A} ] \]
        (with $\cO_{\P^1_A}$ placed in degree $0$), so $R\ShHom_{\mathbf{D}(\P^1_A)}(i_*\cO_{S_f},\mathcal{F})$ is represented by the complex 
        \[ [ \mathcal{F} \xrightarrow{\times f} \mathcal{F}(n) ] \]
        living in degrees $0$ and $1$, and $\tr_i$ is induced by the map of complexes 
        \[ [ \mathcal{F} \rightarrow \mathcal{F}(n) ] \rightarrow [ \mathcal{F} \to 0 ] \] which is the identity in degree $0$ and the zero map in degree $1$. 
        If $\mathcal{F}$ is furthermore locally free, the map $\mathcal{F}\xrightarrow{\times f} \mathcal{F}(n)$ is injective, so there is a quasi-isomorphism
        \[  [ \mathcal{F} \rightarrow \mathcal{F}(n) ] \overset{\sim}{\to} i_* i^*(\mathcal{F}(n))[-1]. \]
        \item If $\mathcal{F}$ is a coherent sheaf on $\Spec(A)$, then $p^!(\mathcal{F}) = \Omega^1_{\P^1_A/A}[1] \otimes_{\cO_{\P^1_A}} f^*\mathcal{F}$ by \cite[Lemma 3.4.3(3)]{conrad-grothendieckduality}.
        Moreover if $\mathcal{F} = A$ then $\tr_p\colon Rp_* p^! A \rightarrow A$ equals, when viewed as a map of $A$-modules $H^1(\P^1_A,\Omega_{\P^1_A/A}^1)\rightarrow A$, the trace isomorphism from classical Serre duality fixed in \cite[Section 2.3]{conrad-grothendieckduality}. 
    \end{itemize}
    Using these examples, the isomorphism $\Omega_{\P^1_A / A}^1 \cong \cO_{\P^1_A}(-2)$, and the natural isomorphism $\pi^! \cong i^! \circ p^!$ \cite[Equation (3.3.14)]{conrad-grothendieckduality}, we compute
    \[ \pi^! A \cong i^! p^! A  \cong \cO_{S_f}(n-2), \]
    in $\mathbf{D}^+_c(S_f)$, and
    \[ R \iota_\ast \pi^! A \cong i_*\cO_{S_f}(n-2) \cong [ \cO_{\P^1_A}(-2) \overset{\times f}{\to} \cO_{\P^1_A}(n-2) ]. \]
    in $\mathbf{D}^+_c(\P^1_A)$. We observe that the short exact sequence (\ref{eqn_J_f}) defining the map $\zeta$ expresses the filtration associated to the quasi-isomorphism $\cO_{S_f}(n-2) \cong [ \cO_{\P^1_A}(-2) \overset{\times f}{\to} \cO_{\P^1_A}(n-2) ]$, the map $\zeta$ itself corresponding to the composite of the Serre duality isomorphism $H^1(\P^1_A, \cO_{\P^1_A}(-2)) \cong A$ with the map $H^0(S_f, \cO_{S_f}(n-2)) \to H^1(\P^1_A,\cO_{\P^1_A}(-2))$ determined by the composite 
    \[ i_*\cO_{S_f}(n-2) \cong [ \cO_{\P^1_A}(-2) \overset{\times f}{\to} \cO_{\P^1_A}(n-2) ] \to \cO_{\P^1_A}(-2)[1] \]
    in $\mathbf{D}^+_c(\P^1_A)$. 
    
    We now connect this to trace. Using the examples above, we see that $\tr_i(p^!A) : R i_\ast i^! p^! A \to p^! A$ is the morphism 
    \[ [ \cO_{\P^1_A}(-2) \overset{\times f}{\to} \cO_{\P^1_A}(n-2) ] \to \cO_{\P^1_A}(-2)[1], \]
    showing that $R p_\ast(\tr_i(p^!A)) : R p_\ast R i_\ast i^! p^! A \to R p_\ast p^! A$ is the induced morphism
    \[ J_f = H^0(\P^1_A, i_\ast \cO_{S_f}(n-2)) = H^0(\P^1_A, [ \cO_{\P^1_A}(-2) \overset{\times f}{\to} \cO_{\P^1_A}(n-2) ]) \to H^1(\P^1_A, \cO_{\P^1_A}(-2)).  \]
    On the other hand, by \cite[Lemma 3.4.3(1)]{conrad-grothendieckduality}, we have $\tr_{\pi} = \tr_p \circ Rp_*(\tr_i)$, so $\tr_\pi(A) : R \pi_\ast \pi^! A \to A$ is the morphism $J_f \to A$ obtained by composing this induced morphism with the Serre duality isomorphism $\tr_p(A) : H^1(\P^1_A, \cO_{\P^1_A}(-2)) \to A$. We have shown that $\tr_\pi(A) = \zeta$. (We note that the references \cite{Har66,conrad-grothendieckduality} contain, at points, different choices, that may affect the sign of $\tr_\pi$, and therefore the truth of the statement $\tr_\pi(A) = \zeta$; however, the truth of Proposition \ref{prop_application_of_global_duality}, namely that a certain map of $A$-modules induced by $\zeta$ is an isomorphism, is insensitive to multiplying $\zeta$ by a sign, so we do not need to worry about this here.)

    We now show how the desired statement follows from (\ref{eq_grothserredualitymap}). By assumption, we have $H^1(S_f, \cM) = 0$, hence $R\pi_\ast \cM \cong M$. We have shown that there is an isomorphism $\pi^! A \cong \cO_{S_f}(n-2)$. We find that $\zeta$ induces an isomorphism
    \[ \Hom_{\mathbf{D}(S_f)}(\cM, \cO_{S_f}(n-2)) \cong \Hom_{\mathbf{D}(A)}(M, A). \]
    Since the embedding of the category of $\cO_X$-modules in $\mathbf{D}(X)$ is fully faithful, this is exactly what we needed to show. 
\end{proof}
We can use a similar point of view to construct pencils of quadrics over $A$ from suitable sheaves on $S_f$. Suppose given a coherent sheaf $\cM$ on $S_f$ satisfying the following conditions:
\begin{itemize}
    \item $\cM_\eta$ is free of rank 1 at each generic point $\eta \in S_f$. 
    \item There is an isomorphism $\cM \cong \ShHom_{\cO_{S_f}}(\cM, \cO_{S_f}(n-3))$.
    \item $H^1(S_f, \cM) = 0$.
\end{itemize}
\begin{proposition}\label{prop_pencil_of_quadrics_over_A}
    With assumptions as above, let $M = H^0(S_f, \cM)$. Then:
    \begin{enumerate}
        \item $M$ is a finite $R_f$-module, $A$-torsion-free, and $M \otimes_A K$ is free of rank 1 over $R_f \otimes_A K$.
        \item By passage to global sections, we obtain an $A$-linear map $M \times M \to I_f \to H^0(\bbP^1_A, \cO_{\bbP^1}(1))^\vee$. Fix an $A$-basis of $M$, and let $A_x, A_y$ denote the matrices of the two symmetric bilinear forms $M \times M \to A$ given by composition with $\mathrm{ev}_x$ and $\mathrm{ev}_y$, respectively. Then there exists a unit $u \in A^\times$ such that $\det( x A_y - y A_x ) = u f(x, y)$. 
    \end{enumerate}
\end{proposition}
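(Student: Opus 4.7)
The plan is to treat the two parts separately: part (1) reduces to routine module-theoretic arguments, while part (2), and in particular the determinant identity, is the main work and will be handled first over $K$ and then upgraded to $A$.

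For part (1), the module $M$ is finite over $R_f$ because $\cM$ is coherent on the proper $A$-scheme $S_f$. For $A$-torsion-freeness, the self-duality $\cM \cong \ShHom_{\cO_{S_f}}(\cM, \cO_{S_f}(n-3))$ is the crucial input: since $\cO_{S_f}(n-3)$ is a line bundle on the $A$-flat scheme $S_f$, it is $A$-torsion-free, so any $a$-torsion local section of $\ShHom(\cM, \cO_{S_f}(n-3))$ must vanish; transferring through the isomorphism and passing to global sections then shows $M$ is $A$-torsion-free. For the rank-$1$ claim, I would use flat base change along $A \to K$ to identify $M \otimes_A K \cong H^0(S_f \otimes_A K, \cM \otimes_A K)$; since $\disc(f) \neq 0$, the generic fiber $S_f \otimes_A K$ is finite étale over $K$ and $R_f \otimes_A K = \prod_i K_i$ is a product of separable extensions indexed by the generic points of $S_f$ lying over $\Spec K$, and the rank-$1$ hypothesis at these points then yields the claim.

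For part (2), I first choose the self-duality isomorphism to be symmetric (possible in characteristic zero by symmetrization), which yields a symmetric pairing $\cM \otimes \cM \to \cO_{S_f}(n-3)$. Passing to global sections and composing with $\mathrm{ev}_x, \mathrm{ev}_y$ gives the symmetric bilinear forms with matrices $A_x, A_y$. By Proposition \ref{prop_application_of_global_duality}, the matrix $A_\ell := \alpha A_x + \beta A_y$ for $\ell = \alpha x + \beta y$ represents the map $\phi_\ell\colon M \to M^\vee$ arising from $\cM \xrightarrow{\sim} \ShHom(\cM, \cO_{S_f}(n-3)) \xrightarrow{\times \ell} \ShHom(\cM, \cO_{S_f}(n-2))$. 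To prove the identity over $K$, I would fix a generator $\xi$ of $M_K$ as a rank-$1$ $R_K$-module and write $\langle a\xi, b\xi \rangle = ab \lambda$ for some $\lambda \in I_K$; using the $R_K$-linear isomorphism $J_K \cong R_K^\vee$ implicit in the proof of Proposition \ref{prop_application_of_global_duality}, $A_\ell$ in a basis of the form $\{r_i \xi\}$ represents the composition $R_K \xrightarrow{\cdot(\ell\lambda)} J_K \cong R_K^\vee$, whose determinant factors as a nonzero $K$-scalar times $N_{R_K/K}(\ell\lambda/\nu_0)$ for a chosen trivialization $\nu_0$ of $J_K$. Expanding this norm as a product over the geometric roots $p_i = (x_i : y_i)$ of $f$ and using the factorization $f(x,y) = c\prod_i(y_i x - x_i y)$, a linear change of pencil variables identifies the result with a $K^\times$-scalar multiple of $f$, giving $\det(xA_y - yA_x) = u_K f(x, y)$ in $K[x, y]$ with $u_K \in K^\times$.

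The main obstacle will be upgrading $u_K \in K^\times$ to $u \in A^\times$. My plan is to interpret $\Phi := xA_y - yA_x$ as a map of locally free rank-$n$ sheaves $M \otimes_A \cO_{\P^1_A}(-1) \to M^\vee \otimes_A \cO_{\P^1_A}$ on $\P^1_A$, so that $\det \Phi \in H^0(\P^1_A, \cO(n))$ is the polynomial $uf$. The natural $\cO_{\P^1_A}$-linear map $M \otimes_A \cO_{\P^1_A} \to i_\ast \cM$, sending $m \otimes g \mapsto g|_{S_f} \cdot m$, should factor through $\coker \Phi$ and induce an isomorphism (possibly up to a line-bundle twist) $\coker \Phi \cong i_\ast \cM$. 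Once this identification is in hand, comparison of zeroth Fitting ideals --- $\mathrm{Fitt}_0(\coker \Phi) = (\det \Phi)$, while $\mathrm{Fitt}_0(i_\ast \cM) = \cO_{\P^1_A}(-S_f) = (f)$ by the generic rank-$1$ condition on $\cM$ --- forces $\det \Phi$ and $f$ to generate the same ideal sheaf in $\cO_{\P^1_A}$, whence their ratio $u$ lies in $H^0(\P^1_A, \cO_{\P^1_A}^\times) = A^\times$. The delicate part is verifying the cokernel identification precisely, particularly at primes of $A$ where $f$ has non-trivial content and the geometry of $S_f \to \Spec A$ degenerates, and ensuring that the Fitting-ideal equality for $i_\ast \cM$ holds in that generality.
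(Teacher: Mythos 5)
For part (1), the argument for $A$-torsion-freeness rests on the claim that $S_f$ is $A$-flat, and this is false whenever $f$ fails to be primitive. If $A$ is a DVR with uniformizer $\pi$ and $f = \pi g$ with $g$ primitive, then the image of $g$ in $\cO_{S_f}$ is a nonzero section annihilated by $\pi$, so $\cO_{S_f}$ (and hence $\cO_{S_f}(n-3)$) has $A$-torsion. Worse, the sheaf $\cM$ itself typically \emph{does} have $A$-torsion local sections: at a generic point $\eta$ of $S_f$ lying over the closed point of $\Spec A$, the hypothesis on $\cM$ gives $\cM_\eta \cong \cO_{S_f,\eta}$, which is annihilated by a power of $\pi$. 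So the statement to be proved is the subtler one that the module of \emph{global} sections $M$ has no $A$-torsion, and your argument does not distinguish global sections from local ones. The paper proves this by using the exact sequence (\ref{eqn_short_exact_sequence}) together with Proposition \ref{prop_application_of_global_duality} to embed $M$ into $\Hom_A(M,A)$, which is visibly torsion-free; this is precisely where coherent duality is indispensable. (Your argument for the rank-1 statement, via flat base change to $K$ and finite \'etaleness of $S_{f,K}$, agrees with the paper's.)

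For part (2), the norm computation over $K$ is a legitimate alternative to the paper's route of showing that each geometric linear factor of $f$ degenerates the pairing; both yield $\det(xA_y - yA_x) = \lambda f$ with $\lambda \in K^\times$. The proposed Fitting-ideal argument for upgrading $\lambda$ to a unit of $A$ is genuinely different from the paper's proof, which localises at a prime, splits into the primitive and imprimitive cases, and directly compares the $\pi$-adic valuations of $\det(A_a)$ and $f(s,-r)$ for a well-chosen linear form $a = rx + sy$. Your route could conceivably be cleaner, but --- as you yourself flag --- the two key steps are not carried out: the identification of $\coker\Phi$ with $i_\ast\cM$ (up to twist) is asserted rather than proved, and the equality $\mathrm{Fitt}_0(i_\ast\cM) = (f)$ is also nontrivial. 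The latter does not follow from the generic rank-1 hypothesis alone: one needs, in addition, that $i_\ast\cM$ has no embedded associated primes as an $\cO_{\P^1_A}$-module, which in the paper's framework follows from reflexivity of $\cM$ and the $S_2$ property of \cite[Theorem 1.9]{Har92}, combined with a localisation at each height-one prime dividing $f$. As it stands, the proposal identifies a plausible alternative framework but leaves the essential verifications as plans, precisely at the imprimitive primes where the paper's own argument does the hard work.
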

\begin{proof}
    We take each point in turn. Since $S_f$ is projective, $M$ is a finite $R_f$-module. Since $S_{f, K}$ is a finite \'etale $K$-scheme, and $\cM_K$ is assumed locally free of rank 1, $M \otimes_A K$ is free of rank 1 over $R_f \otimes_A K$. We must check that $M$ is $A$-torsion-free. We can check this locally on $A$, so can assume that $A$ is a DVR with uniformizer $\pi$. We are also free to make an \'etale localisation on $A$, so can assume that there is a primitive homogeneous linear polynomial $a(x, y) = x - s y \in A[ x, y]$ such that $f(s, 1) \neq 0$. Consider the short exact sequence of sheaves on $\bbP^1_A$
    \[ 0 \to \cO_{\bbP^1_A}(n-3) \overset{\times a}{\to} \cO_{\bbP^1_A}(n-2) \to \cO_a \to 0, \]
    where $\cO_a$ (viewed as a sheaf on $\P^1_A$) is the structure sheaf of the closed subscheme $V(a) \subset \P^1_A$, isomorphic to $\Spec A$. This short exact sequence remains exact after pullback along $i : S_f \to \P^1_A$ (because $A[x] / a(x, 1)$ has no $f(x, 1)$-torsion). After making this pullback and applying the functor $\Hom_{\cO_{S_f}}(\cM, -)$, we get an exact sequence
    \begin{equation}\label{eqn_short_exact_sequence} 0 \to \Hom_{\cO_{S_f}}(\cM, \cO_{S_f}(n-3)) \to \Hom_{\cO_{S_f}}(\cM, \cO_{S_f}(n-2)) \to \Hom_{\cO_{S_f}}(\cM, i^\ast \cO_a). 
    \end{equation}
    Applying
    our hypotheses on $\cM$ and Proposition \ref{prop_application_of_global_duality}, we see in particular that we have an embedding $M \to \Hom_A(M, A)$. The module $\Hom_A(M, A)$ is certainly $A$-torsion-free, so this shows that $M$ is itself $A$-torsion-free. 

    To prove the second part, we proceed in stages. Let $h(x, y) = \det( x A_y - y A_x ) \in A[x, y]$. We will first show that $h(x, y)$ and $f(x, y)$ cut out the same locus in $\bbP^1_K$, using a calculation over the algebraic closure. This implies that $f(x, y)$ and $h(x, y)$ are equal up to multiple by some element of $K^\times$. To show equality, we will then be free to localise on $A$ and assume that $A$ is a DVR of uniformizer $\pi$. We will then show the existence of a pair $(r, s ) \in A^2$ such that $h(r, s)$ and $f(r, s)$ have the same (finite) $\pi$-adic valuation, forcing $f(x, y)$ and $h(x, y)$ to be equal up to multiplication by units. 

    It follows from Lemma \ref{lemma_identificationconnectingmaps} that if $a(x, y) = r x + s y$, we write $\varphi : M \times M \to I_f$ for the $R_f$-bilinear pairing, and $A_a$ for the matrix of the symmetric $A$-bilinear form $\mathrm{ev}_a \circ \varphi : M \times M \to A$, then $A_a = r A_x + s A_y$, and hence $\det(A_a) = \det(s A_y + r A_x) = h(s, -r)$. 

    We now show that $f(x, y)$ divides $h(x, y)$. This can be checked after base extension to the algebraic closure of $K$. Suppose that $a(x, y)$ a linear form dividing $f(x, y)$. Then multiplication by $a$ sends $I_f$ to a codimension 1 subspace of $J_f$. Using Lemma \ref{lemma_identificationconnectingmaps}, we see that $\mathrm{ev}_a \circ \varphi = \zeta \circ \beta_a \circ \varphi$ does not have full rank, showing that $\det(A_a) = 0$ and hence $a(x, y)$ divides $h(x, y)$. Since $f(x, y)$ has non-zero discriminant and $a$ was an arbitrary linear factor, this shows that $f$ divides $h$; since they have the same degree, they are in fact equal up to an element of $K^{\times}$.

    Now we return to the case of general $A$, and must show that $f, h \in A[x, y]$ are in fact equal up to multiplication by elements of $A^\times$. After localisation, we can assume that $A$ is a DVR of uniformizer $\pi$ and with infinite residue field $k$. Let us write $f(x, y) = \pi^m g(x, y)$ where $m \geq 0$ and $g(x, y)$ is primitive. We first treat the case where $m = 0$, i.e.\ $f(x, y)$ is itself primitive. In this case,  we can choose a primitive linear form $a(x, y) = r x + s y \in A[x, y]$ such that $\overline{a}$ does not divide $\overline{f}$ in $k[x, y]$ (where overline denotes reduction modulo $\pi$); equivalently, $f(s, -r) \in A^\times$. Forming the exact sequence (\ref{eqn_short_exact_sequence}), we get a short exact sequence
    \[ 0 \to M \to \Hom_A(M, A) \to \Hom_{\cO_{S_f}}(\cM, i^\ast \cO_a)  \]
    The group $\Hom_{\cO_{S_f}}(\cM, i^\ast \cO_a)$ is in fact $0$, because our choice of $a$ means that $i^\ast \cO_a = 0$ (in other words, the section $a \in H^0(\bbP^1_A, \cO_{\bbP^1_A}(1))$ is non-vanishing on $S_f$). In particular, $h(s, -r) \in A^\times$ and we're done in this case. 

    We now suppose that $m > 0$. In this case, $S_f$ contains $\bbP^1_k$ as a closed subscheme and the sheaf $i^\ast \cO_a$ is non-zero. Since we assume that $\cM_\eta$ is free for each generic point $\eta$ of $S_f$, there exists an open subscheme $U$ of $S_f$, supported over $\Spec k$, and over which $\cM$ is free of rank 1. Let us choose $a$ so that $g(s, -r) \in A^\times$ and such that the support of $i^\ast \cO_a$ is contained in $U$. In this case, we again form the exact sequence (\ref{eqn_short_exact_sequence}), to obtain 
    \[ 0 \to M \to \Hom_A(M, A) \to \Hom_{\cO_{S_f}}(\cM, i^\ast \cO_a) \to 0.  \]
    To justify the exactness on the right, it is enough to show that $\Ext^1_{\cO_{S_f}}(\cM, \cO_{S_f}(n-3)) = 0$. The sheaf $\mathcal{E}\mathit{xt}^1_{\cO_{S_f}}(\cM, \cO_{S_f}(n-3))$ vanishes, by \cite[Proposition 1.6]{Har92} (note that $\cM$ is reflexive because there is an isomorphism $\cM \cong \ShHom_{\cO_{S_f}}(\cM, \cO_{S_f}(n-3))$, so we can apply \cite[Corollary 1.8]{Har92}).
    Therefore
    \[\Ext^1_{\cO_{S_f}}(\cM, \cO_{S_f}(n-3)) \cong H^1(S_f, \ShHom_{\cO_{S_f}}(\cM, \cO_{S_f}(n-3))) \cong H^1(S_f, \cM), \]
    and this group vanishes, by assumption. 
    
    By construction, $\Hom_{\cO_{S_f}}(\cM, i^\ast \cO_a)$ is isomorphic as $A$-module to the stalk of $i^\ast \cO_a$ at the point in $\bbP^1_k$ where $\overline{a}$ vanishes; in other words, to $A / (\pi^m)$. 
    By Lemma \ref{lemma_identificationconnectingmaps}, the morphism $M \to \Hom_A(M, A)$ in the above exact sequence is the one arising from $\mathrm{ev}_a \circ \varphi$, so we have shown that $\det(A_a) \in \pi^m A^\times$. Since $f(s, -r) \in \pi^m A^\times$, by construction, this completes the proof. 
\end{proof}

\section{The action of $\SL_n$ on pairs of symmetric matrices}\label{sec_the_representation}

\subsection{Basic definitions}\label{subsec_basicdefs}

Let $n\geq 2$ be an integer and let $W$ be the free $\Z$-module of rank $n$ with basis $e_1,\dots,e_n$.
Let $\Sym_2(W^{\vee})$ be the free $\Z$-module of symmetric bilinear forms $( -,-)\colon  W\times W\rightarrow \Z$.
Given such a form $(-,-)$, its Gram matrix with respect to the basis $\{e_i\}$ is the matrix $A = ((e_i,e_j))_{1\leq i,j\leq n}$. 
We use this to view $\Sym_2(W^{\vee})$ as the $\Z$-module of symmetric matrices $A\in \Sym_2(\Z^n)\subset\Mat_n(\Z)$.
Conversely, given a symmetric matrix $A\in \Sym_2(\Z^n)$, write $(-,-)_A$ for the corresponding symmetric bilinear form on $W$.

The group $\GL(W)$ and its subgroup $\SL(W)$ act on $W$ and hence by functoriality on $\Sym_2(W^{\vee}) = \Sym_2(\Z^n)$.
This action has the property that $(v,w)_{g\cdot  A} = (g^{-1} v,g^{-1}w)_A$ for all $v,w\in V$, $A\in \Sym_2(W^{\vee})$ and $g\in \GL(W)$. In terms of matrices, we have the formula $g\cdot A = g^{-t} A g^{-1}$.
The group $\SL(W)$ acts on $V$ via $g\cdot (A,B) = (g^{-t} Ag^{-1}, g^{-t} B g^{-1})$. 
The same formulae define actions of the group $\SL_n(R)$ on the $R$-modules $\Sym_2((W\otimes_{\Z} R)^{\vee}) = \Sym_2(R)$ and $V(R) = \Sym_2((W\otimes_{\Z} R)^{\vee}) \oplus \Sym_2((W\otimes_{\Z} R)^{\vee})$, for any ring $R$.
(Our formula for the $\SL(W)$-action on $V$ differs from \cite{bhargava2015mosthyperellipticarepointless, BGW17}, where they consider the action defined by $g\cdot (A,B) = (gAg^t, gBg^t)$, which coincides with our action after replacing $g$ by its inverse transpose. 
Since the orbits over any ring are the same for both actions, this difference is harmless.)

If $R$ is a ring and $A\in \Sym_2(R^n)$, its discriminant is by definition $\disc(A) = (-1)^{n(n-1)/2}\det(A)$.
We define the invariant binary form of $(A,B)\in V(R)$ to be
\begin{align*}
    f_{A,B}(x,y) = \disc(Ax-By)  = f_0 x^n + f_1 x^{n-1}y + \cdots + f_ny^n \in R[x,y].
\end{align*}
The coefficients of the invariant binary form are invariant polynomials of degree $n$ in the coefficients of $(A,B)$ for the $\SL_n(R)$-action, and freely generate the full ring of invariants when $R = \Z$.
If $f(x,y)\in R[x,y]$ is a binary form of degree $n$, write $V_f(R)$ for the $\SL_n(R)$-stable subset of $(A,B) \in V(R)$ satisfying $f_{A,B} = f$.

The discriminant $\Delta(f)$ of a binary form $f = f_0x^n + \cdots +f_ny^n$ is a homogeneous polynomial of degree $2n-2$ in $f_0, \dots,f_n$.
It is uniquely characterized by the property that if $R=K$ is an algebraically closed field and $f = \prod_{i=1}^n(\alpha_ix-\beta_i y)$ for some $\alpha_i, \beta_i \in K$, then 
\begin{align*}
    \Delta(f) = \prod_{i< j } (\alpha_i\beta_j - \alpha_j \beta_i)^2.
\end{align*}
If $(A,B)\in V(R)$ we set $\Delta(A,B) = \Delta(f_{A,B})$.

\subsection{Description of rational orbits}\label{subsec_rational_orbits}

Let $K$ be a field of characteristic zero and $f(x,y) = f_0 x^n + \cdots + f_n y^n \in K[x,y]$ a binary form of nonzero discriminant. Let $S_f \subset \bbP^1_K$ be the zero locus of $f$, $L_f = H^0(S_f, \cO_{S_f})$, an \'etale $K$-algebra of rank $n$, and define $I_f = H^0(S_f, \cO_{S_f}(n-3))$, $J_f = H^0(S_f, \cO_{S_f}(n-2))$, as in \S \ref{sec_binary_forms}. Then $I_f, J_f$ are free $L_f$-modules of rank 1, although they do not have distinguished generators. We write $I_f^\times \subset I_f$ for the set of elements which generate $I_f$ as an $L_f$-module; then $I_f^\times$ receives an action of $L_f^\times$. We define $J_f^\times$ similarly. 
\begin{definition}
    We call an $f$-module a tuple $(M, \varphi, e)$, where $M$ is a free $L_f$-module of rank 1, $\varphi : M \otimes_{L_f} M \to I_f$ is an isomorphism of $L_f$-modules, and $e \in \wedge^n_K M$ is a non-zero element, satisfying the following property: let $\mathcal{B} = \{ b_1, \dots, b_n \}$ be any $K$-basis of $L_f$ such that $b_1 \wedge \dots \wedge b_n = e$, and let $A_x, A_y$ be the matrices of the symmetric $K$-bilinear forms $\mathrm{ev}_x \circ \varphi$, $\mathrm{ev}_y \circ \varphi$ on $M$. Then $\disc(x A_y - y A_x) = f(x, y)$. 
\end{definition}
Henceforth we will write $\disc_e$ to denote discriminant with respect to some basis $\mathcal{B} = \{ b_1, \dots, b_n \}$ such that $b_1 \wedge \dots \wedge b_n = e$. Thus we can express the condition defining an $f$-module in the form $\disc_e( x \mathrm{ev}_y \circ \varphi - y \mathrm{ev}_x \circ \varphi) = f(x, y)$.

An isomorphism $(M, \varphi, e) \to (M', \varphi', e')$ of $f$-modules is an $L_f$-linear isomorphism $M \to M'$ intertwining the other data. We call $(A_y, A_x)$ the pair associated to the $f$-module $M$ and basis $\mathcal{B}$. 
\begin{proposition}\label{prop_bijectionorbitsfmodules}
    The map $(M, \varphi, e) \mapsto (A_y, A_x)$ determines a bijection between the following two sets:
    \begin{enumerate}
        \item The set of isomorphism classes of $f$-modules.
        \item The set of orbits $\SL_n(K) \backslash V_f(K)$.
    \end{enumerate}
\end{proposition}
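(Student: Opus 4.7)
The plan is to verify the forward map descends to isomorphism classes of $f$-modules, construct an inverse using the sheaf-theoretic framework of Section \ref{sec_binary_forms}, and check the two assignments are mutually inverse.

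\emph{Forward map.} For an $f$-module $(M, \varphi, e)$, the pair $(A_y, A_x)$ lies in $V_f(K)$ by the discriminant identity built into the definition. Two $K$-bases of $M$ with top exterior $e$ differ by an element of $\SL_n(K)$, and the matrices of $\mathrm{ev}_x \circ \varphi$ and $\mathrm{ev}_y \circ \varphi$ transform by the prescribed representation, yielding the same $\SL_n(K)$-orbit. Isomorphisms of $f$-modules intertwine bases and bilinear forms, so the orbit depends only on the isomorphism class.

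\emph{Inverse map.} Given $(A,B) \in V_f(K)$, consider the resolution of sheaves on $\P^1_K$
\[0 \to \cO_{\P^1_K}(-2)^{\oplus n} \xrightarrow{Ax - By} \cO_{\P^1_K}(-1)^{\oplus n} \to \mathcal{M} \to 0,\]
whose determinant is $\pm f(x,y)$, of nonzero discriminant; thus $\mathcal{M}$ is supported on $S_f$ and, by separability of $f$, locally free of rank one over $\cO_{S_f}$. The three hypotheses of Proposition \ref{prop_pencil_of_quadrics_over_A} then hold: rank one at generic points is immediate; $H^1(S_f, \mathcal{M}) = 0$ follows from the cohomology long exact sequence together with $H^1(\P^1_K, \cO(-1)) = 0$; and the self-duality $\mathcal{M} \cong \ShHom_{\cO_{S_f}}(\mathcal{M}, \cO_{S_f}(n-3))$ follows from $A = A^t$, $B = B^t$ by applying $\ShHom(-, \cO(-1))$ to the resolution and invoking Grothendieck duality for the Cartier divisor $S_f \subset \P^1_K$, in the style of the proof of Proposition \ref{prop_application_of_global_duality}. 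Setting $M := H^0(S_f, \mathcal{M})$, the long exact sequence identifies $M$ with $H^1(\P^1_K, \cO(-2))^{\oplus n}$, canonically isomorphic to $K^n$ via the Serre duality trace; this distinguishes a volume form $e \in \wedge^n_K M$, and Proposition \ref{prop_application_of_global_duality} applied to the self-duality of $\mathcal{M}$ produces the pairing $\varphi$.

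\emph{Mutual inverse and main obstacle.} For the ``inverse then forward'' composition, a diagram chase through the Cech description of $\zeta$ in Section \ref{sec_binary_forms} and the Serre duality trace on $\P^1_K$ should identify the pair associated to the constructed $f$-module as $(A,B)$ exactly. For the ``forward then inverse'' composition, given an $f$-module $(M, \varphi, e)$ and its associated pair, Proposition \ref{prop_application_of_global_duality} identifies the reconstructed sheaf with the quasi-coherent sheaf on $S_f$ determined by the rank-one $L_f$-module $M$, recovering $(M, \varphi, e)$ up to isomorphism. The main obstacle will be the careful tracking of signs and Serre duality normalizations needed to upgrade the conclusion of Proposition \ref{prop_pencil_of_quadrics_over_A}, namely $\disc(xA_y - yA_x) = u f(x,y)$ for some $u \in K^\times$, to $u = 1$ on the nose; this is precisely the extra content encoded by the on-the-nose discriminant condition in the definition of an $f$-module, and without this normalization the construction would only produce an $uf$-module for some $u$ (harmless at the level of $\SL_n(K)$-orbits modulo scaling, but present in the statement here).
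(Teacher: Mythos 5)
Your forward map is fine and matches the paper's treatment. For the inverse, you take a genuinely different route: you produce the sheaf $\cM$ on $S_f$ as the cokernel of the determinantal map $Ax-By$ and then feed it into Proposition \ref{prop_pencil_of_quadrics_over_A}, whereas the paper builds the $L_f$-module structure on $K^n$ directly (the element $x/a$ acting by $A_a^{-1}B$, etc.), defines $\varphi$ through Proposition \ref{prop_application_of_global_duality} and multiplication by a chosen linear form $a$, and then verifies by explicit matrix computations that $\mathrm{ev}_y\circ\varphi$, $\mathrm{ev}_x\circ\varphi$ literally reproduce $A,B$ with respect to the standard basis. The determinantal-cokernel picture is more conceptual and indeed well-known in the pencils-of-quadrics literature, but the paper's explicit version pays for itself precisely at the point your argument stalls.

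The issue you label as the ``main obstacle'' is a real gap, not a matter of bookkeeping, and it is slightly mischaracterized. Invoking Proposition \ref{prop_pencil_of_quadrics_over_A} as a black box only delivers $\disc_{\cal B}(xA_y-yA_x)=uf$ for some $u\in K^\times$ depending on the chosen $A$-basis of $M$, and rescaling the volume form $e$ changes $u$ only by squares; so to even produce an $f$-module you must argue that $u$ lies in $(K^\times)^2$, which is not automatic from the proposition alone. Moreover, the parenthetical remark that the normalization is ``harmless at the level of $\SL_n(K)$-orbits modulo scaling'' is beside the point: the statement is a bijection onto $\SL_n(K)\backslash V_f(K)$, not onto orbits up to scaling, so if $u\notin(K^\times)^2$ you would land in $V_{uf}$, which is not $\SL_n(K)$-conjugate to $V_f$. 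The way out, which you gesture at but do not carry through, is to bypass Proposition \ref{prop_pencil_of_quadrics_over_A} entirely: trace the Serre-duality identification $M\cong H^1(\P^1_K,\cO(-2))^{\oplus n}\cong K^n$ and the self-duality pairing through the C\v{e}ch description of $\zeta$, and show that the resulting matrices are literally $(A,B)$. This identification is what simultaneously proves the discriminant is exactly $f$ (since $(A,B)\in V_f$) and that the two constructions are mutually inverse; it is precisely the computation the paper performs by hand, and it is not shorter in the sheaf-theoretic formulation. Until that diagram chase is written down, the inverse map is not known to be well-defined as a map into isomorphism classes of $f$-modules.
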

\begin{proof}
    It is clear that the map $(1) \to (2)$ is well-defined. To show that it is bijective, we construct an inverse. Consider a pair $(A, B) \in V_f(K)$. We construct a structure $(M, \varphi, e)$ of $f$-module on $M = K^n$, together with basis $\mathcal{B} = \{ b_1, \dots, b_n \}$ with $b_1 \wedge \dots \wedge b_n = e$, such that $(A, B)$ is the pair associated to $(M, \varphi, e)$ and choice of basis $\mathcal{B}$. 

    Let $(s, -r) \in K^2$ be such that $s f(s, -r) \neq 0$, and let $a(x, y) = r x + s y$.  We set $A_a = s A + r B$. Then $\det A_a \neq 0$, and we give $M$ the unique $L_f$-module structure for which the element $x / a \in L_f$ (more precisely, the image of $x / a \in H^0(D^+(a), \cO_{\bbP^1_K})$ in $H^0(S_f, \cO_{S_f}) = L_f$) acts via the matrix $A_a^{-1} B$. We can then check the following:
    \begin{itemize}
     \item The element $y / a$ acts on $M$ via the matrix $A_a^{-1} A$. (Use the relation $r (x / a) + s (y / a) = 1$ in $L_f$.)
        \item The induced $L_f$-module structure on $M$ is independent of the choice of $a$. (If $b$ is another choice of linear form, use the relation $x / b = (b / a)^{-1}(x / a)$ in $L_f$ and check that the two possible actions of $x / b$ agree.)
    \end{itemize}
    We next define the pairing $\varphi$. Equivalently, we must define an isomorphism $M \cong \Hom_{L_f}(M, I_f)$ of $L_f$-modules. By duality (Proposition \ref{prop_application_of_global_duality}), there is a canonical isomorphism $\Hom_{L_f}(M, J_f) \cong \Hom_K(M, K)$, given by composition with $\zeta$. Multiplication by $a$ defines an isomorphism $\times a : I_f \to J_f$. The pairing given by the symmetric matrix $A_a$ gives an $L_f$-linear map $\psi_a : M \to \Hom_K(M, K) \cong \Hom_{L_f}(M, J_f)$. We define $\varphi = (\times a)^{-1} \circ \psi_a$. We can then check the following:
    \begin{itemize}
        \item The map $\varphi$ is an isomorphism of $L_f$-modules, which is independent of the choice of $a$. (If $b$ is another choice of linear form, we must check that $(b / a) \psi_a = \psi_b$. This is equivalent to the assertion that $A_a (b / a) = A_b$ as $K$-linear forms $M \times M \to K$, which follows from the formula for the action of $b / a$ on $M = K^n$.)
        \item The matrices of $A_y = \mathrm{ev}_y \circ \varphi$ and $A_x = \mathrm{ev}_x \circ \varphi$ with respect to the standard basis $\mathcal{B} = \{ b_1, \dots, b_n \}$ of $K^n$ are equal to $A$ and $B$, respectively. (This is essentially the same as the previous point, replacing $b$ by $y$ and using Lemma \ref{lemma_identificationconnectingmaps}.)
    \end{itemize}
    To complete our construction, we can therefore take $e = b_1 \wedge \dots \wedge b_n$. It is clear that this gives a well-defined map $(2) \to (1)$ (i.e.\ well-defined at the level of equivalence classes), which is inverse to the map $(1) \to (2)$. This completes the proof. 
\end{proof}
If $(M, \varphi, e)$ is an $f$-module, and $u \in M$ is an $L_f$-module generator, then $\varphi(u \otimes u) \in I_f^\times$. Multiplication by $u^{-1}$ determines an isomorphism $M \to L_f$ of $L_f$-modules, so we get an element $u^{-1}_\ast(e) \in \wedge^n_K L_f$. This gives a well-defined map from the set of isomorphism classes of $f$-modules to the set of equivalence classes in $I_f^\times \times (\wedge^n_K L_f - \{ 0 \})$, where we define pairs $(\alpha, z)$ and $(\alpha', z')$ to be equivalent if there exists $\beta \in L_f^\times$ such that $(\alpha', z') = (\beta^2 \alpha, \mathbf{N}_{L_f / K}(\beta)^{-1} z)$. 
\begin{proposition}\label{prop_base_point_free_description_of_orbits} The map just defined has the following properties:
    \begin{enumerate}
        \item It is injective.
        \item Let $(\alpha, z) \in I_f^\times \times (\wedge^n_K L_f - \{ 0 \})$. Then the equivalence class of $(\alpha, z)$ lies in the image if and only if $\disc_z( x (\mathrm{ev}_y \circ \varphi_\alpha) - y (\mathrm{ev}_x \circ \varphi_\alpha) ) = f(x, y)$, where $\varphi_\alpha : L_f \otimes_{L_f} L_f \to I_f$ is defined by $\varphi_\alpha(a \otimes b) = a b \alpha$.
    \end{enumerate}
\end{proposition}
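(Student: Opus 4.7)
The plan is to verify that the recipe $(M,\varphi,e) \mapsto (\alpha,z)$ descends to isomorphism classes, establish injectivity by reconstructing an isomorphism from an equivalence of pairs, and finally identify the image by using $(L_f,\varphi_\alpha,z)$ as a canonical $f$-module representative of any pair satisfying the discriminant identity.

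First I would check that the map is well-defined. Given $(M,\varphi,e)$ and two $L_f$-generators $u, u' = \gamma u$ with $\gamma \in L_f^\times$, the corresponding pairs are $(\alpha,z) = (\varphi(u \otimes u),\, u^{-1}_\ast(e))$ and $(\alpha', z') = (\gamma^2 \alpha,\, \mathbf{N}_{L_f/K}(\gamma)^{-1} z)$, because multiplication by $\gamma^{-1}$ on $L_f$ acts by $\mathbf{N}_{L_f/K}(\gamma)^{-1}$ on $\wedge^n_K L_f$. This is exactly the equivalence relation in the statement (with $\beta = \gamma$). An isomorphism $(M,\varphi,e) \to (M',\varphi',e')$ of $f$-modules transports any generator $u$ to a generator $u'$ of $M'$ and identifies the invariants attached to $u, u'$, so the assignment is well-defined on isomorphism classes.

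For injectivity, suppose $(M,\varphi,e)$ and $(M',\varphi',e')$ yield equivalent pairs: choose generators $u \in M$, $u' \in M'$ and $\beta \in L_f^\times$ with $\alpha' = \beta^2 \alpha$ and $z' = \mathbf{N}_{L_f/K}(\beta)^{-1} z$. Define the $L_f$-linear isomorphism $\phi\colon M \to M'$ by $\phi(u) = \beta^{-1} u'$. Then
\[ \varphi'(\phi(u) \otimes \phi(u)) = \beta^{-2} \varphi'(u' \otimes u') = \beta^{-2}\alpha' = \alpha = \varphi(u \otimes u), \]
so $\varphi' \circ (\phi \otimes \phi) = \varphi$ by $L_f$-bilinearity. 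Moreover the composite $(u')^{-1} \circ \phi\colon M \to L_f$ coincides with $\beta^{-1} \cdot u^{-1}$, and its action on top exterior powers scales by $\mathbf{N}_{L_f/K}(\beta^{-1}) = \mathbf{N}_{L_f/K}(\beta)^{-1}$, so $(u')^{-1}_\ast (\phi_\ast(e)) = \mathbf{N}_{L_f/K}(\beta)^{-1} z = z' = (u')^{-1}_\ast(e')$, giving $\phi_\ast(e) = e'$. Hence $\phi$ is an isomorphism of $f$-modules.

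For the image characterization, given $(\alpha,z) \in I_f^\times \times (\wedge^n_K L_f - \{0\})$ satisfying $\disc_z(x\,\mathrm{ev}_y \circ \varphi_\alpha - y\,\mathrm{ev}_x \circ \varphi_\alpha) = f(x,y)$, set $(M,\varphi,e) = (L_f,\varphi_\alpha,z)$ and observe that the required identity is precisely the $f$-module condition; taking $u = 1 \in L_f$ then recovers $(\alpha,z)$, so this class lies in the image. Conversely, any pair arising from $(M,\varphi,e)$ with chosen generator $u$ is pulled back along the $L_f$-module isomorphism $u^{-1}\colon M \xrightarrow{\sim} L_f$ to the triple $(L_f,\varphi_\alpha,z)$ (here one checks $\varphi(au \otimes bu) = ab\,\alpha = \varphi_\alpha(a \otimes b)$), and the $f$-module condition applied to any basis of $L_f$ with top wedge $z$ produces exactly the stated discriminant identity. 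The only really delicate point in the whole argument is tracking the sign/inverse conventions for $\beta$ and the norm on top exterior powers in the injectivity step; once that bookkeeping is fixed, the rest is essentially a transport of structure.
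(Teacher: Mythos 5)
Your proof is correct and follows essentially the same strategy as the paper's: construct an $L_f$-module isomorphism from equivalent pairs for injectivity, and note that the image characterization is just the definition of an $f$-module applied to the canonical representative $(L_f, \varphi_\alpha, z)$. The only difference is presentational — the paper absorbs the unit $\beta$ into the choice of generator $u'$ before building the isomorphism, whereas you keep $\beta$ explicit in the map $\phi(u) = \beta^{-1}u'$ and verify the resulting bookkeeping; both versions amount to the same transport-of-structure argument.
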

\begin{proof}
    Suppose that $f$-modules $(M, \varphi, e)$ and $(M', \varphi', e')$ have the same image. It follows that we can choose basis elements $u, u'$ for $M, M'$ respectively such that $\varphi(u \otimes u) = \varphi'(u' \otimes u')$ and $u^{-1}_\ast(e) = (u')^{-1}_\ast(e')$. Then the isomorphism $M \to M'$ of $L_f$-modules sending $u$ to $u'$ intertwines the other structures, so is an isomorphism of $f$-modules.

    The characterisation of the image is essentially the definition of an $f$-module. 
\end{proof}
We remark that if $M$ is a free $L_f$-module of rank 1, $\varphi : M \otimes_{L_f} M \to I_f$ is an isomorphism, and $e \in \wedge^n_K M$ is non-zero, then $\disc_e( x (\mathrm{ev}_y \circ \varphi) - y (\mathrm{ev}_x \circ \varphi) )$ is a $K^\times$-multiple of $f(x, y)$. To check equality, it is therefore enough to check that 
\[ \disc_e( s (\mathrm{ev}_y \circ \varphi) + r (\mathrm{ev}_x \circ \varphi) ) = \disc_e( \mathrm{ev}_{r x + s y} \circ \varphi) = f(s, -r) \]
for some $(r, s) \in K^2$ such that $f(s, -r) \neq 0$. 
\begin{corollary}\label{cor_unique_geometric_orbit}
    Suppose that $K$ is algebraically closed. Then the set $\SL_n(K) \backslash V_f(K)$ has exactly one element.
\end{corollary}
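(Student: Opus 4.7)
The plan is to chain Propositions \ref{prop_bijectionorbitsfmodules} and \ref{prop_base_point_free_description_of_orbits} to translate the problem into one about equivalence classes of pairs $(\alpha, z) \in I_f^\times \times (\wedge^n_K L_f - \{0\})$, and then exploit the fact that $L_f \cong K^n$ as $K$-algebras when $K$ is algebraically closed. Concretely, Proposition \ref{prop_bijectionorbitsfmodules} identifies $\SL_n(K) \backslash V_f(K)$ with the set of isomorphism classes of $f$-modules, and Proposition \ref{prop_base_point_free_description_of_orbits} further injects this into equivalence classes of pairs modulo $(\alpha, z) \sim (\beta^2 \alpha, \mathbf{N}_{L_f / K}(\beta)^{-1} z)$, whose image is cut out by the validity condition $\disc_z(x (\mathrm{ev}_y \circ \varphi_\alpha) - y (\mathrm{ev}_x \circ \varphi_\alpha)) = f(x, y)$. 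It therefore suffices to prove that the set of equivalence classes of valid pairs has exactly one element.

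For non-emptiness, I would pick any $\alpha \in I_f^\times$ and any $z_0 \in \wedge^n_K L_f - \{0\}$; by the remark immediately following Proposition \ref{prop_base_point_free_description_of_orbits}, the resulting discriminant equals $c\, f(x, y)$ for some $c \in K^\times$. Since $K$ is algebraically closed, one can choose $\mu \in K^\times$ with $\mu^2 = c^{-1}$; then a basis-change computation gives $\disc_{\mu z_0}(B) = \mu^2 \disc_{z_0}(B)$ for any symmetric bilinear form $B$, so $z = \mu z_0$ yields a valid pair.

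For uniqueness, take two valid pairs $(\alpha, z)$ and $(\alpha', z')$ and write $\gamma = \alpha'/\alpha \in L_f^\times$, which makes sense because $I_f$ is free of rank one over $L_f$. Since $L_f \cong K^n$, the group $L_f^\times$ is divisible, so there exists $\beta_0 \in L_f^\times$ with $\beta_0^2 = \gamma$. Applying the equivalence relation to $(\alpha, z)$ reduces the problem to comparing two valid pairs $(\alpha', \mathbf{N}_{L_f / K}(\beta_0)^{-1} z)$ and $(\alpha', z')$ sharing the same first coordinate. The same basis-change computation as above forces their second coordinates to differ by some $\lambda \in K^\times$ with $\lambda^2 = 1$, i.e., $\lambda = \pm 1$.

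The one subtle step I anticipate is resolving this final sign. The plan is to modify $\beta_0$ to $\beta = \epsilon \beta_0$ with $\epsilon \in L_f^\times$ satisfying $\epsilon^2 = 1$: this preserves $\beta^2 = \gamma$ while multiplying $\mathbf{N}_{L_f / K}(\beta_0)$ by $\mathbf{N}_{L_f / K}(\epsilon)$. Under $L_f \cong K^n$, taking $\epsilon = (-1, 1, \ldots, 1)$ gives $\mathbf{N}_{L_f / K}(\epsilon) = -1$, so both signs are achievable and the two valid pairs are equivalent, completing the proof.
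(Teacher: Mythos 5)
Your proof is correct and takes essentially the same approach as the paper: translate via Propositions \ref{prop_bijectionorbitsfmodules} and \ref{prop_base_point_free_description_of_orbits} into equivalence classes of pairs $(\alpha, z)$, use that over an algebraically closed field every element of $L_f^\times$ is a square and $z$ is determined up to sign, and resolve the residual $\pm 1$ by adjusting $\beta$ by an idempotent-supported sign with norm $-1$. Your write-up is slightly more explicit about the last sign-fixing step than the paper's terse ``Therefore,'' but the underlying argument is the same.
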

\begin{proof}
    Since $L_f$ is an \'etale $K$-algebra and $K$ algebraically closed, $L_f\simeq K \times \cdots \times K$, every element of $L_f$ is a square and any two elements of $I_f^{\times}$ differ by a square in $L_f^{\times}$.
    Moreover, for every $\alpha \in I_f^{\times}$ there exists $e\in (\wedge^n_K L_f - \{ 0 \})$ such that $(L_f, \varphi_{\alpha}, e)$ is an $f$-module, in the notation of Proposition \ref{prop_base_point_free_description_of_orbits}.
    If $e'\in (\wedge^n_K L_f - \{ 0 \})$ is such that $(L_f, \varphi_{\alpha}, e')$ is an $f$-module, then $e' = \pm e$.
    Therefore every pair $(\alpha',z') \in I_f^\times \times (\wedge^n_K L_f - \{ 0 \})$ is equivalent to the fixed pair $(\alpha, e)$.
\end{proof}

A common special case is when $f_0 \neq 0$ (so in particular $f(1, 0) \neq 0$).  Then various objects become more explicit, because $S_f$ is contained in the distinguished affine open $D^+(y) \subset \bbP^1_K$. Taking $X = x / y$, we find that $L_f = K[X] / (f(X, 1))$. Thus $L_f$ has the power basis $1, X, \dots, X^{n-1}$, while $I_f, J_f$ have $L_f$-basis the elements $y^{n-3}$, $y^{n-2}$, respectively. 
\begin{lemma}\label{lem_computation_of_zeta}
    With notation as in the previous paragraph, the map $\zeta : J_f \to K$ is equal to $f_0^{-1} (x^{n-1} y^{-1})^\ast$, where $(y^{n-2})^\ast, (x y^{n-3})^\ast, \dots, (x^{n-1} y^{-1})^\ast$ is the $K$-basis of $\Hom_K(J_f, K)$ dual to the $K$-basis $y^{n-2}$, $x y^{n-3}, \dots, x^{n-1} y^{-1}$ of $J_f$.
\end{lemma}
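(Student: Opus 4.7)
The plan is to compute $\zeta$ on each basis vector of $J_f$ directly, using \v{C}ech cohomology for the cover $\mathfrak{U} = \{U_0, U_1\}$ of $\bbP^1_K$ with $U_0 = D^+(x)$ and $U_1 = D^+(y)$. By its defining property in (\ref{eqn_J_f}), $\zeta$ equals the connecting homomorphism $\partial : J_f \to H^1(\bbP^1_K, \cO_{\bbP^1_K}(-2))$ associated to the short exact sequence $0 \to \cO_{\bbP^1_K}(-2) \xrightarrow{\times f} \cO_{\bbP^1_K}(n-2) \to \cO_{S_f}(n-2) \to 0$, composed with the identification of the target with $K$ that sends the \v{C}ech class of $(xy)^{-1}$ to $1$. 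Hence it suffices to evaluate $\partial$ on each basis element.

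For the $n-1$ basis elements $x^i y^{n-2-i}$ with $0 \leq i \leq n-2$, the corresponding monomial is a \emph{global} section of $\cO_{\bbP^1_K}(n-2)$ restricting to the given section of $\cO_{S_f}(n-2)$. Hence each of these lies in the image of $H^0(\bbP^1_K, \cO_{\bbP^1_K}(n-2)) \to J_f$ and is killed by $\partial$, so $\zeta$ annihilates them. It remains only to compute $\zeta(x^{n-1}y^{-1})$.

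For this, on $U_1$ lift $x^{n-1}y^{-1}$ to itself (a section of $\cO_{\bbP^1_K}(n-2)$ there). On $U_0$, use the relation $f_0 x^n + f_1 x^{n-1}y + \cdots + f_n y^n = 0$ on $S_f$, which after dividing by $f_0 xy$ yields the equality
\begin{align*}
x^{n-1}y^{-1} = -f_0^{-1}\bigl(f_1 x^{n-2} + f_2 x^{n-3} y + \cdots + f_{n-1} y^{n-2} + f_n y^{n-1} x^{-1}\bigr)
\end{align*}
in $J_f$, giving a lift to a section of $\cO_{\bbP^1_K}(n-2)$ over $U_0$. A direct computation shows that the difference of these two lifts on $U_0 \cap U_1$ equals $f_0^{-1}(xy)^{-1} \cdot f$, so by definition of the connecting map, $\partial(x^{n-1}y^{-1})$ is represented by the \v{C}ech cocycle $f_0^{-1}(xy)^{-1}$. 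Therefore $\zeta(x^{n-1}y^{-1}) = f_0^{-1}$, and combined with the vanishing on the remaining basis vectors this gives $\zeta = f_0^{-1}(x^{n-1}y^{-1})^\ast$. There is no real obstacle here; the only subtlety is matching the sign convention for $\partial$ with the choice of generator of $H^1(\bbP^1_K, \cO_{\bbP^1_K}(-2))$ specified in the excerpt, which can be checked by unwinding definitions.
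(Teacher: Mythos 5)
Your proof is correct and follows essentially the same approach as the paper: the paper also computes $\zeta$ as the \v{C}ech connecting map for the cover $\{D^+(x), D^+(y)\}$, observes that $\zeta$ vanishes on the monomials $y^{n-2}, \dots, x^{n-2}$ lying in the image of $H^0(\bbP^1_K, \cO_{\bbP^1_K}(n-2))$, and evaluates $\partial(x^{n-1}y^{-1})$ by the same two local lifts, arriving at the 1-cocycle $f_0^{-1}(xy)^{-1}$. The only cosmetic difference is that the paper labels the charts oppositely ($U_0 = D^+(y)$, $U_1 = D^+(x)$).
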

\begin{proof}
    We compute using the Cech cohomology of the affine covering $U_0 = D^+(y)$, $U_1 = D^+(x)$ of $\P^1_K$. 
    Our hypothesis $f_0 \neq 0$ is equivalent to the condition $S_f \subset D(y)$. Thus the map 
    \[ H^0(S_f, \cO_{S_f}(n-2)) \to H^0(U_0, \cO_{S_f}(n-2)) = K[X] \cdot y^{n-2} / K[X] \cdot y^{-2} f(x, y) = K[X] \cdot y^{n-2} / f(X, 1 )K[X] \cdot y^{n-2} \]
    is an isomorphism. This shows that the elements $y^{n-2}, x y^{n-3}, \dots, x^{n-1} y^{-1} \in K[X] \cdot y^{n-2}$ project to a $K$-basis of $J_f$. Since $(xy)^{-1} f(x, y) \in H^0(U_0 \cap U_1, \cO_{\P^1_K}(n-2))$ maps to $0$ in $H^0(U_0 \cap U_1, \cO_{S_f}(n-2))$, the Cech cocycle in 
    \[ H^0( U_0, \cO_{S_f}(n-2)) \oplus H^0 (U_1, \cO_{S_f}(n-2))  \]
    corresponding to $x^{n-1} y^{-1}$ is $(x^{n-1} y^{-1}, (  x^{n-1} y^{-1} - f_0^{-1}(xy)^{-1} f(x, y)))$.

    Recall that $\zeta$ is defined as the composite of the boundary map $J_f = H^0(S_f, \cO_{S_f}(n-2)) \to H^1(\P^1_K, \cO_{S_f}(-2))$ associated to the short exact sequence
    \[ 0 \to \cO_{\P^1_K}(-2) \overset{\times f}{\to} \cO_{\P^1_K}(n-2) \to \cO_{S_f}(n-2) \to 0 \]
    with the isomorphism $H^1(\P^1_K, \cO_{\P^1_K}(-2)) \cong K$ afforded by Serre duality. In particular, it vanishes on the codimension 1 subspace $H^0(\P^1_K, \cO_{\P^1_K}(n-2))$ of $J_f$. To prove the lemma, we need to show that $\zeta$ takes the value $f_0^{-1}$ on $x^{n-1} y^{-1}$. 
    
    The image of $x^{n-1} y^{-1}$ under the boundary homomorphism is represented by the cocycle 
    \[ (x^{n-1} y^{-1} - (  x^{n-1} y^{-1} - f_0^{-1}(xy)^{-1} f(x, y)))/f(x, y) = f_0^{-1} (xy)^{-1} \in H^0(U_0 \cap U_1, \cO_{\P^1_K}(-2)). \]
    By definition, the map $\zeta$ takes $(xy)^{-1}$ to $1$, so it follows that $\zeta(x^{n-1} y^{-1}) = f_0^{-1}$, as claimed. 
\end{proof} 
In this way, we recover the description of the set $\SL_n(K) \backslash V_f(K)$ given in the special case $f_0 \neq 0$ in \cite[Theorem 7]{BGW15}:
\begin{corollary}\label{cor_base_point_description_of_rational_orbits}
    Suppose that $f_0 \neq 0$. Then the following three sets are in bijection:
    \begin{enumerate}
         \item The set of orbits $\SL_n(K) \backslash V_f(K)$.
        \item The set of isomorphism classes of $f$-modules $(M, 
        \varphi, e)$.
        \item The set of equivalence classes of pairs $(\alpha, z) \in L_f^\times \times K^\times$ satisfying $z^{2} \mathbf{N}_{L_f / K}(\alpha) = f_0^{n+1}$, where we say $(\alpha, z)$, $(\alpha', z')$ are equivalent if there exists $\beta \in L_f^\times$ such that $\alpha' = \beta^2 \alpha$ and $z' = \mathbf{N}_{L_f / K}(\beta)^{-1} z$. 
    \end{enumerate}
\end{corollary}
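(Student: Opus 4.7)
The plan is to combine Propositions \ref{prop_bijectionorbitsfmodules} and \ref{prop_base_point_free_description_of_orbits} with explicit trivializations of $I_f$ and $\wedge^n_K L_f$ that become available when $f_0 \neq 0$. The bijection $(1) \leftrightarrow (2)$ is already Proposition \ref{prop_bijectionorbitsfmodules}, so the task is to match (2) with (3). By Proposition \ref{prop_base_point_free_description_of_orbits}, isomorphism classes of $f$-modules correspond to equivalence classes of pairs $(\alpha, z) \in I_f^\times \times (\wedge^n_K L_f - \{0\})$ cut out by the identity $\disc_z(x(\mathrm{ev}_y \circ \varphi_\alpha) - y(\mathrm{ev}_x \circ \varphi_\alpha)) = f(x,y)$.

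When $f_0 \neq 0$, $S_f \subset D^+(y) \subset \bbP^1_K$, so $y^{n-3}$ (resp.\ $y^{n-2}$) globally trivializes $\cO_{S_f}(n-3)$ (resp.\ $\cO_{S_f}(n-2)$), yielding $I_f \cong J_f \cong L_f$ as $L_f$-modules and hence $I_f^\times \cong L_f^\times$. Setting $X = x/y$, the power basis $\{1, X, \dots, X^{n-1}\}$ identifies $\wedge^n_K L_f - \{0\}$ with $K^\times$. Under these identifications, multiplication by $y$ (resp.\ $x$) from $I_f$ to $J_f$ becomes the identity (resp.\ multiplication by $X$), so $\mathrm{ev}_y \circ \varphi_\alpha$ and $\mathrm{ev}_x \circ \varphi_\alpha$ are the symmetric bilinear forms $(u,v) \mapsto \zeta(uv\alpha)$ and $(u,v) \mapsto \zeta(Xuv\alpha)$ on $L_f$; the equivalence relation on pairs then matches the one stated in (3).

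Next I would reduce the discriminant condition to an equation of norms. For any $\gamma \in L_f^\times$, the Gram matrix in the power basis of the form $(u,v) \mapsto \zeta(\gamma uv)$ is $M_\gamma^t C_0$, where $M_\gamma$ is the matrix of multiplication by $\gamma$ and $C_0$ the Gram matrix of $(u,v) \mapsto \zeta(uv)$. Taking $\gamma = (x - yX)\alpha$, and using that the characteristic polynomial of $M_X$ is $f(T,1)/f_0$, one finds $\det(xA_y - yA_x) = (f(x,y)/f_0) \cdot \mathbf{N}_{L_f/K}(\alpha) \cdot \det C_0$ in the power basis. Changing to a basis whose wedge is $z \cdot (1 \wedge \dots \wedge X^{n-1})$ scales $\det$ by $z^2$, and $\disc_z$ attaches a sign $(-1)^{n(n-1)/2}$.

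The crux is the evaluation of $\det C_0$. Combining Lemma \ref{lem_computation_of_zeta} with Lagrange interpolation over the roots $r_1, \dots, r_n$ of $f(X, 1) \in \bar K[X]$ gives $\zeta(p) = \sum_{k=1}^n p(r_k) / f'(r_k, 1)$. This factors $C_0 = V^t D V$ with $V$ the Vandermonde $V_{ki} = r_k^i$ and $D = \mathrm{diag}(1/f'(r_k, 1))$; the identities $\det(V)^2 = \prod_{i<j}(r_j - r_i)^2$ and $\prod_k f'(r_k, 1) = (-1)^{n(n-1)/2} f_0^n \prod_{i<j}(r_j - r_i)^2$ yield $\det C_0 = (-1)^{n(n-1)/2}/f_0^n$. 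Substituting reduces $\disc_z(\cdots) = f(x,y)$ to $z^2 \mathbf{N}_{L_f/K}(\alpha) = f_0^{n+1}$. Surjectivity onto (3) is automatic: any such pair produces an $f$-module via the construction in Proposition \ref{prop_bijectionorbitsfmodules}. I expect the only real obstacle to be keeping track of signs in the Vandermonde computation, but this is essentially routine.
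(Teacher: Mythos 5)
Your proof is correct and follows the same overall strategy as the paper: use the trivializations $I_f \cong L_f$ and $\wedge^n_K L_f \cong K$ afforded by $f_0 \neq 0$, push the pair $(\alpha, z)$ through Proposition \ref{prop_base_point_free_description_of_orbits}, and reduce the discriminant condition to the norm equation by factoring the Gram matrix as (matrix of multiplication by $(x - yX)\alpha$) times (Gram matrix of the pairing $(u,v)\mapsto\zeta(uv\,y^{n-2})$). The only genuine divergence is at the final computational step, and it is worth noting. To evaluate $\det C_0$, you invoke Lagrange interpolation over the roots of $f(X,1)$ and a Vandermonde factorization $C_0 = V^t D V$, arriving at $\det C_0 = (-1)^{n(n-1)/2} f_0^{-n}$; this works but requires passing to $\bar K$ and tracking the sign in the identity $\prod_k f'(r_k,1) = (-1)^{n(n-1)/2} f_0^n \prod_{i<j}(r_i - r_j)^2$. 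The paper instead observes directly (via Lemma \ref{lem_computation_of_zeta}) that $\zeta(uv\,y^{n-2}) = f_0^{-1}\tau(uv)$ where $\tau$ is the $X^{n-1}$-coordinate in the power basis, and that the Gram matrix of $\tau(uv)$ is anti-triangular with $1$s on the anti-diagonal, so $\disc_e(\tau(uv)) = 1$ by inspection — no base change and no Vandermonde. Both are correct; the paper's route is slicker and stays over $K$, while yours is more of a brute-force computation. A second small difference: the paper exploits the remark preceding the corollary to check the single specialization $\disc_{ze}(\mathrm{ev}_y\circ\varphi_\alpha) = f_0$, whereas you compute the full binary form $\det(xA_y - yA_x)$; yours is marginally more work but requires no appeal to that remark. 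Your remark that ``surjectivity onto (3) is automatic'' via Proposition \ref{prop_bijectionorbitsfmodules} is slightly misplaced — surjectivity follows from part (2) of Proposition \ref{prop_base_point_free_description_of_orbits} once the discriminant and norm conditions are shown equivalent — but this is a bookkeeping issue, not a gap.
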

\begin{proof}
    Let $e = 1 \wedge X \wedge \dots \wedge X^{n-1} \in \wedge^n_K L_f$. We define a bijection $L_f^\times \times K^\times \to I_f^\times \times (\wedge^n_K L_f - \{ 0 \})$ by the formula 
    \[ (\alpha, z) \mapsto (\alpha y^{n-3}, z e). \]
    This descends to a bijection between the corresponding equivalence classes. If $\alpha \in L_f^\times$, let $\varphi_\alpha : L_f \otimes_{L_f} L_f \to I_f$ be the map $u \otimes v \mapsto \alpha u v y^{n-3}$. 
    
    What we need to check is that the condition $\disc_{ze}( \mathrm{ev}_y \circ \varphi_\alpha) = f_0$ arising from Proposition \ref{prop_base_point_free_description_of_orbits} is equivalent to the condition $z^{2} \mathbf{N}_{L_f / K}(\alpha) = f_0^{n+1}$ appearing in the statement of Corollary \ref{cor_base_point_description_of_rational_orbits}. This is a computation: 
    \[ \disc_{ze}( \mathrm{ev}_y \circ \varphi_\alpha) = z^2 \disc_e ( \zeta(\alpha u v y^{n-2}) ) = z^2 \mathbf{N}_{L_f / K}(\alpha) \disc_e (\zeta (u v y^{n-2} ) ), \]
    where we write e.g. $\zeta (u v y^{n-2})$ for the $K$-bilinear form $L_f \times L_f \to K$, $(u, v) \mapsto \zeta (u v y^{n-2} )$. Using Lemma \ref{lem_computation_of_zeta}, we see that $\zeta (u v y^{n-2}) = f_0^{-1} \tau(uv)$, where $\tau : L_f \to K$ is the element of the dual basis of the $K$-basis $1, X, \dots, X^{n-1}$ of $L_f$ corresponding to $X^{n-1}$. We have $\disc_e( \tau(uv)) = 1$, hence $\disc \zeta (u v y^{n-2}) = f_0^{-n}$, and finally
    \[ \disc_{ze}( \mathrm{ev}_y \circ \varphi_\alpha) = z^2 \mathbf{N}_{L_f / K}(\alpha) f_0^{-n}. \]
    This is what we needed. 
\end{proof}

\subsection{Integral orbits}

We will use the point of view developed in \S\ref{subsec_rational_orbits} to construct elements of $\SL_n(K) \backslash V_f(K)$ from $K$-rational divisors on hyperelliptic curves in \S \ref{sec_integral_orbit_representatives} below. We conclude with a lemma that will be useful in extending this construction to produce integral orbits. Since we now want to consider integral structures, let us reset our notation. Let  $A$ be a PID with fraction field $K$ of characteristic 0, and let $f(x, y) = f_0 x^n + \dots + f_n y^n \in A[x, y]$ be a homogeneous polynomial of degree $n$ and non-zero discriminant $\Delta(f) \in A$. Let $S_f \subset \bbP^1_A$ denote the zero locus of $f$, and let $S_{f, K}$ denote its generic fibre. Similarly, let $I_f = H^0(S_f, \cO_{S_f}(n-3))$, $I_{f, K} = I_f \otimes_A K$ its generic fibre. 
\begin{lemma}\label{lem_application_of_integral_lattice}
    Let $(M_K, \varphi_K, e)$ be an $f$-module, and suppose given an $A$-lattice $M \leq M_K$ satisfying the following conditions:
    \begin{enumerate}
        \item The restriction $\varphi$ of the map $\varphi_K : M_K \times M_K \to I_{f, K}$ to $M \times M$ takes values in $I_f$.
        \item There exists an $A$-basis $\mathcal{B}$ of $M$ such that $\disc_\mathcal{B}( x (\mathrm{ev}_y \circ \varphi) - y (\mathrm{ev}_x \circ \varphi) ) = u f(x, y)$ for some $u \in A^\times$.  
    \end{enumerate}
    Then:
    \begin{enumerate}
        \item There exists an $A$-basis $\mathcal{B}' = \{ b_1', \dots, b_n' \}$ of $M$ such that $e = b_1' \wedge \dots \wedge b_n'$.
        \item If $A_y,A_x$ are the matrices of $\mathrm{ev}_y \circ \varphi$, $\mathrm{ev}_x \circ \varphi$ with respect to the basis $\mathcal{B}'$, then $(A_y,A_x)$ is a representative in $V_f(A)$ for the $\SL_n(K)$-orbit corresponding to the $f$-module $(M_K, \varphi_K, e)$ under Proposition \ref{prop_bijectionorbitsfmodules}.
    \end{enumerate}
\end{lemma}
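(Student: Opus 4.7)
The plan is to use the $f$-module condition to show that $e$ agrees with the wedge of any $A$-basis of $M$ up to a unit in $A$, and then read off the matrices $A_y, A_x$ with respect to a suitably renormalized basis. Fix the $A$-basis $\mathcal{B} = \{b_1, \dots, b_n\}$ provided by hypothesis (2). Since $\wedge^n_K M_K$ is one-dimensional, there is a unique $c \in K^\times$ with $e = c \cdot (b_1 \wedge \dots \wedge b_n)$. The key (and only nontrivial) step is to show $c \in A^\times$, from which both conclusions follow quickly.

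To prove $c \in A^\times$, I compare two discriminants. The $K$-basis $\{cb_1, b_2, \dots, b_n\}$ of $M_K$ has wedge equal to $e$ and differs from $\mathcal{B}$ only by scaling the first vector by $c$; under such a rescaling the determinant of the matrix of a symmetric bilinear form scales by $c^2$. Hence the $f$-module axiom applied to this basis gives
\[ c^2 \cdot \disc_\mathcal{B}\bigl( x(\mathrm{ev}_y \circ \varphi) - y(\mathrm{ev}_x \circ \varphi) \bigr) = f(x,y), \]
while hypothesis (2) states that $\disc_\mathcal{B}\bigl(\cdots\bigr) = u f(x,y)$ with $u \in A^\times$. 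Therefore $c^2 u = 1$, so $c^2 \in A^\times$. Because $A$ is a PID (in particular a UFD), for every prime $\nu$ of $A$ one has $2\nu(c) = \nu(c^2) = 0$, so $\nu(c) = 0$; hence $c \in A^\times$. Setting $\mathcal{B}' = \{cb_1, b_2, \dots, b_n\}$ then gives an $A$-basis of $M$ whose wedge is $e$, proving (1).

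For (2), with respect to $\mathcal{B}'$ the matrices $A_y, A_x$ of $\mathrm{ev}_y \circ \varphi$ and $\mathrm{ev}_x \circ \varphi$ lie in $\Sym_2(A^n)$, since $\varphi : M \times M \to I_f$ by hypothesis (1) and $\mathrm{ev}_x, \mathrm{ev}_y : I_f \to A$. The same discriminant computation, now read in basis $\mathcal{B}'$ (which has wedge $e$ and hence realizes the $f$-module axiom with no rescaling), yields $f_{A_y, A_x} = f$, so $(A_y, A_x) \in V_f(A)$. Finally, the bijection of Proposition \ref{prop_bijectionorbitsfmodules} assigns to $(M_K, \varphi_K, e)$ the $\SL_n(K)$-orbit of the pair produced from any $K$-basis of $M_K$ whose wedge is $e$, and $\mathcal{B}'$ is such a basis, so $(A_y, A_x)$ represents the desired orbit. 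The main obstacle is the implication $c^2 \in A^\times \Rightarrow c \in A^\times$, which is precisely where the PID hypothesis on $A$ is essential; the remaining steps are bookkeeping with the definition of an $f$-module.
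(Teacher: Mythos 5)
Your proof is correct and follows essentially the same route as the paper's: both reduce the problem to showing that the scalar relating $e$ to $b_1 \wedge \dots \wedge b_n$ is a unit of $A$, using the two discriminant normalizations to see that its square lies in $A^\times$ and then invoking the PID hypothesis. You merely make the last step (passing from $c^2 \in A^\times$ to $c \in A^\times$) explicit via valuations, which the paper compresses into ``and so $u = v^2$ for some $v \in A^\times$.''
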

\begin{proof}
    Let $\mathcal{C} = \{ c_1, \dots, c_n \}$ be a $K$-basis of $M_K$ such that $c_1 \wedge \dots \wedge c_n = e$, so that 
    \[ \disc_\mathcal{C}( x (\mathrm{ev}_y \circ \varphi) - y (\mathrm{ev}_x \circ \varphi) ) = f(x, y). \]
    If $g \in \GL_n(K)$ is the change of basis matrix for $\mathcal{B}$, $\mathcal{C}$, then $\disc_\mathcal{B} = \det(g)^2 \disc_\mathcal{C}$, hence $u = \det(g)^2$, and so $u = v^2$ for some $v \in A^\times$. It follows that we can choose e.g. $b_1' = v^{-1} b_1$, $b_i' = b_i$ if $i = 2, \dots, n$. 
\end{proof}

\subsection{Reduction theory}\label{subsec: reduction covariant def}

We define a reduction covariant for the representation $V(\Z)$ of $\SL_n(\Z)$, a key technical tool in our reduction theory.
To this end, we start with a slightly more general set-up.

Let $M$ be an $n$-dimensional $\R$-vector space equipped with two symmetric bilinear forms $(-,-)_1, (-,-)_2\colon M\times M\rightarrow \mathbb{R}$.
Suppose that there exists a basis of $M$ such that the Gram matrices $A,B$ of $(-,-)_1, (-,-)_2$ have the property that $\det(A x - B y)\in \R[x,y]$ has nonzero discriminant.
We explain how to associate to this data a canonical positive definite inner product $H$ on $M$.
\begin{lemma}\label{lemma_existencediagonalbasis}
    There exists a $\mathbb{C}$-basis $b_1, \dots, b_n$ of $M \otimes_{\R} \mathbb{C}$ such that the Gram matrices of $(-,-)_1, (-,-)_2$ are both diagonal. 
    Moreover, such a basis is unique up to reordering and rescaling.
\end{lemma}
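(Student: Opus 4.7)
The plan is to reduce this to a generalized eigenvalue problem. First I would observe that whether a basis simultaneously diagonalizes $(-,-)_1, (-,-)_2$ depends only on the two-dimensional subspace of symmetric bilinear forms they span: diagonalizing two linearly independent members of the pencil is the same as diagonalizing every member. Since $f(x, y) = \det(Ax - By)$ has at most $n$ roots in $\mathbb{P}^1(\mathbb{C})$, infinitely many $[x_0 : y_0] \in \mathbb{P}^1(\mathbb{C})$ give an invertible form $x_0 A - y_0 B$. Choosing two linearly independent such combinations, I may assume without loss of generality that both $A$ and $B$ are invertible.

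Next I would consider the endomorphism $T = B^{-1} A$ of $M \otimes_{\R} \mathbb{C}$. Its characteristic polynomial is $\det(\lambda I - T) = \det(B)^{-1} \det(\lambda B - A)$, which up to sign equals $f(1, \lambda)$. By the nonzero discriminant hypothesis, the $n$ roots of $f(1, \lambda)$ are distinct, so $T$ has $n$ distinct eigenvalues $\lambda_1, \dots, \lambda_n$ and admits an eigenbasis $b_1, \dots, b_n$ of $M \otimes_{\R} \mathbb{C}$, satisfying $A b_i = \lambda_i B b_i$.

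Then I would verify orthogonality. For $i \neq j$, using $A b_j = \lambda_j B b_j$ I compute $(b_i, b_j)_1 = b_i^T A b_j = \lambda_j b_i^T B b_j = \lambda_j (b_i, b_j)_2$. By the symmetry of the forms, swapping the roles of $i$ and $j$ yields $(b_i, b_j)_1 = \lambda_i (b_i, b_j)_2$ as well. Subtracting gives $(\lambda_i - \lambda_j)(b_i, b_j)_2 = 0$, and since $\lambda_i \neq \lambda_j$ we conclude $(b_i, b_j)_2 = 0$ and hence $(b_i, b_j)_1 = 0$. So the Gram matrices in this basis are both diagonal.

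For uniqueness, if $b'_1, \dots, b'_n$ is any basis simultaneously diagonalizing both forms, then in this basis $A = \mathrm{diag}(a_i)$ and $B = \mathrm{diag}(c_i)$, with $c_i \neq 0$ by invertibility of $B$, so $T = \mathrm{diag}(a_i/c_i)$. Each $b'_i$ is therefore an eigenvector of $T$; since $T$ has $n$ distinct eigenvalues, each eigenspace is one-dimensional, so the $b'_i$ agree with $b_1, \dots, b_n$ up to reordering and rescaling. There is no genuine obstacle here beyond checking the classical simultaneous diagonalization argument; the only mild subtlety is the initial reduction to both forms being non-degenerate, which is what allows us to form $T = B^{-1}A$ in the first place.
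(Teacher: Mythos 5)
Your proof is correct, and the existence half takes a genuinely different (and more elementary) route from the paper's. The paper proves existence by invoking the $f$-module machinery of \S3.2: it realizes $(M, (-,-)_1, (-,-)_2)$ as part of an $f$-module $(M,\varphi,e)$ via Proposition~\ref{prop_bijectionorbitsfmodules}, picks an $L_f$-module generator $v$, and takes the basis $e_1\cdot v,\dots,e_n\cdot v$ where the $e_i$ are the primitive idempotents of $L_f\otimes_\R\mathbb{C}\cong\mathbb{C}^n$; orthogonality is then automatic because $\varphi(e_iv, e_jv)=e_ie_j\varphi(v,v)=0$ for $i\neq j$. You instead solve the generalized eigenvalue problem directly: after a harmless $\GL_2$-change of pencil making $A$ and $B$ invertible (the nonzero-discriminant hypothesis is stable under this since $\disc$ scales by a nonzero factor), the operator $T=B^{-1}A$ has characteristic polynomial proportional to $f(1,\lambda)$, hence $n$ distinct eigenvalues, and you check orthogonality by the standard symmetric-pencil computation. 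Your approach is self-contained linear algebra and avoids Proposition~\ref{prop_bijectionorbitsfmodules}, which is a plus in isolation; the paper's approach buys a tighter integration with the $f$-module language that is reused later (e.g.\ in the proof of Proposition~\ref{prop_normof1}, where the same idempotent basis appears). For uniqueness the two proofs essentially coincide: the paper observes that $A_a^{-1}B$ is regular semisimple for a suitable invertible combination $A_a=sA+rB$, and you observe the same for $T=B^{-1}A$ after your reduction; either way uniqueness up to reordering and rescaling follows from one-dimensionality of the eigenspaces.
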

\begin{proof}
    It suffices to prove these claims when $M = \mathbb{R}^n$, in which case we may view the pair $(-,-)_1, (-,-)_2$ as an element $(A,B)\in V(\mathbb{R})$ with invariant binary form $f(x,y) \in \mathbb{R}[x,y]$ of nonzero discriminant. 
    By Proposition \ref{prop_bijectionorbitsfmodules} and its proof, $M$ is part of an $f$-module $(M, \varphi, e)$ with $\mathrm{ev}_y \circ \varphi = A$ and $\mathrm{ev}_x\circ \varphi = B$.
    Let $v\in M$ be an $L_f$-module generator. 
    Let $e_1, \dots, e_n$ be the idempotents corresponding to a decomposition $L_f \otimes_{\R}\mathbb{C} \simeq \mathbb{C} \times \cdots \times \mathbb{C}$.
    Then $e_1 \cdot v, \dots, e_n\cdot  v$ is a $\mathbb{C}$-basis of $M \otimes_{\R} \mathbb{C}$ and $\varphi(e_i\cdot v, e_j\cdot  v)=e_ie_j \varphi(v,v) = 0$ if $i\neq j$, so the forms $A,B$ are both diagonal in this basis, proving existence.
    To prove uniqueness (up to reordering and rescaling), let $(s,-r) \in \R^2$ with $sf(s,-r)\neq 0$, let $a(x,y)=rx + sy$ and let $A_a = s A + rB$.
    Then $\det(A_a)\neq 0$, and any basis in which $A,B$ are diagonal has the property that $A_a^{-1} B$ is also diagonal. 
    Since $\disc(f) \neq 0$, $A_{a}^{-1} B$ is regular semisimple, hence such a basis is unique up to reordering and rescaling.
\end{proof}

\begin{construction}\label{con_reduction_covariant}
Let $b_1, \dots, b_n$ be a $\mathbb{C}$-basis of $M \otimes_{\R} \mathbb{C}$ for which the Gram matrices of $(-,-)_1, (-,-)_2$ are both diagonal. 
The assignment
\begin{align*}
    (b_i,b_j)_H = 
    \begin{cases}
        \max(|(b_i,b_i)_1, |(b_i,b_i)|_2) & \text{ if } i=j, \\
        0 & \text{ if } i\neq j,
    \end{cases}
\end{align*}
extends to a Hermitian form on $M \otimes_{\R} \mathbb{C}$.
Let $H$ be the restriction of this Hermitian form to $M$.
\end{construction}

\begin{proposition}\label{prop_basicpropsreductioncovariant}
    In the above construction, $H$ is independent of the choice of $b_1,\dots,b_n$, real valued and positive definite. 
    Moreover, if $(M', (-,-)_1', (-,-)_2')$ is another such triple and $\psi \colon (M,(-,-)_1, (-,-)_2)\rightarrow (M', (-,-)_1', (-,-)_2')$ is an isomorphism of vector spaces intertwining the respective forms, then $\psi$ intertwines $H$ and $H'$.
\end{proposition}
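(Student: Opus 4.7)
The plan is to verify the four asserted properties of $H$ in order—well-definedness, real-valuedness on $M$, positive definiteness, and functoriality—leaning in each step on the uniqueness, up to permutation and rescaling, of the diagonalizing basis from Lemma~\ref{lemma_existencediagonalbasis}. The main subtle point is real-valuedness on $M$, which requires understanding how complex conjugation on $M\otimes_\R\bC$ acts on the basis $b_1,\dots,b_n$.

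For well-definedness, write $d_i = \max_k|(b_i,b_i)_k|$, so that $H$ is defined by $H(b_i,b_j)=d_i\delta_{ij}$. Any other diagonalizing $\bC$-basis has the form $b_i' = \lambda_i b_{\sigma(i)}$ for a permutation $\sigma$ and nonzero $\lambda_i\in\bC$, giving $(b_i',b_i')_k = \lambda_i^2(b_{\sigma(i)},b_{\sigma(i)})_k$ and hence $d_i' = |\lambda_i|^2 d_{\sigma(i)}$. A direct computation shows $H(b_i', b_j') = \overline{\lambda_i}\lambda_j H(b_{\sigma(i)}, b_{\sigma(j)}) = |\lambda_i|^2 d_{\sigma(i)}\delta_{ij} = d_i'\delta_{ij}$, so $H$ also obeys the defining formula using the basis $\{b_i'\}$. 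Thus the construction is independent of the chosen basis.

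For real-valuedness on $M$, I plan to show $H(\bar v,\bar w)=\overline{H(v,w)}$ for all $v,w\in M\otimes_\R\bC$; combined with the Hermitian symmetry $H(v,w) = \overline{H(w,v)}$ this forces $H(v,w)\in\R$ whenever $v,w\in M$. Because each $(-,-)_k$ is real, conjugation preserves the diagonalizing property, so by Lemma~\ref{lemma_existencediagonalbasis} there exist $\sigma$ and $\lambda_i$ with $\bar b_i = \lambda_i b_{\sigma(i)}$. Comparing moduli in the identity $(b_{\sigma(i)},b_{\sigma(i)})_k = \lambda_i^{-2}\overline{(b_i,b_i)_k}$ yields $d_{\sigma(i)} = |\lambda_i|^{-2} d_i$, whence
\[ H(\bar b_i,\bar b_j) = \overline{\lambda_i}\,\lambda_j\, H(b_{\sigma(i)}, b_{\sigma(j)}) = |\lambda_i|^2 d_{\sigma(i)} \delta_{ij} = d_i \delta_{ij} = \overline{H(b_i,b_j)}. \]
The identity extends to all $v,w$ by sesquilinearity.

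For positive definiteness, it suffices to show each $d_i>0$. In the diagonal basis, the Gram matrices of $(-,-)_1$ and $(-,-)_2$ are $\mathrm{diag}((b_i,b_i)_1)$ and $\mathrm{diag}((b_i,b_i)_2)$, so a standard change-of-basis calculation shows that $\det(Ax - By)$ equals $\prod_i\bigl((b_i,b_i)_1\,x - (b_i,b_i)_2\,y\bigr)$ up to a nonzero constant. The nonzero discriminant hypothesis then forces every linear factor to be nontrivial, giving $\bigl((b_i,b_i)_1, (b_i,b_i)_2\bigr)\neq(0,0)$ and hence $d_i>0$. Finally, functoriality is immediate: if $\psi$ intertwines the forms, then $\psi(b_1),\dots,\psi(b_n)$ is a diagonalizing basis of $M'\otimes_\R\bC$ with the same diagonal entries $d_i$, so the defining formula produces $H' = \psi_\ast H$.
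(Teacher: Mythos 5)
Your proof is correct and follows essentially the same strategy as the paper: every step rests on the uniqueness (up to permutation and rescaling) of the diagonalizing basis from Lemma~\ref{lemma_existencediagonalbasis}. The one place where you diverge meaningfully is positive definiteness: the paper simply cites an external result (\cite[Lemma 6.4(b)]{BSW-squarefreeII}) to conclude that $(b_i,b_i)_1$ and $(b_i,b_i)_2$ cannot both vanish, whereas you prove this directly by observing that $\det(Ax-By)$ agrees with $\prod_i\bigl((b_i,b_i)_1 x - (b_i,b_i)_2 y\bigr)$ up to a nonzero constant and that a zero factor would kill the discriminant. Your argument is more self-contained and is a small improvement. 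For real-valuedness, the paper picks a basis adapted to conjugation ($b_1,\dots,b_k\in M$, the rest in conjugate pairs) while you prove the general identity $H(\bar v,\bar w)=\overline{H(v,w)}$; these are the same idea, and your appeal to Hermitian symmetry in that step is actually superfluous since the identity alone gives $H(v,w)=\overline{H(v,w)}$ for $v,w\in M$.
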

\begin{proof}
    By Lemma \ref{lemma_existencediagonalbasis}, the basis $b_1,\dots,b_n$ is unique up to reordering and rescaling, and these operations do not affect $H$.
    Choosing the basis so that $b_1,\dots,b_k\in M$ and $b_{k+1}, \dots,b_n$ come in complex conjugate pairs shows that $H$ is real valued.
    To show that $H$ is positive definite, we must show that $\max(|(b_i,b_i)_1|, |(b_i,b_i)_2|)>0$ for each $i=1,\dots,n$.
    This is true, since the nonzero discriminant condition implies (\cite[Lemma 6.4(b)]{BSW-squarefreeII}) that either $(b_i,b_i)_1$ or $(b_i,b_i)_2$ is nonzero for each $i$.
    The claim about compatibility with isomorphisms follows from the fact that the definition of $H$ is independent of any additional choices.
\end{proof}

Applying this construction to our framed vector space $W_{\R} =\R^n$ fixed in \S\ref{subsec_basicdefs}, we obtain an inner product $H_{A,B}$ on $W_{\R}$ associated to any pair $(A,B)\in V(\R)$ of nonzero discriminant.
Let $X$ denote the set of equivalence classes of inner products on $W_{\R}$, where two inner products $H$, $H'$ are defined to be equivalent when one is an $\R_{>0}$-multiple of the other.
Write $[H] \in X$ for the image of an inner product $H$ on $W_{\R}$.
The assignment $(A,B)\rightarrow [H_{A,B}]$ defines a map 
\begin{align}
    \mathcal{R}\colon V(\R)^{\Delta\neq 0} \rightarrow X,
\end{align}
which we call the reduction covariant.
The group $\SL_n(\R)$ acts on $X$ via the formula $(v,w)_{g\cdot H} = (g^{-1}v,g^{-1}w)_{H}$ or in terms of matrices, via the formula $g\cdot H = g^{-t} H g^{-1}$.
Let $H_0$ be the standard inner product on $W_{\R}$ with Gram matrix the identity matrix.
The map $g\mapsto g\cdot H_0$ induces an $\SL_n(\R)$-equivariant bijection $\SL_n(\R)/\SO_n(\R) \simeq X$.
The reduction covariant $\mathcal{R}$ is $\SL_n(\R)$-equivariant by the last sentence of Proposition \ref{prop_basicpropsreductioncovariant}, hence by passage to the quotient we obtain a map 
\begin{align}\label{equation: reduction covariant integral orbits}
    \mathcal{R}\colon \SL_n(\Z)\backslash V(\Z)^{\Delta\neq 0} \rightarrow \SL_n(\Z) \backslash X.
\end{align}

We record a computation of $\det H_{A,B}$ with respect to the standard basis of $W_{\R}$.
Let $(A,B) \in V(\R)$ with $\Delta(A,B) \neq 0$ and write
\begin{align*}
    f_{A,B} = c \prod_{i=1}^r (x-\omega_i y) \prod_{j=1}^k (\eta_j x- y),
\end{align*}
where $c, \omega_i, \eta_j \in \mathbb{C}$ and $|\omega_i|, |\eta_j|\leq 1$ for all $i,j$.
\begin{lemma}\label{lemma: determinant of reduction covariant}
    In the above notation, $\det(H_{A,B}) = |c|$.
\end{lemma}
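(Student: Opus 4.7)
The plan is to pass to the complex diagonalizing basis $b_1,\dots,b_n$ of $W_{\R}\otimes_{\R}\bC$ furnished by Lemma~\ref{lemma_existencediagonalbasis} and read off both sides of the desired identity in this basis. Let $\alpha_i = (b_i,b_i)_A$ and $\beta_i = (b_i,b_i)_B$, so that $H(b_i,b_i) = \max(|\alpha_i|,|\beta_i|)$ by Construction~\ref{con_reduction_covariant}, and let $g \in \GL_n(\bC)$ be the matrix whose columns are $b_1,\dots,b_n$ written in the standard basis. The two kinds of transformation (symmetric bilinear versus Hermitian) give $g^{t} A g = \mathrm{diag}(\alpha_i)$, $g^{t} B g = \mathrm{diag}(\beta_i)$, and $g^{\ast} H_{A,B}\, g = \mathrm{diag}(H(b_i,b_i))$, from which I will read off two determinant identities: $|\det g|^{2} \det(H_{A,B}) = \prod_i H(b_i,b_i)$, and $\det(g)^{2}\det(Ax - By) = \prod_i (\alpha_i x - \beta_i y)$.

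Combining the second identity with the definition $f_{A,B}(x,y) = (-1)^{n(n-1)/2}\det(Ax-By)$ and the assumed factorisation of $f_{A,B}$, I obtain the polynomial identity
\[
\prod_{i=1}^{n} (\alpha_i x - \beta_i y) \;=\; (-1)^{n(n-1)/2}\det(g)^{2}\, c \prod_{i=1}^{r}(x - \omega_i y)\prod_{j=1}^{k}(\eta_j x - y).
\]
The next step is to match factors: after a permutation, each $(\alpha_i x - \beta_i y)$ is a scalar multiple $\lambda_i(x-\omega_i y)$ of a type~1 factor, giving $|\alpha_i|=|\lambda_i|$ and $|\beta_i| = |\lambda_i||\omega_i| \leq |\lambda_i|$, or a scalar multiple $\mu_j(\eta_j x - y)$ of a type~2 factor, giving $|\beta_j| = |\mu_j|$ and $|\alpha_j|\leq |\mu_j|$. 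The normalisation $|\omega_i|,|\eta_j|\leq 1$ is exactly what is needed so that in both cases $H(b_i,b_i) = \max(|\alpha_i|,|\beta_i|)$ equals the modulus of the proportionality constant (a small check handles any ambiguity when $|\omega_i|=1$ or $|\eta_j|=1$).

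Finally, taking the product of these proportionality constants recovers $\prod \lambda_i \prod \mu_j = \pm \det(g)^{2} c$, so $\prod_i H(b_i,b_i) = |\det g|^{2}\,|c|$. Dividing by $|\det g|^{2}$ and using the Hermitian determinant identity yields $\det H_{A,B} = |c|$. The only substantive step is the factor-matching in the middle paragraph: once the two change-of-basis formulas are written down carefully (distinguishing $g^{t}$ from $g^{\ast}$) and the $|\omega_i|,|\eta_j|\leq 1$ normalisation is exploited, the rest is just bookkeeping of scalars, and no genuine obstacle arises.
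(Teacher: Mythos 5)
Your proof is correct, and it takes a genuinely different route from the paper's. The paper first constructs an explicit canonical pair $(A_0,B_0)$ of diagonal $\mathbb{C}$-matrices with $f_{A_0,B_0}=f_{A,B}$ (built from a chosen $n$-th root $c_0$ of $(-1)^{n(n-1)/2}c$), then invokes Corollary~\ref{cor_unique_geometric_orbit} to find $g\in\SL_n(\mathbb{C})$ with $g\cdot(A,B)=(A_0,B_0)$; because $\det g = 1$, the Hermitian change-of-basis formula immediately gives $g^{-t}H_{A,B}\bar{g}^{-1} = |c_0|\cdot\mathrm{Id}$ and hence $\det H_{A,B}=|c_0|^n=|c|$, with no bookkeeping at all. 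You instead work directly with the diagonalizing basis of Lemma~\ref{lemma_existencediagonalbasis}, allow the change-of-basis matrix $g$ to lie in $\GL_n(\mathbb{C})$, and keep track of $|\det g|^2$ explicitly; this forces the factor-matching argument and the product computation $\prod\lambda_i\prod\mu_j = \pm\det(g)^2\,c$, but has the advantage of not invoking Corollary~\ref{cor_unique_geometric_orbit}, so the argument is more self-contained. The key technical points you flag---the distinction between $g^t(\cdot)g$ for the bilinear forms and $g^*(\cdot)g$ for the Hermitian form, and the role of the normalisation $|\omega_i|,|\eta_j|\leq 1$ in ensuring $H(b_i,b_i)$ equals the modulus of the proportionality scalar---are exactly the places where one could slip, and you have them right. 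The paper's proof trades this bookkeeping for the geometric input of a transitive $\SL_n(\mathbb{C})$-action; yours trades that input for a direct scalar computation.
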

\begin{proof}
    Let $c_0\in \mathbb{C}$ be an element such that $c_0^n = (-1)^{n(n-1)/2}c$ and consider the pair of diagonal matrices $(A_0,B_0)\in V(\mathbb{C})$, where $A_0 = (c_0,\dots, c_0, c_0\eta_1,\dots,c_0\eta_k)$ and $B_0 = (c_0 \omega_1, \dots, c_0\omega_r, c_0,\dots, c_0)$.
    A computation shows that $f_{A_0,B_0} = f_{A,B}$.
    By Corollary \ref{cor_unique_geometric_orbit}, there exists a $g\in \SL_n(\mathbb{C})$ such that $g\cdot (A,B) = (A_0,B_0)$. 
    By definition of the reduction covariant, $g^{-t}H_{A,B}(\bar{g})^{-1}$ is a diagonal matrix, with $i$th diagonal entry equal to the maximum of the absolute values of the $i$th diagonal entries of $A_0$ and $B_0$. Therefore $g^{-t}H_{A,B}(\bar{g})^{-1}$ is the diagonal matrix $(|c_0|,\dots,|c_0|)$, so $\det(H_{A,B}) = \det(g^{-t}H_{A,B}(\bar{g})^{-1}) = |c_0|^n = |c|$.
\end{proof}
    We remark that the reduction covariant given above is not unique. For example, for either choice of $k = 1, 2$ we could carry out the analogue of Construction \ref{con_reduction_covariant} with the formula
    \begin{align*}
    (b_i,b_j)_{H_k} = 
    \begin{cases}
        |(b_i,b_i)_k| & \text{ if } i=j, \\
        0 & \text{ if } i\neq j,
    \end{cases}
\end{align*}
    to obtain a different $\SL_n(\R)$-equivariant reduction covariant $\mathcal{R}_k : V(\R)^{\Delta \neq 0} \to X$. Computation suggests that carrying out reduction theory using $\mathcal{R}_1$ will make the coefficients of $A$ as small as possible (perhaps at the cost of making the coefficients of $B$ large), and vice versa for $\mathcal{R}_2$, while performing reduction theory with $\mathcal{R}$ will tend to reduce the coefficients of both $A$ and $B$. Our choice of $\mathcal{R}$ is made here so that we can prove Lemma \ref{lem_reduction_covariant_constant_on_section}.

\section{Construction of rational and integral orbits}\label{sec_integral_orbit_representatives}

We now connect the representation of $\SL_n$ on $V$ to divisors on hyperelliptic curves.
Let $A$ be a PID of fraction field $K$ of characteristic 0, let $g \geq 1$, and let $n = 2g + 2$. Let 
\[ f(x, y) = f_0 x^n + f_1 x^{n-1} y + \dots + f_n y^n \in A[x, y] \]
be a homogeneous form of degree $n$ such that the discriminant $\Delta(f) \in A$ is nonzero. Let $\pi : X \to \mathbb{P}^1_A$ denote the double cover described by the equation $z^2 = f(x, y)$, and $\iota : X \to X$ the associated involution. Then $X_K$ is a smooth hyperelliptic curve. We write $j : S_f \hookrightarrow X$ for the closed subscheme defined by the equation $z = 0$. As suggested by the notation, $S_f$ may be identified with the subscheme of $\bbP^1_A$ defined by the vanishing of $f \in H^0(\bbP^1_A, \cO_{\bbP^1_A}(n))$, studied in \S \ref{sec_binary_forms}. We set $R_f = H^0(S_f, \cO_{S_f})$ and $L_f = R_f \otimes_A K$. 

Let $D_K \leq X_K$ be an effective divisor of degree $2g-1$, not intersecting $S_{f, K}$. We can write $D_{\overline{K}} = \sum_{i=1}^k (\alpha_i : 1 : \beta_i) + \sum_{i=k+1}^{2g-1} (1 : 0 : \beta_i)$ for some numbers $\beta_i \in \overline{K}$. We set $w = \prod_{i=1}^{2g-1} \beta_i \in K$.

Let $U(x, y) = u_0 x^{2g-1} + \dots + u_{2g-1} y^{2g-1} \in A[x, y]$ be a primitive form of degree $2g-1$ vanishing precisely at the points of $\pi(D_{\overline{K}})$ (with multiplicity). Then $U$ is determined up to multiplication by $A^\times$. In this section, we will prove the following theorem.
\begin{theorem}\label{thm_existence_of_integral_representatives}
    Suppose that $f_0 \neq 0$, let $X = x / y$, and let $\alpha = U(X, 1) \text{ mod }f(X, 1) \in L_f = R_f \otimes_A K$. Let $w = \prod_{i=1}^{2g-1} \beta_i$. Then:
    \begin{enumerate}
        \item The pair $(\alpha, u_k^{-(g+1)} f_0^{n-1} w^{-1})$ corresponds, under the bijection of Corollary \ref{cor_base_point_description_of_rational_orbits}, to an  orbit in $\SL_n(K) \backslash V_f(K)$.
        \item Furthermore, this orbit intersects $V_f(A)$ (i.e.\ has an integral representative).
    \end{enumerate}
\end{theorem}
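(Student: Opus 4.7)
The plan is to establish (1) by direct verification against Corollary \ref{cor_base_point_description_of_rational_orbits}, and to produce the integral representative required for (2) via the sheaf-theoretic machinery of Proposition \ref{prop_pencil_of_quadrics_over_A} applied to $\cM = j^*\cO_X(\overline{D})$, where $\overline{D}$ is the scheme-theoretic closure of $D_K$ in the integral model $X$.

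For (1) I would first check $\alpha \in L_f^\times$: this reduces to the claim that $U(X,1)$ and $f(X,1)$ are coprime in $K[X]$, which follows from the assumption $D_K \cap S_{f,K} = \emptyset$ together with $f_0 \neq 0$ (so that all roots of $f(X,1)$ are finite). The norm identity $z^2 \mathbf{N}_{L_f/K}(\alpha) = f_0^{n+1}$ I would verify by a direct calculation: write $\mathbf{N}_{L_f/K}(\alpha) = \prod_\gamma U(\gamma,1)$ as a product over roots $\gamma$ of $f(X,1)$, reinterpret this product as a resultant of $U(X,1)$ and $f(X,1)$, and then invoke the identities $\beta_i^2 = f(\alpha_i,1)$ at the finite points $(\alpha_i:1:\beta_i)$ of $D_{\overline{K}}$ together with $\beta_i^2 = f_0$ at the points $(1:0:\beta_i)$ (both coming from $z^2 = f(x,y)$) to convert this resultant into the expected expression in $w$, $u_k$ and $f_0$.

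For (2), the core step is to verify that $\cM = j^*\cO_X(\overline{D})$ satisfies the three hypotheses preceding Proposition \ref{prop_pencil_of_quadrics_over_A}. The rank-1 condition at each generic point of $S_f$ is immediate from $D_K \cap S_{f,K} = \emptyset$. The self-duality $\cM \cong \ShHom_{\cO_{S_f}}(\cM, \cO_{S_f}(n-3))$ I would deduce from an isomorphism $\cM^{\otimes 2} \cong \cO_{S_f}(2g-1) = \cO_{S_f}(n-3)$, obtained from the linear equivalence $\overline{D} + \iota^*\overline{D} \sim \pi^* V(U)$ on $X$ together with the identity $\iota \circ j = j$ (which gives $j^*\iota^*\cO_X(\overline{D}) \cong j^*\cO_X(\overline{D})$). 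For the vanishing $H^1(S_f, \cM) = 0$ I would use a degree/twist argument or, when $f$ is primitive, the affineness of $S_f$ from Proposition \ref{prop_primitive_implies_weierstrass_locus_is_affine}. Proposition \ref{prop_pencil_of_quadrics_over_A} then yields a pair $(A_y, A_x) \in V(A)$ with $\det(x A_y - y A_x) = u f(x,y)$ for some $u \in A^\times$, which Lemma \ref{lem_application_of_integral_lattice} upgrades (after rescaling one basis vector) to an element of $V_f(A)$. To conclude, I would check that the associated rational orbit matches $(\alpha, z)$: trivializing $M_K = H^0(S_f, \cM) \otimes_A K$ as a rank-1 $L_f$-module via the canonical section of $\cO_X(\overline{D})$, the pairing $\varphi$ becomes multiplication by $U$ up to a scalar, and Lemma \ref{lem_computation_of_zeta} together with the computation in the proof of Corollary \ref{cor_base_point_description_of_rational_orbits} pins this scalar down as precisely $u_k^{-(g+1)} f_0^{n-1} w^{-1}$.

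The main obstacle I anticipate is the behaviour of $\cM$ over special fibres of $\Spec A$, where $\overline{D}$ may meet $S_f$ and $\cM$ can fail to be invertible; there one has to work with rank-1 reflexive torsion-free sheaves and recover the self-duality and the linear equivalence $\overline{D} + \iota^*\overline{D} \sim \pi^* V(U)$ at the integral level, where the primitivity of $U$ enters essentially. Once these technical points are dealt with, the matching with (1) and the integrality conclusion reduce to a routine unwinding of Corollary \ref{cor_base_point_description_of_rational_orbits} and Lemma \ref{lem_application_of_integral_lattice}.
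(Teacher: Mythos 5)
Your overall strategy matches the paper's exactly: compute the norm directly for (1), and for (2) take $\cM = j^*\cO_X(D)$ with $\cO_X(D)$ the reflexive sheaf extending $\cO_{X_K}(D_K)$, verify the three hypotheses of Proposition~\ref{prop_pencil_of_quadrics_over_A}, and finish via Lemma~\ref{lem_application_of_integral_lattice}. Part (1) is fine as written. For (2), however, two of the steps you flag as anticipated ``technical points'' are in fact the load-bearing parts of the argument and are not resolved by the hints you give.

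First, the self-duality $\cM \cong \ShHom_{\cO_{S_f}}(\cM, \cO_{S_f}(n-3))$ cannot be deduced from an isomorphism $\cM^{\otimes 2} \cong \cO_{S_f}(n-3)$, because over the special fibres $\cM$ need not be invertible, and for a non-invertible coherent sheaf $\cM^{\otimes 2}$ and $\ShHom(\cM, \cO_{S_f}(n-3))$ need not agree. What the paper actually proves (Lemma~\ref{lem_pullback_of_pairing_to_Weierstrass_locus}) is that the natural map $j^*\ShHom_{\cO_X}(\cO_X(\iota^*D), \cO_X(2g-1)) \to \ShHom_{\cO_{S_f}}(j^*\cO_X(\iota^*D), j^*\cO_X(2g-1))$ is an isomorphism, which comes down to the vanishing of a local $\Ext^1$ and hence to the Cohen--Macaulayness of $\cO_X(\iota^*D)_p$; the latter is supplied by reflexivity ($S_2$) on a $2$-dimensional scheme. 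Second, and more seriously, the vanishing $H^1(S_f,\cM)=0$ in the non-primitive case is not a ``degree/twist argument''. In that case $S_f$ contains $\bbP^1_k$, and the paper reduces to showing $H^1(X_k, \cL)=0$ for $\cL = \cO_X(D)|_{X_k}$ on the hyperelliptic ribbon $X_k$; writing $\pi_*\cL \cong \cO(i)\oplus\cO(j)$ with $i+j=g-2$, one must rule out $i \leq -2$, which requires analysing the matrix of multiplication by $z$ and deriving a contradiction from the fact that the tautological section $1$ generates $\cL$ at the generic point of $X_k$. This lemma is the key obstruction to the whole construction and needs a genuine argument of that shape, not just a twist bound. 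You should also be careful with the very definition of $\cO_X(D)$: it is not literally the sheaf of the Zariski closure of $D_K$, but rather the unique reflexive coherent sheaf on $X$ restricting to $\cO_V(D_V)$ on $V = X - Z$ (with $Z$ a codimension-$2$ bad locus), via \cite[Theorem 1.12]{Har92}; the extension of the pairing (\ref{eqn_pairing_of_sheaves_on_X}) across $Z$ is then automatic from reflexivity of $\ShHom$.
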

After the proof, we will give a more precise version of Theorem \ref{thm_existence_of_integral_representatives} (see Theorem \ref{thm_tautological_section_is_saturated}), which is what we will use in our applications. The rational orbit given by Theorem \ref{thm_existence_of_integral_representatives} could depend on the choice of $U$, although its orbit under the group $(\SL_n / \mu_2)(K)$ (through which the action of $\SL_n(K)$ factors) is insensitive to this choice. 

We begin by proving the first part of Theorem \ref{thm_existence_of_integral_representatives}, which is a computation. Let $\omega_1, \dots, \omega_n \in \overline{K}$ be the roots of $f(X, 1)$. Then $U(X, 1) = u_k \prod_{i=1}^{k} (X - \alpha_i)$ and $f(X, 1) = f_0 \prod_{j=1}^n (X - \omega_j)$. Since $f_0 \neq 0$, we have $w \in K^\times$, and $w^2 = f_0^{n-3-k} \prod_{i=1}^k \beta_i^2$. 

We have 
\begin{multline*}  \mathbf{N}_{L_f / K}(\alpha) = \prod_{j=1}^n U(\omega_j, 1) = u_k^{n} \prod_{i=1}^{k} \prod_{j=1}^n (\omega_j - \alpha_i) \\
=  u_k^{n} \prod_{i=1}^{k} \prod_{j=1}^n (\alpha_i - \omega_j) = u_k^n f_0^{-k} \prod_{i=1}^{k} f(\alpha_i, 1) = u_k^{n} f_0^{-k} \prod_{i=1}^{k} \beta_i^2 = u_k^n w^2 f_0^{3-n}.
\end{multline*} 
It follows that $(u_k^{-(g+1)} w^{-1} f_0^{n-1})^2 \mathbf{N}_{L_f / K}(\alpha) = f_0^{n+1}$. Looking at the statement of Corollary \ref{cor_base_point_description_of_rational_orbits}, we see that we have indeed obtained an orbit in $\SL_n(K) \backslash V_f(K)$. In order to show that this orbit admits an integral representative, we first give a geometric construction for its corresponding $f$-module (in the sense of \S \ref{subsec_rational_orbits}), in a way similar to \cite{Tho14, lagathorne2024smallheightoddhyperelliptic}. Let $M_K = H^0(S_{f, K}, j_K^\ast \cO_{X_K}(D_K))$. Then $M_K$ is a free $L_f$-module of rank 1. There is an isomorphism of sheaves 
\begin{equation}\label{eqn_isomorphism_of_line_bundles_in_generic_fibre} \cO_{X_K}(D_K) \otimes_{\cO_{X_K}} \cO_{X_K}(\iota^\ast D_K) \cong \cO_{X_K}(2g-1) 
\end{equation}
given by $a \otimes b \mapsto a b U$. After pullback along $j_K^\ast : S_{f, K} \hookrightarrow X_K$, this gives (using the identity $\iota \circ j = j$) an isomorphism of sheaves
\[ j_K^\ast \cO_{X_K}(D_K) \otimes_{\cO_{S_{f, K}}} j_K^\ast \cO_{X_K}(D_K) \cong j_K^\ast \cO_{X_K}(2g-1), \]
hence an isomorphism
\[ M_K \cong \Hom_{L_f}(M_K, I_f \otimes_A K) \]
of $L_f$-modules, which is determined up to $A^\times$-multiple by $D_K$ (and determined completely by $D_K$ and the choice of $U(x, y)$). Let $\varphi_K : M_K \otimes_{L_f} M_K \to I_f \otimes_A K$ denote the corresponding isomorphism. The first part of Theorem \ref{thm_existence_of_integral_representatives} says that $(M_K, \varphi_K, e)$ is an $f$-module, where $e \in \wedge^n_K M_K$ is defined by the formula
\[ e = u_0^{-(g+1)} f_0^{g} w^{-1} \cdot ( \overline{1} \wedge X \overline{1} \wedge \dots \wedge X^{n-1} \overline{1}), \]
and $\overline{1} \in M_K$ is the image of the tautological section $1 \in H^0(X_K, \cO_{X_K}(D_K))$ under restriction. Indeed, our hypothesis that $D_K$ and $S_f$ do not intersect implies that $\overline{1}$ is an $L_f$-module generator for $M_K$.

We now proceed to construct an $A$-lattice $M \leq M_K$. We will rely heavily on the fact that $X$ is an integral local complete intersection of dimension 2. Continuing to take $D_K$ to be an effective divisor in $X_K$ of degree $2g-1$, let us take $D$ to be its Zariski closure in $X$, a closed subscheme of $X$ which is $A$-flat of degree $2g-1$. Let $Z \subset X$ denote the intersection of the subscheme cut out by $U \in H^0(X, \cO_X(2g-1))$ with the finitely many fibres of $X \to \Spec A$ above points where $\Delta(f)$ is not a unit. Then $Z$ has codimension 2; let $V = X - Z$. Then $D_V = D \cap V$ is a Cartier divisor in $V$, and (by \cite[Theorem 1.12]{Har92}) the restriction functor $\mathrm{Refl}(X) \to \mathrm{Refl}(V)$ between the respective categories of reflexive coherent sheaves is an equivalence. 
Consequently, there is a unique (up to unique isomorphism) reflexive coherent sheaf, that we call $\cO_X(D)$, on $X$, whose restriction to $V$ is isomorphic to $\cO_V(D_V)$. Moreover, since $D_V + \iota^\ast D_V = (U)_V$ as Cartier divisors, the isomorphism $\cO_{X_K}(D_K) \cong \ShHom_{\cO_{X_K}}(\cO_{X_K}(\iota^\ast D), \cO_{X_K}(2g-1))$ extends to an isomorphism 
\begin{equation}\label{eqn_pairing_of_sheaves_on_X} \cO_X(D) \cong \ShHom_{\cO_X}(\cO_X(\iota^\ast D), \cO_X(2g-1)) 
\end{equation}
of reflexive coherent sheaves of $\cO_X$-modules (here we use that the sheaf Hom is itself reflexive, by \cite[Corollary 1.18]{Har92}). 

Let $\mathcal{M} = j^\ast \cO_X(D)$. We claim that $\mathcal{M}$ satisfies the hypotheses of Proposition \ref{prop_pencil_of_quadrics_over_A}. Combining the result of this proposition with Lemma \ref{lem_application_of_integral_lattice} will imply the truth of Theorem \ref{thm_existence_of_integral_representatives}. These hypotheses be checked locally on $\Spec A$, so we now assume that $A$ is a DVR of uniformizer $\varpi$ and residue field $k = A / (\varpi)$. 
\begin{lemma}
    $\cM_\eta$ is free of rank 1 at each generic point $\eta \in S_f$.
\end{lemma}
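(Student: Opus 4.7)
The plan is to analyse the generic points of $S_f$ case-by-case and show in each case that, under the closed embedding $j$, they map to points of $X$ lying in the open $V$ where $\cO_X(D)$ coincides with the line bundle $\cO_V(D_V)$, and moreover avoid $D$ itself. Once $\eta' := j(\eta)$ is known to lie in $V \setminus D$, we get $\cO_X(D)_{\eta'} \cong \cO_{V,\eta'} = \cO_{X,\eta'}$, and tensoring with $\cO_{S_f,\eta}$ gives $\cM_\eta \cong \cO_{S_f,\eta}$.

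Since $A$ is a DVR of uniformiser $\varpi$ and $S_f = V(f) \subset \bbP^1_A$, the generic points of $S_f$ fall into two types:
\textbf{(A)} points lying in the generic fibre $S_{f,K}$, which is an \'etale $K$-algebra of rank $n$;
\textbf{(B)} the generic point of the component $\bbP^1_k \subset S_f$, present precisely when $f$ has positive $\varpi$-adic content (so that $\overline{f} = 0$ in $k[x,y]$).

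For type (A), $\eta'$ lies in $X_K$, so $\eta' \notin X_k \supset Z$, placing $\eta'$ in $V$. The hypothesis that $D_K$ does not meet $S_{f,K}$, combined with $D \cap X_K = D_K$, gives $\eta' \notin D$. For type (B), the image of $\eta'$ in $\bbP^1_A$ is the generic point of $\bbP^1_k$, so $\overline{\{\eta'\}}$ is a one-dimensional subscheme of $X_k$. The closure $D$ of the zero-dimensional $D_K$ is $A$-flat of relative dimension zero, so the fibre $D_k$ is zero-dimensional and $D$ cannot contain any one-dimensional subscheme of $X_k$; hence $\eta' \notin D$. Moreover, primitivity of $U$ means that $\overline{U} \neq 0$ in $k[x,y]$, so the generic point of $\bbP^1_k$ does not lie in $V(\overline{U})$; since $Z = V(U) \cap X_k$ is the preimage under $\pi$ of $V(\overline{U}) \subset \bbP^1_k$, it follows that $\eta' \notin Z$, i.e.\ $\eta' \in V$.

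The main subtlety is case (B): one must combine flatness of $D$ over $A$ (to exclude $\eta' \in D$) with primitivity of $U$ (to exclude $\eta' \in Z$); neither ingredient alone suffices. Once both are in place, the conclusion in each case reduces to the trivial fact that a line bundle is locally free of rank one at any point outside its divisor of definition, and base change to $\cO_{S_f,\eta}$ preserves this. Case (A) is essentially formal from the disjointness hypothesis $D_K \cap S_{f,K} = \emptyset$.
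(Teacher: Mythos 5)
Your proof is correct and follows essentially the same approach as the paper: show that the generic points of $S_f$ all land in the open set $V = X - Z$, where $\cO_X(D)$ is locally free of rank $1$, using exactly the same case split (generic fibre vs.\ the $\bbP^1_k$ component when $f$ fails to be primitive) and the same key ingredient (primitivity of $U$, hence $\overline{U} \neq 0$, to ensure $\bbP^1_k \cap Z$ is finite). One remark: the step where you additionally verify $\eta' \notin D$ is superfluous. On $V$, the divisor $D_V$ is Cartier, so $\cO_X(D)|_V \cong \cO_V(D_V)$ is already a line bundle (locally free of rank $1$) on \emph{all} of $V$, not merely on $V \setminus D_V$; you do not need the stalk to be the structure sheaf itself, only to be free of rank $1$. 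Dropping that step gives exactly the paper's argument.
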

\begin{proof}
    By construction, $\cM$ is locally free of rank 1 after restriction to $V = X - Z$. It suffices to check that the generic points of $S_f$ all lie in $V$. Since $V$ contains $X_K$, it certainly contains all generic points of $S_f$ lying above the generic point of $\Spec A$. $S_f$ has generic points lying above $\Spec k$ if and only if $f$ is not primitive, in which case there is a generic point corresponding to the closed subscheme $\bbP^1_k$ of $S_f$. The intersection $\bbP^1_k \cap Z$ is finite (being contained in the zero set of $\overline{U}(x, y) \in k[x, y]$), so we see that we're done in this case too. 
\end{proof}
\begin{lemma}\label{lem_pullback_of_pairing_to_Weierstrass_locus}
    The isomorphism of Equation (\ref{eqn_pairing_of_sheaves_on_X}) determines, by pullback, an isomorphism
    \[ \mathcal{M} \cong \ShHom_{\cO_{S_f}}(\mathcal{M}, \cO_{S_f}(2g-1))  \]
    of sheaves of $\cO_{S_f}$-modules.
\end{lemma}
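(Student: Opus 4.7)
Plan: The key observation is that the hyperelliptic involution $\iota$, acting by $z \mapsto -z$, fixes the Weierstrass locus $S_f = \{z=0\}$ pointwise; in particular $\iota \circ j = j$ as morphisms of schemes. Consequently, for any coherent sheaf $\mathcal{F}$ on $X$ we have $j^\ast \iota^\ast \mathcal{F} \cong j^\ast \mathcal{F}$. Applied to $\mathcal{F} = \cO_X(D)$, together with the canonical identification $\iota^\ast \cO_X(D) \cong \cO_X(\iota^\ast D)$ (both sides are the unique reflexive extension to $X$ of their common restriction $\iota^\ast \cO_V(D_V) = \cO_V(\iota^\ast D_V)$ to $V$), this yields $j^\ast \cO_X(\iota^\ast D) \cong \cM$.

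I would then apply $j^\ast$ to the isomorphism (\ref{eqn_pairing_of_sheaves_on_X}) and compose with the natural morphism
\[ \alpha \colon j^\ast \ShHom_{\cO_X}(\cO_X(\iota^\ast D), \cO_X(2g-1)) \to \ShHom_{\cO_{S_f}}(\cM, \cO_{S_f}(2g-1)) \]
to obtain the desired morphism $\cM \to \ShHom_{\cO_{S_f}}(\cM, \cO_{S_f}(2g-1))$. The task reduces to verifying that $\alpha$ is an isomorphism.

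Since $\cO_X(2g-1)$ is a line bundle, $\alpha$ is tautologically an isomorphism at any point where $\cO_X(\iota^\ast D)$ is locally free, hence on the dense open $V \cap S_f \subset S_f$. For the remaining, it is convenient to reformulate in terms of the multiplication pairing underlying (\ref{eqn_pairing_of_sheaves_on_X}): its $j^\ast$-pullback gives a pairing $\cM \otimes_{\cO_{S_f}} \cM \to \cO_{S_f}(2g-1)$ that is an iso on $V \cap S_f$. Once this pulled-back pairing is shown to be an iso globally, $\cM$ is forced to be an invertible $\cO_{S_f}$-module by a Nakayama-type argument over local rings: any finitely generated $R$-module $M$ with $M \otimes_R M \cong R$ is generated by a single element $m$ (by Nakayama applied to $M/\mathfrak{m}M$, whose dimension squares to $1$), and the annihilator of $m \otimes m$ must vanish, forcing $M \cong R$. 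Invertibility then gives the desired isomorphism formally:
\[ \ShHom_{\cO_{S_f}}(\cM, \cO_{S_f}(2g-1)) \cong \cM^{-1} \otimes_{\cO_{S_f}} \cO_{S_f}(2g-1) \cong \cM, \]
the second iso being the pairing itself.

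The main obstacle is establishing the pairing iso at the finitely many closed points of $S_f \cap Z$. I would work locally on $X$: at such a point, $\cO_X$ is a two-dimensional Gorenstein local ring (the curve $X$ being a hypersurface in a smooth ambient that misses the singular point of the weighted projective space), and $z$ is a nonzerodivisor on the torsion-free reflexive sheaf $\cO_X(\iota^\ast D)$, giving $\mathrm{Tor}^1_{\cO_X}(\cO_X(\iota^\ast D), \cO_{S_f}) = 0$. Combined with the local freeness of $\cO_X(2g-1)$, this $\mathrm{Tor}$-vanishing suffices to descend the iso (\ref{eqn_pairing_of_sheaves_on_X}) on $X$ to an iso of its pullback on $S_f$, completing the proof.
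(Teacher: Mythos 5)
Your reduction to showing that the natural comparison map
\[
\alpha \colon j^\ast \ShHom_{\cO_X}(\cO_X(\iota^\ast D), \cO_X(2g-1)) \to \ShHom_{\cO_{S_f}}(\cM, \cO_{S_f}(2g-1))
\]
is an isomorphism, using $\iota\circ j = j$, matches the paper's starting point. However, the final step contains a genuine gap. You claim that $z$ being a nonzerodivisor on $\cO_X(\iota^\ast D)$, i.e.\ $\mathrm{Tor}^1_{\cO_X}(\cO_X(\iota^\ast D), \cO_{S_f}) = 0$, suffices to make $\alpha$ an isomorphism. It does not: $\mathrm{Tor}$-vanishing governs the right-exactness of $-\otimes\cO_{S_f}$, whereas $\alpha$ concerns the behaviour of $\ShHom$ under pullback, which is controlled by $\Ext$. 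Concretely, if $R = \cO_{X,p}$, $u$ is the local equation of $S_f$, and $M = \cO_X(\iota^\ast D)_p$, then applying $\Hom_R(M,-)$ to $0\to R\xrightarrow{\times u} R \to R/u\to 0$ shows that the cokernel of $\alpha_p$ is the $u$-torsion in $\Ext^1_R(M,R)$. Torsion-freeness of $M$ gives no control on this group. A counterexample: with $R=k[[x,y]]$, $M=(x,y)$, $u=x$, the module $M$ is torsion-free (so $\mathrm{Tor}^1_R(M,R/u)=0$), yet the comparison map $\Hom_R(M,R)/u \to \Hom_{R/u}(M/uM,R/u)$ is multiplication by $y$ on $k[[y]]$, which is not surjective; here $\Ext^1_R(M,R)=k$ is pure $u$-torsion. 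What rescues the situation in the paper is not torsion-freeness but reflexivity: over the Gorenstein local ring $\cO_{X,p}$ of dimension $2$, a reflexive module is $S_2$ hence Cohen--Macaulay (Hartshorne, \emph{Generalized divisors on Gorenstein schemes}, Theorem~1.9), and Cohen--Macaulay modules over a Gorenstein ring satisfy $\Ext^1_R(M,R)=0$ (loc.\ cit., Corollary~1.3). You mention that the sheaf is reflexive but do not use this property where it is actually needed.

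A secondary issue: the detour through the statement ``the pairing $\cM\otimes_{\cO_{S_f}}\cM\to\cO_{S_f}(2g-1)$ is an isomorphism, hence $\cM$ is invertible'' both conflates two distinct conditions and is logically redundant. The lemma asserts only that the \emph{adjoint} map $\cM\to\ShHom_{\cO_{S_f}}(\cM,\cO_{S_f}(2g-1))$ is an isomorphism, which is strictly weaker than the tensor pairing being an isomorphism (the latter forces $\cM$ to be invertible, which is not claimed and need not hold when $S_f$ is singular). Moreover, once one knows $\alpha$ is an isomorphism, the lemma follows immediately; the Nakayama argument adds nothing. I recommend dropping this detour and replacing the $\mathrm{Tor}$ step with the $\Ext^1$-vanishing argument sketched above.
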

\begin{proof}
    We need to show that the natural map
    \[ j^\ast \ShHom_{\cO_X}(\cO_X(\iota^\ast D), \cO_X(2g-1)) \to \ShHom_{\cO_{S_f}}( j^\ast \cO_X(\iota^\ast D), j^\ast \cO_X(2g-1)) \]
    is an isomorphism. We can work on stalks. Let $p \in S_f$, and let $u$ be a local equation for $S_f$ at $p$. Applying the functor $\Hom_{\cO_{X, p}}(\cO_X(\iota^\ast D)_p, -)$ to the short exact sequence
    \[ 0 \to \cO_{X, p} \overset{\times u}{\to} \cO_{X, p} \to \cO_{S_f, p} \to 0, \]
    we see that it is enough to know that $\Ext^1_{\cO_{X, p}}(\cO_X(\iota^\ast D)_p, \cO_{X, p}) = 0$. This will follow from \cite[Corollary 1.3]{Har92} if we can check that $\cO_X(\iota^\ast D)_p$ is Cohen--Macaulay. However, \cite[Theorem 1.9]{Har92} shows that since $\cO_X(\iota^\ast D)_p$ is reflexive, it is $S_2$, hence Cohen--Macaulay. 
\end{proof}
\begin{lemma}\label{lem_vanishing_cohomology}
    $H^1(S_f, \mathcal{M}) = 0$.
\end{lemma}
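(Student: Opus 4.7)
The plan is to first localize the question: since cohomology of a coherent sheaf on a proper $A$-scheme commutes with flat base change on $A$, I may assume $A$ is a DVR with uniformizer $\varpi$ and residue field $k$. If $f$ is primitive, then $S_f$ is affine by Proposition \ref{prop_primitive_implies_weierstrass_locus_is_affine} and the vanishing is automatic. Therefore the real content lies in the non-primitive case $f = \varpi^m g$ with $g \in A[x, y]$ primitive and $m \geq 1$, in which $(S_f)_k$ contains $\mathbb{P}^1_k$ as an irreducible component and $S_f$ has relative dimension $1$ above the closed point of $\Spec A$.

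My first move is to observe that $H^1(S_f, \mathcal{M})$ is $\varpi$-torsion. Indeed the generic fibre $S_{f, K}$ is a finite \'etale $K$-scheme, so $H^1(S_{f,K}, \mathcal{M}_K) = 0$. Combining this with Grothendieck-Serre duality is natural: the computation $\pi^! A \cong \cO_{S_f}(n-2)$ already carried out in the proof of Proposition \ref{prop_application_of_global_duality}, together with the self-duality $\mathcal{M} \cong \ShHom_{\cO_{S_f}}(\mathcal{M}, \cO_{S_f}(n-3))$ from Lemma \ref{lem_pullback_of_pairing_to_Weierstrass_locus} and the vanishing of $\mathcal{E}\mathit{xt}^1_{\cO_{S_f}}(\mathcal{M}, \cO_{S_f}(n-3))$ used in the proof of Proposition \ref{prop_pencil_of_quadrics_over_A}, yields the quasi-isomorphism
\[ R\pi_\ast \mathcal{M}(1) \cong R\Hom_A(R\pi_\ast \mathcal{M}, A). \]
Reading off cohomology in degree $-1$ of both sides recovers $\Hom_A(H^1(S_f, \mathcal{M}), A) = 0$, which reconfirms the torsion property, but by itself does not give outright vanishing.

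To upgrade from torsion to zero, I would pass to the special fibre via Nakayama and analyze $\mathcal{M}$ componentwise on $(S_f)_k$. The irreducible components other than $\mathbb{P}^1_k$ are finite over $k$ and contribute no $H^1$. On the $\mathbb{P}^1_k$ component, $\mathcal{M}$ is generically of rank $1$ and (being reflexive on $S_f$, hence locally free of rank $1$ on the smooth locus of the component) restricts to an invertible sheaf, which is isomorphic to $\cO_{\mathbb{P}^1_k}(d)$ for some integer $d$. I would then argue that $d \geq 0$ using the tautological section $\overline{1} \in H^0(S_f, \mathcal{M})$ obtained by restricting $1 \in H^0(X, \cO_X(D))$: since $D_K \cap S_{f, K} = \emptyset$, the scheme-theoretic intersection $D \cap S_f$ is supported on the closed fibre and finite, so $\overline{1}$ is not identically zero on $\mathbb{P}^1_k$. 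Thus $H^1(\mathbb{P}^1_k, \mathcal{M}|_{\mathbb{P}^1_k}) = 0$.

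The main obstacle I expect is not the componentwise vanishing but the gluing: $(S_f)_k$ is typically non-reduced and its components meet at a finite set of points where $\mathcal{M}$ may fail to be locally free, and a Mayer--Vietoris-type or \v{C}ech argument is needed to promote the vanishing on each component to the vanishing of $H^1(S_f, \mathcal{M})$ itself. Concretely, I would set up a short exact sequence relating $\mathcal{M}$ to its restrictions to a suitable affine cover (or to the normalization), carefully bookkeeping the contributions from the finitely many singular points, and then invoke Nakayama once more to descend from $k$ back to $A$. The delicate point is controlling the local structure of $\mathcal{M}$ at the intersection points, where the self-duality and the $\mathcal{E}\mathit{xt}$-vanishing already available should play a decisive role.
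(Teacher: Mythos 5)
Your approach genuinely differs from the paper's, which does not stay on $S_f$: the paper uses the exact sequence
\[
0 \to \cO_X(D)(-(g+1)) \overset{\times z}{\to} \cO_X(D) \to \cM \to 0
\]
on the hyperelliptic surface $X$ to reduce to $H^1(X, \cO_X(D)) = 0$, then passes to the special fibre $X_k$ (which is a ribbon over $\P^1_k$), pushes $\mathcal{L} = \cO_X(D)|_{X_k}$ down to $\P^1_k$ to obtain a rank-two bundle $\cO(i) \oplus \cO(j)$ with $i+j = g-2$, and rules out $i \leq -2$ by exploiting the matrix of multiplication by $z$ together with the tautological section. The advantage of going through $X$ is that $\cO_X(D)$ is CM of dimension two and hence $A$-flat, so cohomology and base change applies cleanly.

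Your plan of working directly on $S_f$ could in principle be carried through, but there is a misconception underlying the obstacle you flag, and as written the proof has a genuine gap at exactly the step you defer. When $f = \varpi^m g$ with $m \geq 1$ and $g$ primitive, one has $\bar f \equiv 0$ in $k[x,y]$, so the special fibre $(S_f)_k$ is simply $\P^1_k$ --- reduced, irreducible and smooth. There are no extra components of $(S_f)_k$ to glue, and no Mayer--Vietoris or normalization argument is needed; what you are worried about does not actually arise. The real issues are different. First, $\cM$ is \emph{not} $A$-flat (at the generic point $\eta$ of $\P^1_k$, $\cM_\eta$ is free over $\cO_{S_f,\eta} = \cO_{\P^1_A,\eta}/(\varpi^m)$, which is $\varpi$-torsion), so you cannot directly invoke cohomology and base change. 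One can still conclude: breaking $\cM \overset{\varpi}{\to} \cM \to \cM/\varpi\cM \to 0$ into two short exact sequences and using $H^2 = 0$ throughout (fibre dimension $\leq 1$ and $A$ affine) shows that $H^1(S_f, \cM/\varpi\cM) = 0$ forces $H^1(S_f,\cM) = \varpi H^1(S_f,\cM)$, and Nakayama finishes. Second, you must then prove $H^1(\P^1_k, \cM/\varpi\cM) = 0$: the sheaf $\cM/\varpi\cM$ has generic rank one, so its torsion-free quotient is $\cO_{\P^1_k}(d)$, and you need $d \geq -1$. Your tautological-section argument is the right idea, but must be pinned down: the generic point $\xi$ of $(X_k)_{\mathrm{red}}$ lies outside the finite bad locus $Z$, so $\cO_X(D)_\xi$ is free of rank one and $1$ generates it (since $D$ is the closure of a finite $D_K$ and so does not contain $\xi$); hence $\bar 1$ generates $\cM_\eta$, hence $\bar 1 \bmod \varpi$ is nonzero at the generic point of $\P^1_k$, so $d \geq 0$. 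You do not carry out any of this, and instead defer to a gluing argument that does not exist; so the proposal, as written, does not constitute a proof.
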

\begin{proof}
    If $f$ is primitive, then $S_f$ is affine, so there is nothing to prove. We therefore assume that $f(x, y) = \varpi^m g(x, y)$ for some $m \geq 1$, where $g(x, y) \in A[x, y]$ is primitive. 
    Taking in hand the short exact sequence
    \[ 0 \to \cO_X(D)(-(g+1)) \overset{\times z}{\to} \cO_X(D) \to \mathcal{M} \to 0, \]
    we see that it is enough to show that $H^1(X, \cO_X(D)) = 0$. 
    By cohomology and base change, it is even enough to show that $H^1(X_k, \mathcal{L}) = 0$, where $\mathcal{L} = \cO_X(D)|_{X_k}$.
    The curve $X_k$ is the genus $g$ `hyperelliptic ribbon' \cite[\S1]{BayerEisenbud-ribbons} given by the equation $z^2 = 0$, where $z \in \cO_{\bbP_k^1}(-(g+1))$. Writing $\pi : X_k \to \bbP^1_k$ for the double cover, $\pi_\ast \mathcal{L}$ is a torsion free coherent sheaf, generically of rank 2, therefore a locally free sheaf of rank 2. (We justify the assertion that $\pi_\ast \mathcal{L}$ is torsion-free. By \cite[\href{https://stacks.math.columbia.edu/tag/0AUV}{Lemma 0AUV}]{stacks-project}, it is equivalent to show that $(0)$ is the unique associated prime of each stalk of $\pi_\ast \mathcal{L}$. By \cite[\href{https://stacks.math.columbia.edu/tag/05DZ}{Lemma 05DZ}]{stacks-project}, it is equivalent to show that the unique associated prime of each stalk of $\mathcal{L}$ is the unique minimal prime. The sheaf $\mathcal{L}$ is $S_1$, since $\cO_X(D)$ is $S_2$ and $\varpi$ is a non-zero divisor in each stalk of $\cO_X(D)$.
    The desired property therefore follows from \cite[\href{https://stacks.math.columbia.edu/tag/031Q}{Lemma 031Q}]{stacks-project}.) Since every vector bundle on $\bbP^1_k$ splits, we can find an isomorphism $\pi_\ast \mathcal{L} \cong \cO(i) \oplus \cO(j)$, where $i \leq j$ and (by considering the Euler characteristic) we have $i + j = g-2$. We wish to show that $H^1(\bbP^1_k, \pi_\ast \mathcal{L}) = 0$, or equivalently that $i \geq - 1$. 

    Multiplication by $z$ gives a morphism $\pi_\ast \mathcal{L} \to \cO_{\P^1_k}(g+1) \otimes \pi_\ast \mathcal{L}$, which can be represented as a matrix
    \[ z = \left( \begin{array}{cc} R & S \\ T & Q \end{array}\right) \in H^0\left( \begin{array}{cc} \cO_{\P^1_k}(g+1) & \cO_{\P^1_k}(i-j+g+1) \\ \cO_{\P^1_k}(j-i+g+1) & \cO_{\P^1_k}(g+1) \end{array}\right). \]
    Since $i + j = g-2$, we have $i-j+g+1 = 2i+3$. 

    Suppose for contradiction that $i \leq -2$. Then $2i+3 < 0$, so $S = 0$. We can then calculate
    \[ z^2 = \left( \begin{array}{cc} R^2 & 0 \\ (R+Q)T & Q^2 \end{array}\right) = 0, \]
    hence $R = Q = 0$. Since $H^0(X_k, \mathcal{L}) = H^0(\bbP^1_k, 0 \oplus \cO(j))$, the matrix equality $z = \begin{psmallmatrix} 0 & 0 \\ T & 0 \end{psmallmatrix}$ shows that $z$ annihilates $H^0(X_k, \mathcal{L})$. 
    
    Let $\overline{\eta}$ denote the generic point of $X_k$. Then $\mathcal{L}_{\overline{\eta}}$ is free of rank 1 over $\cO_{X_k, \overline{\eta}}$ (since $\cO_X(D)$ is locally free in a neighbourhood of $\overline{\eta}$), and generated by the image of the global section $1 \in H^0(X, \cO_X(D))$. Since $\mathcal{L}_{\overline{\eta}}$ is not annihilated by $z$, this is a contradiction.
\end{proof}
We have now completed the proof of Theorem \ref{thm_existence_of_integral_representatives}, and have in fact proved the following more precise statement:
\begin{theorem}\label{thm_tautological_section_is_saturated}
    With assumptions and notation as above, there exists an $A$-basis $\mathcal{B} = \{ b_1, \dots, b_n \}$ for $M$ with the following properties:
    \begin{enumerate}
        \item Let $e = b_1 \wedge \dots \wedge b_n$. Then $(M_K, \varphi_K, e)$ is an $f$-module, corresponding under the bijection of Corollary \ref{cor_base_point_description_of_rational_orbits} to the pair $(\alpha, u_k^{-(g+1)} f_0^{n-1} w^{-1}) \in L_f^\times \times K^\times$.
        \item The matrices of the symmetric bilinear forms $\mathrm{ev}_y \circ \varphi_K$, $\mathrm{ev}_x \circ \varphi_K$ with respect to $\mathcal{B}$ have coefficients in $A$, and satisfy $\disc_e(x \mathrm{ev}_y - y \mathrm{ev}_x) = f(x, y)$.
        \item Let $\overline{1} \in M$ denote the image of the section $1 \in H^0(X, \cO_X(D))$ in $M = H^0(S_f, j^\ast \cO_X(D))$. Then $\overline{1}$ is saturated, i.e.\ generates a saturated $A$-submodule of $M$. 
    \end{enumerate}
\end{theorem}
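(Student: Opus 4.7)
The plan is as follows. Parts 1 and 2 come from assembling what has already been proved: the explicit form of the orbit in part 1 was verified by direct calculation at the outset of the section, while for part 2 the three preceding lemmas verify the hypotheses of Proposition \ref{prop_pencil_of_quadrics_over_A} applied to $\mathcal{M} = j^\ast \cO_X(D)$. That proposition produces an $A$-basis of $M$ with respect to which the Gram matrices of $\mathrm{ev}_y \circ \varphi$ and $\mathrm{ev}_x \circ \varphi$ are integral and satisfy $\det(xA_y - yA_x) = u f(x,y)$ for some $u \in A^\times$. Lemma \ref{lem_application_of_integral_lattice} then lets me rescale a single basis vector by an element of $A^\times$ to obtain a basis $\mathcal{B}$ whose wedge product equals $e$, while preserving integrality of the pairings.

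For part 3 I would work locally, assuming $A$ is a DVR with uniformizer $\varpi$ and residue field $k$. Since $M$ is a finitely generated torsion-free (hence free) $A$-module, saturation of $\overline{1}$ is equivalent to the non-vanishing of its image $\overline{1}_k$ in $M \otimes_A k$. The vanishing $H^1(S_f, \mathcal{M}) = 0$ from Lemma \ref{lem_vanishing_cohomology}, combined with cohomology and base change, gives an isomorphism $M \otimes_A k \cong H^0(S_{f,k}, \mathcal{M}|_{S_{f,k}})$ under which $\overline{1}_k$ corresponds to the restriction to $S_{f,k}$ of the tautological section $1 \in H^0(X, \cO_X(D))$. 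It therefore suffices to show this restriction is not identically zero as a section of $\mathcal{M}|_{S_{f,k}}$.

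I would split into two cases. When $f$ is primitive modulo $\varpi$, the scheme $S_{f,k}$ is $0$-dimensional of length $n = 2g + 2$ over $k$, while $D$ is $A$-flat of relative length $2g-1$. A length count on closed subschemes of $V_k$ shows that $S_{f,k}$ cannot be contained scheme-theoretically in $D_k$, so $1$ cannot vanish identically on $S_{f,k}$. When $f$ is not primitive modulo $\varpi$, the scheme $S_{f,k}$ contains a copy of $\P^1_k$ as a component. On $V = X - Z$, the divisor $D_V$ is Cartier and $1|_V$ vanishes precisely on $D_V$; since $D$ is the Zariski closure of $D_K$, it has no component in the special fiber, so $D_{V,k}$ is $0$-dimensional. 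Consequently the restriction of $1$ to the $1$-dimensional open subset $\P^1_k \cap V$ of $S_{f,k}$ vanishes only on finitely many points and hence is non-zero.

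The main obstacle will be the non-primitive case: to make the geometric picture rigorous, one must identify $\mathcal{M}|_{\P^1_k \cap V}$ with the restriction of the invertible sheaf $\cO_V(D_V)$, track the tautological section through the reflexive extension from $V$ to $X$, and verify cohomology-and-base-change in a setting where $\mathcal{M}$ is only reflexive rather than locally free.
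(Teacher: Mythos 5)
Parts 1 and 2 of your proposal follow the paper exactly: the paper also disposes of them with "only the last part remains to be proved," since they are assembled from Propositions \ref{prop_pencil_of_quadrics_over_A}, \ref{prop_base_point_free_description_of_orbits} and Lemma \ref{lem_application_of_integral_lattice}.

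For part 3 you take a genuinely different, geometric route, and it has gaps beyond the one you flag. The paper's argument is a one-paragraph algebraic trick that exploits the quadratic nature of the pairing: after localising to a DVR, $\varphi(\overline{1}\otimes\overline{1}) = U(x,y) \in I_f$, and $U$ lies in the image of $H^0(\bbP^1_A,\cO_{\bbP^1_A}(2g-1))$, which by the short exact sequence \eqref{eqn_I_f} is a direct summand (hence saturated $A$-submodule) of $I_f$. Since $U$ is primitive, $U$ is therefore saturated in $I_f$; but if $\overline{1}\in\varpi M$, then $U=\varphi(\overline{1}\otimes\overline{1})\in\varpi^2 I_f$, a contradiction. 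This avoids everything your approach needs to establish: cohomology-and-base-change requires $\mathcal{M}$ to be $A$-flat (or at least $\varpi$-torsion-free), which is not established in the paper and is not immediate because $\mathcal{M}=j^\ast\cO_X(D)$ is a restriction of a reflexive-but-not-locally-free sheaf to a possibly non-reduced subscheme; in your "primitive" case, the length count implicitly assumes the vanishing locus of $1$ on $S_{f,k}$ is controlled by $D_k$, but at points of $Z\cap S_{f,k}$ (where $\cO_X(D)$ is only reflexive) this is not clear, and you would need $S_{f,k}\cap Z=\varnothing$ or a finer analysis; and in the non-primitive case you correctly flag the reflexive-extension bookkeeping. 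Your plan is probably salvageable with substantial work, but the quadratic-pairing observation is the shortcut the paper is really using, and it sidesteps every one of these issues.
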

\begin{proof}
    Only the last part remains to be proved. We can again localise and assume that $A$ is a DVR of uniformizer $\varpi$, and must show that $\overline{1} \not\in \varpi M$. We have constructed a pairing $M \times M \to I_f = H^0(S_f, \cO_{S_f}(2g-1))$, and the pairing of $\overline{1}$ with itself is $U(x, y)$, which lies in the image of $H^0(\bbP^1_A, \cO_{\bbP^1_A}(2g-1))$. This image is saturated, and $U(x, y)$ has unit content, by definition, so $U(x, y)$ is saturated in $I_f$. If $\overline{1}$ were a multiple of $\varpi$ then $U(x, y)$ would be a multiple of $\varpi^2$, which is not the case. 
\end{proof}

Let us now take $A = \mathbb{Z}$ and work out the consequences of all the theory developed so far. Fix a polynomial $f(x, y) = f_0 x^n + \dots + f_n y^n \in \mathbb{Z}[x, y]$ of nonzero discriminant. Let $X$ be the corresponding hyperelliptic curve over $\mathbb{Z}$, and let $D_\mathbb{Q}$ be an effective divisor on $X_\mathbb{Q}$ of degree $2g-1$ as at the beginning of \S \ref{sec_integral_orbit_representatives} (therefore assumed not to intersect the locus $z = 0$). Let's assume further that $f_0 f_n \neq 0$. We choose a primitive polynomial $U(x, y) \in \mathbb{Z}[x, y]$ vanishing precisely at the points of $\pi(D_\mathbb{Q})$, with multiplicity; it is determined up to sign. 

Let $M_\mathbb{Q} = H^0(S_{f, \mathbb{Q}}, j_\mathbb{Q}^\ast \cO_{X_\mathbb{Q}}(D_\mathbb{Q}))$; then there is a natural isomorphism $\varphi_\mathbb{Q} : M_\mathbb{Q} \otimes_{L_f} M_\mathbb{Q} \to I_{f, \mathbb{Q}}$ of $L_f$-modules, satisfying $\varphi_\mathbb{Q}(\overline{1} \otimes \overline{1}) = U$, where $\overline{1} \in M_\mathbb{Q}$ is the restriction of the tautological section $1 \in H^0(X_\mathbb{Q}, \cO_{X_\mathbb{Q}}(D_\mathbb{Q}))$.

The following result is a consequence of Theorem \ref{thm_tautological_section_is_saturated}:
\begin{proposition}\label{prop_prepnormof1computation}
    There exists a $\mathbb{Z}$-lattice $M \leq M_\mathbb{Q}$ with the following properties:
    \begin{enumerate}
        \item There exists a $\mathbb{Z}$-basis of $M$ with determinant $e$ such that $(M_\mathbb{Q}, \varphi_\mathbb{Q}, e)$ is an $f$-module and $\varphi(M \otimes M) \leq I_f$.
        \item $\overline{1} \in M$ is saturated.
    \end{enumerate}
\end{proposition}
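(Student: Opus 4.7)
\emph{Proof proposal.} The proposition is essentially a restatement of Theorem \ref{thm_tautological_section_is_saturated} in the case $A = \mathbb{Z}$, so the plan is to specialise that theorem and unpack what it gives.

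First I would take $D \subset X$ to be the scheme-theoretic closure of $D_\mathbb{Q}$ in the integral model $X \to \Spec \mathbb{Z}$; this is $\mathbb{Z}$-flat of degree $2g-1$, and after base change to $\mathbb{Q}$ recovers $D_\mathbb{Q}$. Exactly as in the construction preceding Theorem \ref{thm_tautological_section_is_saturated}, I would let $Z \subset X$ denote the codimension-2 locus where $U$ vanishes on the finitely many fibres above primes dividing $\Delta(f)$, set $V = X - Z$, and extend the Cartier-divisor sheaf $\cO_V(D_V)$ uniquely to a reflexive coherent sheaf $\cO_X(D)$ on $X$. I would then define the $\mathbb{Z}$-lattice
\[ M \;=\; H^0(S_f, j^\ast \cO_X(D)), \]
which embeds naturally in $M_\mathbb{Q} = H^0(S_{f,\mathbb{Q}}, j_\mathbb{Q}^\ast \cO_{X_\mathbb{Q}}(D_\mathbb{Q}))$ by base change.

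Next I would verify that the hypotheses of Theorem \ref{thm_tautological_section_is_saturated} hold: $\Delta(f) \neq 0$ by assumption, $f_0 \neq 0$ (in fact $f_0 f_n \neq 0$) by assumption, and $D_\mathbb{Q}$ is effective of degree $2g-1$ with $D_\mathbb{Q} \cap S_{f,\mathbb{Q}} = \emptyset$ by hypothesis. Parts (1)--(3) of that theorem then directly supply the required data: part (1) produces a $\mathbb{Z}$-basis $\mathcal{B} = \{b_1,\dots,b_n\}$ of $M$ whose wedge $e = b_1 \wedge \dots \wedge b_n$ makes $(M_\mathbb{Q}, \varphi_\mathbb{Q}, e)$ an $f$-module; part (2) says the matrices of $\mathrm{ev}_y \circ \varphi_\mathbb{Q}$ and $\mathrm{ev}_x \circ \varphi_\mathbb{Q}$ with respect to $\mathcal{B}$ lie in $\Mat_n(\mathbb{Z})$, which (since $I_f$ is cut out inside $I_{f,\mathbb{Q}}$ by the integrality of the $\mathrm{ev}_a \circ \varphi$) is equivalent to the containment $\varphi(M \otimes M) \leq I_f$; and part (3) is exactly the saturation of $\overline{1}$ in $M$.

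The only nontrivial bookkeeping is the identification between the pair $(M_\mathbb{Q}, \varphi_\mathbb{Q})$ constructed in Theorem \ref{thm_tautological_section_is_saturated} by base change from our integral $M$ and the pair $(M_\mathbb{Q}, \varphi_\mathbb{Q})$ fixed at the start of the discussion preceding Proposition \ref{prop_prepnormof1computation}: both arise from the same divisor $D_\mathbb{Q}$ and the same choice of $U$, and the pairing in both cases is obtained from the isomorphism $\cO_{X_\mathbb{Q}}(D_\mathbb{Q}) \otimes \cO_{X_\mathbb{Q}}(\iota^\ast D_\mathbb{Q}) \cong \cO_{X_\mathbb{Q}}(2g-1)$ given by multiplication by $U$ (compare (\ref{eqn_isomorphism_of_line_bundles_in_generic_fibre}) and (\ref{eqn_pairing_of_sheaves_on_X})), so they agree. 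Since all substantive work has already been done in Theorem \ref{thm_tautological_section_is_saturated}, there is no real obstacle; the proof is a one-line application once this identification is noted.
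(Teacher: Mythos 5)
Your proposal is correct and takes the same approach the paper intends: the paper itself offers no proof of this proposition beyond the sentence stating that it is a consequence of Theorem \ref{thm_tautological_section_is_saturated}, and your unpacking — specialising $A=\mathbb{Z}$, taking the Zariski closure $D$, forming the reflexive extension $\cO_X(D)$, setting $M = H^0(S_f, j^\ast \cO_X(D))$, and reading off the three parts of Theorem \ref{thm_tautological_section_is_saturated} — is exactly the intended argument.

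One small point worth tightening: your parenthetical justification that integrality of the two Gram matrices is \emph{equivalent} to $\varphi(M\otimes M)\leq I_f$ is not quite right as stated, since the functionals $\mathrm{ev}_x,\mathrm{ev}_y$ do not by themselves cut out $I_f$ inside $I_{f,\mathbb{Q}}$ (the image of $H^0(\P^1_{\Z},\cO(1))$ in $H^0(S_f,\cO_{S_f}(1))\cong\Hom_{\Z}(I_f,\Z)$ has rank $2$, not $n$). The containment $\varphi(M\otimes M)\leq I_f$ should instead be cited directly from the construction: the pairing on $M=H^0(S_f,\cM)$ is produced by Proposition \ref{prop_pencil_of_quadrics_over_A}(2) precisely as an $A$-linear map $M\times M\to I_f$, and this is what feeds into Lemma \ref{lem_application_of_integral_lattice} to yield the integral matrices — the implication runs in that direction, not the reverse. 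This is a presentational fix; the conclusion you reach is correct.
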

The $f$-module structure on $M_{\Q}$ determines two symmetric bilinear forms $\mathrm{ev}_y\circ \varphi$ and $\mathrm{ev}_x \circ \varphi$ on $M_{\Q}$.
Associated with these forms is the reduction covariant $H$ on $M_{\R}$ from \S\ref{subsec: reduction covariant def}.
We now compute the norm of the vector $\overline{1} \in M$ with respect to the reduction covariant. Let $X = x / y \in L_f$, and let $\omega_1, \dots, \omega_n \in \mathbb{C}$ be the roots of $f(X, 1)$, which are nonzero by assumption.
Order them so that $|\omega_1|, \dots, |\omega_r|\leq 1$ and $|\omega_{r+1}|, \dots, |\omega_n|> 1$.
Write $f_x$ for the partial derivative of $f$ with respect to $x$, and define $f_y$ similarly.
\begin{proposition}\label{prop_normof1}
    We have
    \begin{align}\label{equation_normof1}
     (\bar{1},\bar{1})_H = \sum_{i=1}^r \frac{| U(\omega_i, 1) |}{| f_x(\omega_i, 1)|} + \sum_{i=r+1}^n \frac{| U(1, \omega_i^{-1}) |}{| f_y(1, \omega_i^{-1})|}.   
    \end{align}
\end{proposition}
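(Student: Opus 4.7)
The plan is to compute $(\overline{1},\overline{1})_H$ by constructing the simultaneous diagonalizing basis of Lemma \ref{lemma_existencediagonalbasis} using the $L_f$-module structure, expressing $\overline{1}$ in that basis, and then computing the diagonal entries of both symmetric forms explicitly via Lemmas \ref{lemma_identificationconnectingmaps} and \ref{lem_computation_of_zeta}.

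First I would set up the basis. Since $D_\Q$ avoids $S_{f,\Q}$, the tautological section $\overline{1}\in M_{\Q}$ is an $L_f$-module generator of $M_{\Q}$. Let $e_1,\dots,e_n$ be the primitive idempotents of $L_f\otimes_{\R}\bC\cong \bC^n$, indexed so that the $j$-th factor corresponds to the root $\omega_j$ of $f(X,1)$ (so $X\cdot e_j=\omega_j e_j$). Set $b_j = e_j\cdot \overline{1}$. Following the proof of Lemma \ref{lemma_existencediagonalbasis}, the $b_j$ form a $\bC$-basis of $M\otimes_{\R}\bC$ that simultaneously diagonalizes $(-,-)_1=\mathrm{ev}_y\circ\varphi$ and $(-,-)_2=\mathrm{ev}_x\circ\varphi$. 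Since $1=\sum_j e_j$, we have $\overline{1}=\sum_j b_j$, so
\[ (\overline{1},\overline{1})_H \;=\; \sum_{j=1}^n (b_j,b_j)_H \;=\; \sum_{j=1}^n \max\!\bigl(|(b_j,b_j)_1|,\,|(b_j,b_j)_2|\bigr). \]

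Next I would compute the diagonal entries. Since $\varphi(\overline{1}\otimes \overline{1})=U$ and $e_j^2=e_j$, we have $\varphi(b_j\otimes b_j)=e_j\cdot U$. Under the $L_f$-module identification $I_f\cong L_f\cdot y^{n-3}$ valid when $f_0\ne 0$, the element $U$ corresponds to $\alpha\cdot y^{n-3}$ with $\alpha=U(X,1)$, so $e_j\cdot U = U(\omega_j,1)\,e_j\cdot y^{n-3}$. By Lemmas \ref{lemma_identificationconnectingmaps} and \ref{lem_computation_of_zeta}, for any $g\in L_f\otimes\bC$ we have $\mathrm{ev}_y(g\cdot y^{n-3}) = f_0^{-1}\tau(g)$ and $\mathrm{ev}_x(g\cdot y^{n-3}) = f_0^{-1}\tau(Xg)$, where $\tau$ extracts the coefficient of $X^{n-1}$ in the power basis $1,X,\dots,X^{n-1}$ of $L_f$. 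Using the classical partial-fraction identity $e_j = f(X,1)/\bigl((X-\omega_j)f_x(\omega_j,1)\bigr)$ one computes $\tau(e_j) = f_0/f_x(\omega_j,1)$, hence
\[ (b_j,b_j)_1 \;=\; \frac{U(\omega_j,1)}{f_x(\omega_j,1)}, \qquad (b_j,b_j)_2 \;=\; \omega_j\cdot\frac{U(\omega_j,1)}{f_x(\omega_j,1)}, \]
where the second equality is immediate from $X\cdot e_j = \omega_j e_j$.

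Finally I would convert to the stated formula. The previous step gives
\[ (b_j,b_j)_H \;=\; \max(1,|\omega_j|)\cdot\frac{|U(\omega_j,1)|}{|f_x(\omega_j,1)|}. \]
For $j\le r$, the factor $\max(1,|\omega_j|)=1$, producing the first sum in \eqref{equation_normof1}. For $j>r$, I would use the homogeneity $U(\omega_j,1)=\omega_j^{n-3}U(1,\omega_j^{-1})$ and $f_x(\omega_j,1)=\omega_j^{n-1}f_x(1,\omega_j^{-1})$, combined with Euler's identity $\omega_j f_x(\omega_j,1)+f_y(\omega_j,1)=0$ (valid because $f(\omega_j,1)=0$), which rearranges to $|f_x(\omega_j,1)|=|\omega_j|^{n-2}|f_y(1,\omega_j^{-1})|$. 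Substituting collapses the $j$-th summand to $|U(1,\omega_j^{-1})|/|f_y(1,\omega_j^{-1})|$, giving the second sum.

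The only real obstacle is bookkeeping: identifying $U\in I_f$ correctly with $U(X,1)\cdot y^{n-3}$ under the chart $f_0\ne 0$, and reading off $\zeta$ via Lemma \ref{lem_computation_of_zeta} on the correct basis. Everything else reduces to the elementary identity $\tau(e_j)=f_0/f_x(\omega_j,1)$ and a short homogeneity–Euler computation.
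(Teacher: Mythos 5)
Your proposal is correct and follows essentially the same route as the paper's proof: both work in the idempotent basis $e_j\overline{1}$, observe $\varphi(e_j\overline{1}\otimes e_j\overline{1})=e_jU$, read off $\mathrm{ev}_y$ and $\mathrm{ev}_x$ via Lemmas~\ref{lemma_identificationconnectingmaps} and \ref{lem_computation_of_zeta} (the identity $\tau(e_j)=f_0/f_x(\omega_j,1)$ you make explicit is exactly the content of ``computing as in Corollary~\ref{cor_base_point_description_of_rational_orbits}''), and then split according to $|\omega_j|\lessgtr 1$ using the homogeneity/Euler relation $\omega_j f_x(\omega_j,1)=-f_y(\omega_j,1)$ to rewrite the $|\omega_j|>1$ terms.
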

\begin{proof}
    Let  $e_i \in L_f \otimes \mathbb{C}$ be the idempotent corresponding to the root $\omega_i$.
    Since $\bar{1}$ is an $L_f$-module generator, $e_1 \bar{1}, \dots,e_n\bar{1}$ is a $\mathbb{C}$-basis of $M_{\mathbb{C}}$. 
    Moreover since the complexified form $\varphi\colon M_{\mathbb{C}}\times M_{\mathbb{C}}\rightarrow L_f\otimes \mathbb{C}$ is $L_f\otimes \mathbb{C}$-bilinear, $\varphi(e_i \bar{1},e_j\bar{1}) = 0$ if $i\neq j$, hence the bilinear forms $\mathrm{ev}_x\circ \varphi$ and $\mathrm{ev}_y \circ \varphi$ have diagonal Gram matrices with respect to the basis $e_1\bar{1} , \dots, e_ n\bar{1}$.
    By Construction \ref{con_reduction_covariant}, we have
    \[ (\overline{1},\overline{1})_H = \sum_{i=1}^n \max( | \mathrm{ev}_y(\varphi(e_i\bar{1}, e_i\bar{1}))|, | \mathrm{ev}_x(\varphi(e_i\bar{1}, e_i\bar{1})) | ).   \]
    We work out the summands explicitly. 
    For $1\leq i \leq n$ we have $\varphi(e_i \bar{1}, e_i \bar{1}) = e_i \varphi(\bar{1}) = e_i U$ by the $L_f$-bilinearity and the explicit description of $\varphi$ from \eqref{eqn_isomorphism_of_line_bundles_in_generic_fibre}.
    We have $\mathrm{ev}_y(e_i U) = \zeta(ye_i U)$ by Lemma \ref{lemma_identificationconnectingmaps}. 
    Using Lemma \ref{lem_computation_of_zeta} and computing as in the proof of Corollary \ref{cor_base_point_description_of_rational_orbits}, we have $\zeta(y e_i U) = f_0^{-1} \tau( e_i U(X, 1) ) = U(\omega_i, 1) / f_x(\omega_i, 1)$.
    Similarly we have $\mathrm{ev}_x(\varphi(e_i\bar{1},e_i\bar{1})) = \zeta(x e_i U) = \zeta(( x / y ) y e_i U) = \omega_i \zeta(y e_i U) = U(1,\omega_i^{-1})/f_y(1,\omega_i^{-1})$. It follows that
\[ \max( | \zeta(y e_i U) |, | \zeta(x e_i U) | ) = \left\{ \begin{array}{cc} | \zeta(y e_i U) | & | \omega_i | \leq 1
\\ | \zeta(x e_i U) | & | \omega_i | \geq 1. \end{array} \right. \]
If we order the roots so that $| \omega_1 |, \dots, | \omega_r | \leq 1$ and $ | \omega_{r+1} |, \dots, |\omega_n | > 1$, then we find
\[ (\overline{1},\overline{1})_H = \sum_{i=1}^r \frac{| U(\omega_i, 1) |}{| f_x(\omega_i, 1)|} + \sum_{i=r+1}^n \frac{| U(1, \omega_i^{-1}) |}{| f_y(1, \omega_i^{-1})|}.  \]
\end{proof}

\section{Equidistribution of the reduction covariant}\label{sec: equidistribution}

The purpose of this section is to show that the reduction covariant \eqref{equation: reduction covariant integral orbits} becomes equidistributed over integral orbits with respect to the natural measure on the space of lattices $\SL_n(\Z)\backslash X$. 
This will follow from a modification of the geometry-of-numbers methods of \cite[\S4]{bhargava2015mosthyperellipticarepointless}. 
We closely follow the structure of \cite[\S3]{lagathorne2024smallheightoddhyperelliptic}.

\subsection{Preliminaries}\label{subsec_preliminariescounting}

Let $n=2g+2$ for some integer $g\geq 1$ and consider the action of $\SL(W) = \SL_n(\Z)$ on the representation $V$ from \ref{subsec_basicdefs}.

\paragraph{Coordinates on $\SL_n(\R)$.}

Let $H_0$ be the standard inner product on $W_{\R} = \R^n$, whose Gram matrix is the identity matrix.
Then $K = \SO_{H_0}(\R) = \SO_n(\R)$ is a maximal compact subgroup of $\SL_n(\R)$.
Let $T  \leq \SL_n$ be the subgroup of diagonal matrices.
For ease of notation, we will write $(t_1,\dots,t_n)$ for the element $\text{diag}(t_1,\dots,t_n)$ of $T$.
Let $N \leq \SL_n$ denote the subgroup of unipotent upper triangular matrices. 
By the Iwasawa decomposition, the product map $N(\R)\times T(\R)^{\circ} \times K \rightarrow \SL_n(\R)$ is a diffeomorphism.

\paragraph{Fundamental set for $\SL_n(\Z)\backslash \SL_n(\R)$.}
We will use the language of semialgebraic sets, see \cite[Chapter 2]{BCR-realalgebraicgeometry}. We will use without further mention that semialgebraic sets are closed under finite unions, intersections, and (pre-)images under semialgebraic maps.

By definition, a Siegel set is a subset of $\SL_n(\R)$ of the form $\omega \cdot T_c \cdot K$, where $\omega \subset N(\R)$ is a relatively compact subset, $c \in \R_{>0}$ and $T_c \coloneqq  \{(t_1,\dots,t_n) \in T(\R)^{\circ} \colon t_1/t_2> c, \dots,t_{n-1}/t_n > c\}$.
(This definition of $T_c$ differs from the one given in \cite[\S4.1.2]{bhargava2015mosthyperellipticarepointless} since our convention for the action of $\SL_n$ on $V$ is slightly different, see \S\ref{subsec_basicdefs}.)
For every such Siegel set, the set of $\gamma\in \SL_n(\Z)$ with $\gamma\cdot  \Siegel \cap \Siegel \neq \varnothing$ is finite \cite[Corollaire 15.3]{Borel-introductiongroupesarithmetiques} (`Siegel property'). 
Since $\SL_n$ is a Chevalley group, there exists a Siegel set $\Siegel$ with the property that $\SL_n(\Z)\cdot \Siegel = \SL_n(\R)$. Explicitly, by \cite[Chapter 4, Theorem 4.12]{PlatonovRapinchuk-Alggroupsandnumbertheory}, we can take any Siegel set with $c\leq 2/\sqrt{3}$ and $\omega$ containing all $n \in N(\R)$ whose off-diagonal entries $n_{ij}$ satisfy $|n_{ij}|\leq 1/2$.
Fix such a $\Siegel$. After enlarging $\Siegel$, we may and do assume that $\omega$, and consequently $\Siegel$, is open and semialgebraic.

The set $\Siegel$ will serve as our fundamental set for the left action of $\SL_n(\Z)$ on $\SL_n(\R)$.
An $\SL_n(\Z)$-orbit might be represented more than once in $\Siegel$, but this does not cause any problems as long as we incorporate the multiplicity function $m \colon \Siegel \rightarrow \Z_{\geq 1}$, defined by $m(x)=\#(\SL_n(\Z) \cdot x \cap \Siegel)$.
This function is bounded and has semialgebraic fibres.

\paragraph{Measures on $\SL_n(\R)$ and $X$.}

Choose a generator of the rank-$1$ module of left invariant top differential forms of $\SL_n$ over $\Z$.
This generator is unique up to sign and induces a bi-invariant Haar measure $dg$ on $\SL_n(\R)$.
We equip the maximal compact subgroup $K$ with its probability Haar measure.
We now explain how these measures also induce measures on $X$ and $\SL_n(\Z)\backslash X$.

The standard inner product $H_0$ defines an element of $X$ and the map $g\mapsto g\cdot H_0$ induces a $\SL_n(\R)$-equivariant bijection $\SL_n(\R)/K \simeq X$.
We will use this identification without further mention.
Since the measure $dg$ on $\SL_n(\R)$ is bi-invariant, it induces measures on $\SL_n(\R)/K=X$ and $\SL_n(\Z)\backslash \SL_n(\R) /K=\SL_n(\Z)\backslash X $. 
We denote the latter measure by $\mu$.
It is a standard fact that $\mu(\SL_n(\Z)\backslash X)$ is finite; in fact, we have $\mu(\SL_n(\Z)\backslash X )= \Vol(\SL_n(\Z) \backslash \SL_n(\R))= \zeta(2)\cdots\zeta(n)$.

\paragraph{Good subsets of $X$.}
Consider the surjective quotient map 
$\varphi\colon \Siegel \rightarrow \SL_n(\Z)\backslash \SL_n(\R) /K = \SL_n(\Z)\backslash X$.
We call a subset $U \subset \SL_n(\Z)\backslash X$ \emph{good} if it is relatively compact and $\varphi^{-1}(U)\subset \Siegel$ is a semialgebraic subset of $\SL_n(\R)$.
For example, the image under $\varphi$ of a relatively compact semialgebraic subset of $\Siegel$ is good.

\begin{lemma}\label{lemma_countablebasis_goodsubsets}
    There exists a countable basis of good open subsets of $\SL_n(\Z)\backslash X$.
\end{lemma}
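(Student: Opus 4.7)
The plan is to build a countable basis of open, relatively compact, semialgebraic subsets of $\Siegel$ and push them forward along $\varphi$. Explicitly, fix the standard closed embedding $\SL_n(\R)\hookrightarrow \R^{n^2}$, let $B_m$ denote the open ball of radius $m$ around the origin, and set $\Siegel_m = \Siegel\cap B_m$. Each $\Siegel_m$ is open in $\Siegel$, semialgebraic, and has compact closure in $\SL_n(\R)$, and $\bigcup_m \Siegel_m = \Siegel$. Intersecting with a countable basis of open balls in $\R^{n^2}$ (for instance, balls with rational center and rational radius) yields a countable collection $\{V_i\}$ of open, relatively compact, semialgebraic subsets forming a basis for the topology of $\Siegel$.

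Next, I would show that each $U_i := \varphi(V_i)$ is a good open subset of $\SL_n(\Z)\backslash X$. Openness follows because $\varphi$ factors as the restriction to $\Siegel$ of the composition $\SL_n(\R) \to X \to \SL_n(\Z)\backslash X$, both factors of which are open quotient maps (the first by right multiplication by $K$, the second by left multiplication by $\SL_n(\Z)$). Relative compactness follows from continuity, since $\overline{V_i}$ is compact in $\SL_n(\R)$. For semialgebraicity of $\varphi^{-1}(U_i)\subset \Siegel$, the key observation is that $\Siegel \cdot K = \Siegel$ (immediate from the Iwasawa-type decomposition $\Siegel = \omega T_c K$ and $KK=K$), which implies
\begin{align*}
    \varphi^{-1}(\varphi(V_i))\cap \Siegel = \bigcup_{\gamma\in \SL_n(\Z)} (\gamma^{-1}V_i K)\cap \Siegel.
\end{align*}
Any $\gamma$ contributing a nonempty term satisfies $\gamma^{-1}\Siegel\cap \Siegel\neq \varnothing$ (because $V_i K\subset \Siegel K = \Siegel$), so by the Siegel property the union is finite. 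Each surviving term is semialgebraic, since $V_i$, $K=\SO_n(\R)$, $\Siegel$, and the linear map $x\mapsto \gamma^{-1}x$ are semialgebraic, and multiplication in $\SL_n$ is polynomial.

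Finally, I would verify the basis property: given an open $U\subset \SL_n(\Z)\backslash X$ containing a point $[p]$, choose a preimage $p'\in \Siegel$; then $\varphi^{-1}(U)\cap \Siegel$ is open in $\Siegel$ and contains $p'$, so contains some basic $V_i$ with $p'\in V_i$, giving $[p]\in U_i\subset U$.

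The main obstacle is the semialgebraicity of $\varphi^{-1}(U_i)$, since a priori this preimage is indexed by the infinite group $\SL_n(\Z)$. Reduction to a finite union crucially combines the Siegel finiteness property with the equality $\Siegel\cdot K = \Siegel$; absent the latter, one would need to control $\SL_n(\Z)$-translates of $V_iK$ rather than of $V_i$, and $V_iK$ need not be contained in $\Siegel$, breaking the finiteness argument.
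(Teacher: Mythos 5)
Your proof is correct and takes essentially the same approach the paper intends: push forward a countable basis of open, relatively compact, semialgebraic subsets of $\Siegel$ (e.g.\ intersections with rational balls) along $\varphi$, and use the Siegel finiteness property together with $\Siegel K = \Siegel$ to reduce the infinite union over $\SL_n(\Z)$ to a finite one and conclude semialgebraicity of the preimage. The paper defers the proof to \cite{lagathorne2024smallheightoddhyperelliptic} and, in the definition of good subsets, already asserts without proof the key fact you spell out (that $\varphi$ of a relatively compact semialgebraic subset of $\Siegel$ is good); your argument supplies that verification in full.
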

\begin{proof}
    The proof is identical to that of \cite[Lemma 3.2]{lagathorne2024smallheightoddhyperelliptic}.
\end{proof}

The next two lemmas ensure that good subsets have good properties for the purposes of the geometry-of-numbers arguments.

\begin{lemma}\label{lemma_integral_siegelset}
    Let $U\subset \SL_n(\Z)\backslash X$ be good and let $\bar{U}\subset \SL_n(\R)$ be its preimage under the quotient map $\SL_n(\R)\rightarrow \SL_n(\Z)\backslash X$. 
    Then for all $h\in \SL_n(\R)$ we have
    \begin{align*}
    \int_{g \in \Siegel \cap \bar{U}h} m(g)^{-1}\,dg = \mu(U).
    \end{align*}
\end{lemma}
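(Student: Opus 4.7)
The plan is to reduce the claimed identity to the standard unfolding formula for a Siegel set, which is a direct consequence of the Siegel finiteness property and the definition of the multiplicity function. Specifically, for any $\SL_n(\Z)$-left-invariant integrable function $\Phi$ on $\SL_n(\R)$, one has
\[ \int_\Siegel \Phi(g)\, m(g)^{-1}\,dg = \int_{\SL_n(\Z)\backslash \SL_n(\R)} \Phi(g)\,dg, \]
obtained by writing $\Phi(g) = \sum_{\gamma \in \SL_n(\Z)} \Phi(\gamma g) \mathbf{1}_\Siegel(\gamma g) / m(\gamma g)$ (the Siegel property guarantees local finiteness of the sum) and unfolding.

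I would then apply this identity to $\Phi = \mathbf{1}_{\bar{U}h}$. This function is $\SL_n(\Z)$-left-invariant because $\bar{U}$ is $\SL_n(\Z)$-left-invariant by construction and left multiplication by $\SL_n(\Z)$ commutes with right multiplication by $h$; measurability is guaranteed by the hypothesis that $U$ is good, since then $\bar{U} \cap \Siegel$ is semialgebraic. Unfolding yields
\[ \int_{g \in \Siegel \cap \bar{U}h} m(g)^{-1}\,dg = \int_{\SL_n(\Z)\backslash \SL_n(\R)} \mathbf{1}_{\bar{U}h}(g)\,dg. \]

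The next step is to remove the twist by $h$. Since $dg$ is bi-invariant on $\SL_n(\R)$ and the $\SL_n(\Z)$-action is on the left, the induced measure on $\SL_n(\Z)\backslash \SL_n(\R)$ is right-invariant, so the substitution $g \mapsto g h^{-1}$ gives
\[ \int_{\SL_n(\Z)\backslash \SL_n(\R)} \mathbf{1}_{\bar{U}h}(g)\,dg = \int_{\SL_n(\Z)\backslash \SL_n(\R)} \mathbf{1}_{\bar{U}}(g)\,dg. \]
Finally, $\bar{U}$ is the preimage of $U$ under the composite $\SL_n(\R) \to \SL_n(\R)/K = X \to \SL_n(\Z)\backslash X$, and $K$ is equipped with its probability Haar measure, so Fubini along the $K$-fibres identifies the right-hand side with $\mu(U)$.

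No step is particularly difficult: the proof is essentially bookkeeping with invariant measures. The one point that deserves care is checking that the unfolding formula genuinely applies to the (merely $\SL_n(\Z)$-left-invariant, not $K$-right-invariant) indicator $\mathbf{1}_{\bar{U}h}$, and that the right-translation step is valid on the quotient $\SL_n(\Z)\backslash \SL_n(\R)$; both rely only on the bi-invariance of Haar measure and the fact that left and right translations commute.
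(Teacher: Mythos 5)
Your proof is correct and is precisely the expansion of what the paper's one-line proof ("Follows from pushing forward measures and the bi-invariance of $dg$") leaves implicit: the unfolding step is the "pushing forward measures" part (using that $m$ is exactly the fibre cardinality of $\Siegel \to \SL_n(\Z)\backslash\SL_n(\R)$), the translation by $h$ uses right-invariance of $dg$, and the final reduction to $\mu(U)$ uses the normalization of the Haar measure on $K$. The one remark worth recording for completeness is that $\bar U$ itself is Borel because $\bar U = \SL_n(\Z)\cdot(\bar U\cap\Siegel)$ is a countable union of Borel translates of the semialgebraic set $\bar U\cap\Siegel$, so the unfolding formula applies without further measurability concerns.
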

\begin{proof}
    Follows from pushing forward measures and the bi-invariance of $dg$.
\end{proof}

\begin{lemma}\label{lemma_goodsubsetimpliesregionsemialg}
    Let $U\subset \SL_n(\Z)\backslash X$ be good, and let $A\subset \SL_n(\R)$ be a compact semialgebraic subset. 
    Let $\bar{U}$ be the preimage of $U$ under the quotient map $\SL_n(\R) \rightarrow \SL_n(\Z)\backslash X$.
    Then the set $\{ (g,h) \in \Siegel \times A \colon gh \in \bar{U} \}$ is a semialgebraic subset of $\SL_n(\R)\times \SL_n(\R)$.
\end{lemma}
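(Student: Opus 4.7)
The first move is to decompose $\bar U$ in a way that exposes its semialgebraic structure on bounded regions. By hypothesis $\bar U$ is left $\SL_n(\Z)$-invariant, and the set $\varphi^{-1}(U) = \Siegel \cap \bar U$ is semialgebraic in $\SL_n(\R)$, since $U$ is good. The strategy is to use the $\SL_n(\Z)$-invariance together with the Siegel property to express the condition $gh\in\bar U$ (for $(g,h)\in \Siegel\times A$) in terms of finitely many polynomial conditions, each of which pulls back the semialgebraic set $\varphi^{-1}(U)$.

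Concretely, I would apply the Siegel property in its strengthened form (a consequence of \cite[Corollaire 15.3]{Borel-introductiongroupesarithmetiques} applied to the compact subset $A$): the set
\[ \Gamma = \{\gamma\in\SL_n(\Z) : \gamma\Siegel\cap\Siegel\cdot A\neq\varnothing\} \]
is finite. Given $(g,h)\in\Siegel\times A$ with $gh\in\bar U$, the identity $\SL_n(\Z)\cdot\Siegel=\SL_n(\R)$ produces a $\gamma\in\SL_n(\Z)$ with $\gamma^{-1}gh\in\Siegel$; since $gh\in\Siegel\cdot A$, one has $\gamma\in\Gamma$; and since $\bar U$ is $\SL_n(\Z)$-invariant, $\gamma^{-1}gh\in\Siegel\cap\bar U=\varphi^{-1}(U)$. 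Conversely, any $(g,h)$ for which some $\gamma\in\Gamma$ satisfies $\gamma^{-1}gh\in\varphi^{-1}(U)$ automatically has $gh\in\bar U$. This gives the finite decomposition
\[ \{(g,h)\in\Siegel\times A : gh\in\bar U\} = \bigcup_{\gamma\in\Gamma}\bigl\{(g,h)\in\Siegel\times A : \gamma^{-1}gh\in\varphi^{-1}(U)\bigr\}. \]

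Each summand on the right is the intersection of the semialgebraic set $\Siegel\times A$ with the preimage of the semialgebraic set $\varphi^{-1}(U)$ under the polynomial map $(g,h)\mapsto \gamma^{-1}gh$ (this is polynomial because matrix multiplication is, and $\gamma^{-1}$ is a fixed matrix). Preimages and intersections preserve the semialgebraic property, so each piece is semialgebraic; a finite union of semialgebraic sets is semialgebraic, which yields the claim.

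The one nontrivial input is the Siegel property with a compact set on one side, i.e.\ the finiteness of $\Gamma$. I would expect this to be the main point requiring care: the cited reference states finiteness only for $\gamma\Siegel\cap\Siegel$, but the version needed here follows either from the general statement in \cite[Corollaire 15.3]{Borel-introductiongroupesarithmetiques} applied to an enlarged Siegel set containing $\Siegel\cdot A$, or by covering $\Siegel\cdot A$ by finitely many translates of a Siegel set and applying the standard form to each. Once that finiteness is in hand, the rest is a purely formal manipulation of semialgebraic sets.
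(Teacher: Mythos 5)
Your argument is correct and takes the same route as the paper (which defers to the identically-proved Lemma 3.4 of the cited reference \cite{lagathorne2024smallheightoddhyperelliptic}): decompose the condition $gh\in\bar U$ over the finite set of $\gamma\in\SL_n(\Z)$ produced by the Siegel property, and observe that each piece is the preimage of the semialgebraic set $\varphi^{-1}(U) = \Siegel\cap\bar U$ under the polynomial map $(g,h)\mapsto\gamma^{-1}gh$ intersected with $\Siegel\times A$. The point you flag as requiring care, the finiteness of $\{\gamma\in\SL_n(\Z) : \gamma\Siegel\cap\Siegel A\neq\varnothing\}$, is exactly the right thing to worry about and does hold since for compact $A$ the product $\Siegel A$ is contained in a larger Siegel set, to which \cite[Corollaire 15.3]{Borel-introductiongroupesarithmetiques} applies.
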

%This is not immediately obvious, since the set $\bar{U}$ is not necessarily semialgebraic.
\begin{proof}
    Identical to the proof of \cite[Lemma 3.4]{lagathorne2024smallheightoddhyperelliptic}.
\end{proof}

\paragraph{Fundamental sets for $\SL_n(\R)\backslash V(\R)$.}

Let $B^s(\R)$ denote the set of polynomials $f = f_0x^{n} +f_1x^{n-1}y + \cdots +f_ny^n \in \R[x]$ with nonzero discriminant. 
Let $I(m) \subset B^s(\R)$ be the subset of polynomials having exactly $2m$ real roots. 
The sets $I(m)$ with $0\leq m\leq n/2$ are the connected components of $B^s(\R)$.
Bhargava \cite[\S4.1.1]{bhargava2015mosthyperellipticarepointless} has partitioned the set of elements in $V(\R)$ whose invariant form lies in $I(m)$ into components $V^{(m,\tau)}$ indexed by $1\leq \tau \leq \lfloor 2^{2m-1}\rfloor$.
Moreover, he constructs for each $\tau$ a fundamental set $L^{(m,\tau)}$ for the action of $\SL_n^{\pm}(\R) = \{g\in \GL_n(\R) \colon \det(g) = \pm 1\}$ on height-$1$ elements in $V^{(m,\tau)}$.
This fundamental set is bounded and semialgebraic and has the property that the invariant binary form map $L^{(m,\tau)}\rightarrow \{ f\in I(m)\colon \height(f) =1 \}$ is a semialgebraic isomorphism.
We do not recall the precise definition of $L^{(m,\tau)}$ here, but we only mention that every element of $L^{(m,\tau)}$ is an $\R_{>0}$-multiple of an element $(A,B)$, where $A$ is the block matrix of the form
\begin{align}\label{eq1_fundset}
    \left(\lambda_1, \cdots , \lambda_{2m}, \psi(\lambda_{2m+1}), \cdots, \psi(\lambda_{n/2+m})\right)
\end{align}
and $B$ is the block matrix of the form 
\begin{align}\label{eq2_fundset}
    \left(\mu_1, \cdots , \mu_{2m}, \psi(\mu_{2m+1}), \cdots, \psi(\mu_{n/2+m})\right),
\end{align}
where $\psi(x+y\sqrt{-1}) = \begin{pmatrix}x & y \\ y& -x \end{pmatrix}$, $\lambda_i, \mu_i \in \R$ for all $1\leq i \leq 2m$, $\lambda_i, \mu_i\in \mathbb{C}$ for all $2m+1\leq i\leq n/2+m$ and $\max(|\lambda_i|, |\mu_i|) = 1$ for all $1\leq i \leq n/2+m$.
(We note that there is a typo in \cite[\S4.1.1]{bhargava2015mosthyperellipticarepointless} in the definition of $\psi$ since the displayed matrix there is not symmetric.)

Since we will be working with the group $\SL_n(\R)$, we need to slightly modify the definition of $L^{(m,\tau)}$ when $m=0$.
Define $L^{(0,1+)} = L^{(0,1)}$ and $L^{(0,1-)} = g\cdot L^{(0,1)}$, where $g$ is the diagonal matrix $(-1,1,\cdots,1)$. 
If $\tau = 1+$ (respectively $\tau=1 -$), let $V^{(0,\tau)}$ denote the set of those $(A,B)\in V(\R)$ such that $f_{A,B}\in I(0)$ and the pair $(\alpha, z) \in L_f^{\times} \times \R^{\times}$ corresponding to the $\SL_n(\R)$-orbit of $(A,B)$ under Corollary \ref{cor_base_point_description_of_rational_orbits} has the property that $z>0$ (respectively $z<0$).
We have $V^{(0,1)} = V^{(0,1+)} \sqcup V^{(0,1-)}$.
Define the indexing set
\begin{align}\label{eq_indexingsetsigma}
    \Sigma = \{(0,1+),(0,1-)\} \cup \{(m,\tau) \colon 1\leq m\leq n/2 \text{ and } 1\leq \tau\leq 2^{2m-1}\}.
\end{align}

\begin{lemma}\label{lemma: Lmtau is fundamental set}
    For each $(m,\tau)\in\Sigma$, $L^{(m,\tau)}$ is a fundamental set for the action of $\SL_n(\R)$ on height-$1$ elements in $V^{(m,\tau)}$.
\end{lemma}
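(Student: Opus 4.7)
The plan is to deduce this lemma from the analogous fact for the slightly larger group $\SL_n^{\pm}(\R) = \{g \in \GL_n(\R) : \det g = \pm 1\}$, which is established in \cite[\S 4.1.1]{bhargava2015mosthyperellipticarepointless}: namely, for each $(m, \tau)$ with $m \geq 1$, and for $(0, 1)$, the set $L^{(m, \tau)}$ is a fundamental set for $\SL_n^{\pm}(\R)$ acting on the height-$1$ elements of $V^{(m, \tau)}$. Since $[\SL_n^{\pm}(\R) : \SL_n(\R)] = 2$, the passage between the two actions reduces to determining when an $\SL_n^{\pm}(\R)$-orbit is a single $\SL_n(\R)$-orbit and when it splits into two.

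Using Corollary \ref{cor_base_point_description_of_rational_orbits} (reducing to the Zariski-dense locus $f_0 \neq 0$), I would parametrize $\SL_n(\R)$-orbits on $V_f(\R)$ by equivalence classes of pairs $(\alpha, z) \in L_{f, \R}^\times \times \R^\times$ satisfying $z^2 \mathbf{N}(\alpha) = f_0^{n+1}$, modulo $(\alpha, z) \sim (\beta^2 \alpha, \mathbf{N}(\beta)^{-1} z)$ for $\beta \in L_{f, \R}^\times$. Any $g_0 \in \SL_n^{\pm}(\R) \setminus \SL_n(\R)$ with $\det g_0 = -1$ flips the sign of the wedge $e = b_1 \wedge \dots \wedge b_n$ appearing in the associated $f$-module, and therefore sends $(\alpha, z) \mapsto (\alpha, -z)$. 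Hence the $\SL_n^{\pm}(\R)$- and $\SL_n(\R)$-orbits of $(\alpha, z)$ coincide if and only if there exists $\beta \in L_{f, \R}^\times$ with $\beta^2 = 1$ and $\mathbf{N}(\beta) = -1$. When $m \geq 1$, $L_{f, \R}$ has a real factor and we may take $\beta = (-1, 1, \dots, 1)$; when $m = 0$, $L_{f, \R} \cong \mathbb{C}^{n/2}$ has only positive norms and no such $\beta$ exists, so each $\SL_n^{\pm}(\R)$-orbit in $V^{(0, 1)}$ splits into two $\SL_n(\R)$-orbits distinguished by the sign of $z$.

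The $m \geq 1$ case then follows at once: any fundamental set for $\SL_n^{\pm}(\R)$ on $V^{(m, \tau)}$ is also one for $\SL_n(\R)$. For $m = 0$, I would argue that the sign of $z$ is a locally constant function on $V(\R)^{\Delta \ne 0}$, hence constant on any connected subset; the set $L^{(0,1)}$ is connected (via the explicit parametrization \eqref{eq1_fundset}--\eqref{eq2_fundset}, where each factor $(\lambda_i, \mu_i) \in \mathbb{C}^2$ with $\max(|\lambda_i|, |\mu_i|) = 1$ traces out a connected sphere) and lies in $V(\R)^{\Delta \ne 0}$, so $L^{(0,1)}$ is entirely contained in one of $V^{(0, 1+)}$ or $V^{(0, 1-)}$. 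By the paper's sign convention this is $V^{(0, 1+)}$, so $L^{(0,1+)} = L^{(0,1)}$ lies in $V^{(0,1+)}$. To verify it is a fundamental set for $\SL_n(\R)$ on $V^{(0,1+)}$: given $v \in V^{(0,1+)}$, some $h \in \SL_n^{\pm}(\R)$ sends $v$ into $L^{(0,1)}$; since $h$ preserves or interchanges $V^{(0,1\pm)}$ according to $\det(h)$, and both $v$ and $hv$ lie in $V^{(0,1+)}$, we must have $\det(h) = +1$, i.e., $h \in \SL_n(\R)$. The case of $V^{(0, 1-)}$ is handled symmetrically by conjugation by $g = \mathrm{diag}(-1, 1, \dots, 1) \in \SL_n^{\pm}(\R)$, using $L^{(0, 1-)} = g \cdot L^{(0, 1)}$ and the map $V^{(0, 1+)} \xrightarrow{\sim} V^{(0, 1-)}$ it induces.

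The main obstacle is the $m = 0$ case, specifically verifying that the sign of $z$ descends to a well-defined continuous map $V(\R)^{\Delta \ne 0} \to \{\pm\}$; this requires showing that the orbit parametrization of Corollary \ref{cor_base_point_description_of_rational_orbits} can be made continuous in a neighbourhood of each point, by choosing bases of $L_f$ varying continuously with $f$ away from the discriminant locus. A minor technical point is the extension from the locus $f_0 \neq 0$ to all of $V(\R)^{\Delta \ne 0}$, which follows from continuity of the orbit data away from the discriminant.
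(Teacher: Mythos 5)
Your approach matches the paper's in its overall structure: both exploit the fact that $[\SL_n^{\pm}(\R):\SL_n(\R)]=2$ and determine whether each $\SL_n^{\pm}(\R)$-orbit meeting $L^{(m,\tau)}$ is a single $\SL_n(\R)$-orbit or splits into two, using the parametrization of Corollary \ref{cor_base_point_description_of_rational_orbits} to decide this in the $m=0$ case. The main difference is in the $m>0$ case: you route everything through the $(\alpha,z)$-parametrization and the existence of $\beta\in L_{f,\R}^\times$ with $\beta^2=1$ and $\mathbf{N}(\beta)=-1$, whereas the paper has a one-line shortcut: from the explicit block-diagonal form \eqref{eq1_fundset}--\eqref{eq2_fundset}, when $m>0$ the first block is a $1\times 1$ scalar, so $g=\mathrm{diag}(-1,1,\dots,1)$ literally fixes every $v\in L^{(m,\tau)}$; this makes $\SL_n^{\pm}(\R)\cdot v = \SL_n(\R)\cdot v$ immediately, without touching the orbit parametrization at all. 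Your parametrization argument is correct but slower. For $m=0$ you also add a connectivity/continuity argument (that $\mathrm{sign}(z)$ is locally constant and $L^{(0,1)}$ is connected) to conclude $L^{(0,1)}\subset V^{(0,1+)}$; the paper omits this, essentially treating it as a labelling convention. This is a reasonable point of care on your part, though it relies on details of Bhargava's construction of $L^{(0,1)}$ that the paper chooses not to spell out, so it is hard to make fully rigorous from the material as presented.
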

\begin{proof}
    Let $g\in \SL_n^{\pm}(\R)$ be the diagonal matrix $(-1,1,\cdots,1)$.
    An orbit $\SL_n^{\pm}(\R)\cdot v\subset V(\R)$ breaks up into one or two $\SL_n(\R)$-orbits, depending on whether $v$ is $\SL_n(\R)$-conjugate to $g\cdot v$ or not.
    The explicit description of $L^{(m,\tau)}$ shows that if $m>0$ then $g\cdot v = v$ for all $v\in L^{(m,\tau)}$, and so $L^{(m,\tau)}$ is a fundamental set for the $\SL_n(\R)$-action on height $1$ elements in $V^{(m,\tau)}$ since it was such a set for the $\SL_n^{\pm}(\R)$-action.
    If $m=0$ and $v\in L^{(0,1+)}$, then $g\cdot v\in L^{(0,1-)}$ is not $\SL_n(\R)$-conjugate to $v$ by Corollary \ref{cor_base_point_description_of_rational_orbits}, so each $\SL_n^{\pm}(\R)$-orbit in $V^{(0,1)}$ breaks up into two $\SL_n(\R)$-orbits, corresponding to the decomposition $V^{(0,1)} = V^{(0,1+)} \sqcup V^{(0,1-)}$.
\end{proof}

\begin{lemma}\label{lemma:realstabilizers}
    If $(m,\tau)\in \Sigma$ and $v\in V^{(m,\tau)}$, then $\#\Stab_{\SL_n(\R)}(v)$ equals $2^{n/2+m-1}$ if $m>0$ and $2^{n/2}$ if $m=0$.
\end{lemma}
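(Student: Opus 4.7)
The plan is to compute the stabilizer directly using the description of $\SL_n(\R)$-orbits in $V_f(\R)$ via $f$-modules from \S\ref{subsec_rational_orbits}. Given $v = (A,B) \in V_f(\R)$ corresponding to an $f$-module $(M,\varphi,e)$, an element $g \in \SL_n(\R)$ stabilizing $v$ induces (via transport of structure through the bijection of Proposition \ref{prop_bijectionorbitsfmodules}) an automorphism of the $f$-module. Since $M$ is free of rank one over $L_f$, any $L_f$-linear automorphism is multiplication by some $\beta \in L_f^\times$; compatibility with $\varphi$ forces $\beta^2 = 1$, and compatibility with $e \in \wedge^n_\R M$ forces $\mathbf{N}_{L_f/\R}(\beta) = 1$. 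Conversely, every such $\beta$ produces an automorphism and hence a stabilizing element. Thus
\[
\Stab_{\SL_n(\R)}(v) \;\cong\; \{\beta \in L_f^\times : \beta^2 = 1,\ \mathbf{N}_{L_f/\R}(\beta) = 1\}.
\]

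Next, I would use the hypothesis $(m,\tau) \in \Sigma$ to describe $L_f$ explicitly. Since $f = f_{A,B}$ has exactly $2m$ real roots and $n-2m$ complex roots coming in conjugate pairs, the étale $\R$-algebra $L_f$ decomposes as $L_f \cong \R^{2m} \times \mathbb{C}^{(n-2m)/2}$. The 2-torsion subgroup of $L_f^\times$ is therefore $\{\pm 1\}^{2m} \times \{\pm 1\}^{(n-2m)/2}$, of order $2^{2m + (n-2m)/2} = 2^{n/2 + m}$. Under this decomposition, the norm map $\mathbf{N}_{L_f/\R}$ sends $(\epsilon_1,\dots,\epsilon_{2m}, \delta_1,\dots,\delta_{(n-2m)/2})$ to $\prod_i \epsilon_i \cdot \prod_j \delta_j^2 = \prod_i \epsilon_i$, since the norm of a complex $\pm 1$ is always $+1$.

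If $m \geq 1$, this norm map restricted to 2-torsion is surjective onto $\{\pm 1\}$, so its kernel has index two, giving $\#\Stab = 2^{n/2+m-1}$. If $m = 0$, the norm is identically $+1$ on all 2-torsion, so every $\beta$ with $\beta^2 = 1$ satisfies the norm condition, yielding $\#\Stab = 2^{n/2}$.

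The only mild subtlety is in the case $m = 0$ where we distinguished $\tau = 1\pm$: here the stabilizer in $\SL_n(\R)$ could in principle differ from that in $\SL_n^\pm(\R)$. But as observed in the proof of Lemma \ref{lemma: Lmtau is fundamental set}, the $\SL_n^\pm(\R)$-orbit of $v \in V^{(0,1+)}$ splits into two distinct $\SL_n(\R)$-orbits (one in each of $V^{(0,1+)}, V^{(0,1-)}$), so the index $[\Stab_{\SL_n^\pm(\R)}(v) : \Stab_{\SL_n(\R)}(v)] = 1$, and the count via $f$-modules in $\SL_n(\R)$ is unaffected. The main (minor) obstacle is simply being careful with this $m=0$ case; all the other content is book-keeping once the identification of the stabilizer with 2-torsion units of norm one is in hand.
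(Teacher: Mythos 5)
Your argument is correct, and it takes a genuinely different route from the paper. The paper's proof is a two-line reduction: it cites Bhargava (\cite[\S3.2]{bhargava2015mosthyperellipticarepointless}) for the size of $\Stab_{\SL_n^{\pm}(\R)}(v)$, and then divides by the index $[\Stab_{\SL_n^\pm(\R)}(v) : \Stab_{\SL_n(\R)}(v)] \in \{1, 2\}$, which is $2$ precisely when $m > 0$ because (by the proof of Lemma~\ref{lemma: Lmtau is fundamental set}) the $\SL_n^\pm(\R)$-orbit is a single $\SL_n(\R)$-orbit in that case. You instead compute $\Stab_{\SL_n(\R)}(v)$ directly from the $f$-module description of Proposition~\ref{prop_bijectionorbitsfmodules}: a stabilizing $g$ is $L_f$-linear (it commutes with $A_a^{-1}B$, which is regular semisimple and generates the $L_f$-action), hence is multiplication by $\beta \in L_f^\times$; preserving $\varphi$ forces $\beta^2 = 1$, preserving $e$ forces $\mathbf{N}_{L_f/\R}(\beta) = 1$, and the count is then a direct computation in $L_f \cong \R^{2m}\times\mathbb{C}^{(n-2m)/2}$. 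The advantage of your approach is that it is self-contained within the paper's own machinery and makes transparent \emph{why} the norm-one condition costs a factor of $2$ exactly when there is at least one real factor; the cost is that you reprove a special case of the stabilizer computation of \cite{bhargava2015mosthyperellipticarepointless}. One small remark: your final paragraph about $\tau = 1\pm$ is not needed — your computation gives $\#\Stab_{\SL_n(\R)}(v)$ directly and depends only on $m$, never on $\tau$. It is a valid consistency check against the index picture, but nothing in your argument requires it.
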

\begin{proof}
    The index of $\Stab_{\SL_n(\R)}(v)$ in $\Stab_{\SL_n^{\pm}(\R)}(v)$ is $2$ or $1$, depending on whether $\SL_n^{\pm}(\R)\cdot v$ breaks up into one or two $\SL_n(\R)$-orbits. 
    This lemma therefore follows from (the proof of) Lemma \ref{lemma: Lmtau is fundamental set} and the computation of $\#\Stab_{\SL_n^{\pm}(\R)}(v)$ in \cite[\S3.2]{bhargava2015mosthyperellipticarepointless}.
    \end{proof}

The next crucial lemma shows that the reduction covariant of \S\ref{subsec: reduction covariant def} is well behaved (in fact, constant) on the fundamental sets $L^{(m,\tau)}$.
\begin{lemma}\label{lem_reduction_covariant_constant_on_section}
    For every pair $(m,\tau)$ as above and elements $(A,B)$, $(A',B')\in L^{(m,\tau)}$, we have $[H_{A,B}] = [H_{(A',B')}]$.
\end{lemma}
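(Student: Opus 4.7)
The plan is to show that for any normalized $(A,B) \in L^{(m,\tau)}$, the reduction covariant $H_{A,B}$ is the standard inner product $H_0$ on $\R^n$, from which $[H_{A,B}] = [H_0]$ follows for every $(A,B) \in L^{(m,\tau)}$. First, scaling $(A,B)$ by $t \in \R_{>0}$ leaves the $\mathbb{C}$-diagonalizing basis of Lemma \ref{lemma_existencediagonalbasis} unchanged but scales each diagonal entry of $H_{A,B}$ by $t$, so $[H_{tA, tB}] = [H_{A,B}]$. Since every element of $L^{(m,\tau)}$ is a positive real multiple of a pair $(A, B)$ in the normalized block form (\ref{eq1_fundset})--(\ref{eq2_fundset}), it suffices to verify $H_{A,B} = I_n$ for such a normalized pair.

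The block-diagonal structure of $A$ and $B$ induces an orthogonal decomposition $\R^n = \bigoplus_i W_i$ that both forms respect, so the $\mathbb{C}$-diagonalizing basis is a concatenation of diagonalizing bases on each block and $H_{A,B}$ splits accordingly. On each of the first $2m$ one-dimensional real blocks, the single standard basis vector already diagonalizes $A$ and $B$, and contributes $\max(|\lambda_i|, |\mu_i|) = 1$ to $H$. On each subsequent two-dimensional complex block carrying $(\psi(\lambda), \psi(\mu))$, I pass to the $\mathbb{C}$-basis $f_1 = e_1 + ie_2$, $f_2 = e_1 - ie_2$; a direct computation gives $\psi(z)$ the Gram matrix $\mathrm{diag}(2z, 2\overline{z})$ in this basis, so $f_1, f_2$ simultaneously diagonalize $\psi(\lambda)$ and $\psi(\mu)$. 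By Construction \ref{con_reduction_covariant} and the normalization $\max(|\lambda|, |\mu|) = 1$, the Hermitian form $H$ has both diagonal entries equal to $2$ in the $f$-basis.

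Expanding $v = c_1 e_1 + c_2 e_2$ as $\tfrac{1}{2}(c_1 - ic_2) f_1 + \tfrac{1}{2}(c_1 + ic_2) f_2$, a short calculation gives
\begin{align*}
H(v, v) = 2 \cdot \tfrac{1}{4}|c_1 - ic_2|^2 + 2 \cdot \tfrac{1}{4}|c_1 + ic_2|^2 = c_1^2 + c_2^2,
\end{align*}
so the restriction of $H_{A,B}$ to each complex block is the standard Euclidean inner product. Assembling blocks, $H_{A,B}$ has Gram matrix the identity in the standard basis of $\R^n$, proving $[H_{A,B}] = [H_0]$.

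The main technical obstacle is just the bookkeeping when moving between the real basis $(e_1, e_2)$, the complex diagonalizing basis $(f_1, f_2)$, and the associated Hermitian structure on $W_i \otimes_\R \mathbb{C}$. The essential mechanism is that the normalization $\max(|\lambda|, |\mu|) = \max(|\overline{\lambda}|, |\overline{\mu}|) = 1$ produces identical weights on the complex-conjugate pair $f_1, f_2$, which is exactly what forces the Hermitian form to descend to the standard $\R$-inner product; this symmetry is precisely the reason our reduction covariant $\mathcal{R}$ is chosen rather than the alternatives $\mathcal{R}_1, \mathcal{R}_2$ mentioned in the remark following Lemma \ref{lemma: determinant of reduction covariant}.
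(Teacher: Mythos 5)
Your proof is correct and follows the same route as the paper's: both rely on the explicit block form \eqref{eq1_fundset}--\eqref{eq2_fundset} and the normalization $\max(|\lambda_i|,|\mu_i|)=1$. The paper only writes out the all-real case $m=n/2$ and says the mixed case is ``similar''; you have supplied the missing $2\times 2$ complex-block computation, and it checks out (the Gram matrix of $\psi(z)$ in the basis $f_1=e_1+ie_2$, $f_2=e_1-ie_2$ is indeed $\mathrm{diag}(2z,2\bar z)$, and the restriction of the resulting Hermitian form back to $\R^2$ is the standard one). One small imprecision in your closing remark: the equality of the $H$-weights on the conjugate pair $f_1,f_2$ is not what distinguishes $\mathcal{R}$ from $\mathcal{R}_1,\mathcal{R}_2$, since $|\lambda|=|\bar\lambda|$ gives that symmetry for all three. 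What actually singles out $\mathcal{R}$ is that its $\max$ coincides with the $\max(|\lambda_i|,|\mu_i|)=1$ normalization of $L^{(m,\tau)}$, pinning every diagonal weight at the same constant; with $\mathcal{R}_1$, say, a real $1\times 1$ block would contribute $|\lambda_i|$, which the normalization leaves free to vary, and the lemma would fail.
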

\begin{proof}
    This follows from the explicit description of the elements in $L^{(m,\tau)}$ given by the expressions \eqref{eq1_fundset}, \eqref{eq2_fundset}.
    If $m=n/2$, then every pair $(A,B) \in L^{(m,\tau)}$ consists of diagonal matrices, and the condition $\max(|\lambda_i|, |\mu_i|)=1$ above implies that $H_{A,B}$ equals (up to $\R_{>0}$-scaling) the standard inner product on $W_{\R} = \R^n$.
    The case that $m<n/2$ is similar; we omit the details.
\end{proof}

Since we are free to replace $L^{(m,\tau)}$ by $g\cdot L^{(m,\tau)}$ for some $g\in \SL_n(\R)$ and preserve all of its required properties, we may and do assume that the $L^{(m,\tau)}$ have been chosen so that the reduction covariant of every element in $L^{(m,\tau)}$ equals the standard inner product $H_0 \in X$.

\paragraph{Counting lattice points.}

We recall the following proposition \cite[Theorem 1.3]{BarroeroWidmer-lattice}, which strengthens a well-known result of Davenport \cite{Davenport-onaresultofLipschitz}.
\begin{proposition}\label{prop_countlatticepointsbarroero}
	Let $m,n\geq 1$ be integers, and let $Z\subset \R^{m+n}$ be a semialgebraic subset. 
	For $T\in \R^m$, let $Z_T = \{x\in \R^n\colon (T,x) \in Z\}$, and suppose that all such subsets $Z_T$ are bounded.
	Then
	\begin{align*}
		\#(Z_T \cap \Z^n) = \Vol(Z_T)+O(\max\{1,\Vol(Z_{T,j})\}),
	\end{align*}
	where $Z_{T,j}$ runs over all orthogonal projections of $Z_T$ to all $j$-dimensional coordinate hyperplanes $(1\leq j \leq n-1)$. 
	Moreover, the implied constant depends only on $Z$. 
 \end{proposition}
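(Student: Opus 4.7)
The statement is a uniform lattice point counting result in the spirit of Davenport's classical lemma \cite{Davenport-onaresultofLipschitz}, with the crucial additional feature that the implied constant is uniform in the parameter $T$. My plan is to combine the standard Davenport counting principle with cylindrical algebraic decomposition (or, more generally, o-minimal cell decomposition) of $Z$ adapted to the projection $\pi\colon\R^{m+n}\to\R^m$ onto the first $m$ coordinates.

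\textbf{Step 1.} Apply cylindrical algebraic decomposition to $Z$, compatible with the product structure $\R^{m+n} = \R^m\times\R^n$. This produces a finite partition $Z = \bigsqcup_i C_i$ into semialgebraic cells such that each $\pi(C_i)\subset\R^m$ is a cell and, over each such base cell, the fibre $(C_i)_T$ is described by a uniformly bounded number of polynomial inequalities in $n$ variables whose degrees are bounded in terms of $Z$ only. The number of cells and the complexity of their fibres depend only on the defining polynomials of $Z$, not on $T$.

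\textbf{Step 2.} For each cell $C_i$ and each $T$ with $(C_i)_T$ non-empty, estimate $\#((C_i)_T\cap\Z^n)$ via the standard Davenport trick: associate to each lattice point $v\in(C_i)_T\cap\Z^n$ the cube $v+[0,1)^n$, and compare the total volume of these cubes with $\Vol((C_i)_T)$. The discrepancy is bounded by the number of unit cubes meeting the boundary $\partial(C_i)_T$. Because $(C_i)_T$ has uniformly bounded semialgebraic complexity, its boundary admits a parametrisation by a bounded number of Lipschitz maps from $(n-1)$-dimensional cubes, and hence the boundary-cube count is bounded by a constant (depending only on the cell type, hence only on $Z$) times $\max_{1\le j\le n-1}\Vol((C_i)_{T,j})$, where $(C_i)_{T,j}$ runs through the coordinate projections.

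\textbf{Step 3.} Sum these estimates over the finitely many cells $C_i$. Each cell contributes a main term summing to $\Vol(Z_T)$ and an error of size $O(\max_j\Vol((C_i)_{T,j}))$, which is dominated by $O(\max_j\Vol(Z_{T,j}))$ since $(C_i)_{T,j}\subset Z_{T,j}$. The total error is therefore $O(\max\{1,\max_j\Vol(Z_{T,j})\})$ with implied constant depending only on the cell decomposition, hence only on $Z$.

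\textbf{Main obstacle.} The delicate point is achieving the uniform Lipschitz control needed in Step 2: one must refine the CAD so that every cell admits a parametrisation by maps whose Lipschitz constants, and whose number, are bounded independently of the base point $T\in\pi(C_i)$. In Davenport's original setting this is an input hypothesis, but for a semialgebraic family it requires a careful inductive construction exploiting the bounded complexity of semialgebraic functions (for instance, via cell decomposition followed by a suitable reparametrisation argument). This uniform cellular Lipschitz decomposition is the main technical content of Barroero--Widmer \cite{BarroeroWidmer-lattice}, and everything else in the proof is formal manipulation of the classical lattice counting principle.
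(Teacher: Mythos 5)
The paper does not actually prove this proposition; it is recorded verbatim as \cite[Theorem 1.3]{BarroeroWidmer-lattice} and used as a black box, so there is no internal proof for your attempt to be measured against. Your sketch is nevertheless a reasonable reconstruction of the broad outline (cell decomposition compatible with the projection to the parameter space, Davenport-type estimate on each fibre, summation over cells), and you correctly locate the substantive work in Barroero--Widmer. However, Step 2 misidentifies the mechanism, and this is a genuine gap if the proposal is read as a self-contained argument.

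Davenport's lemma \cite{Davenport-onaresultofLipschitz} and the Barroero--Widmer strengthening do not rest on Lipschitz parametrisations of boundaries. They rest on an integer $h$ bounding the number of intervals in which any axis-parallel line meets the region (together with the same condition for every coordinate projection); the resulting error is $\sum_{j} h^{n-j} V_j$, where $V_j$ is the sum of $j$-dimensional volumes of coordinate projections, which is precisely the shape of the error in the statement. The uniformity in $T$ then drops out of the cell decomposition of your Step 1: over each base cell the fibres are cut out by formulas of a fixed syntactic shape, so the interval count $h$ is uniformly bounded, either by effective CAD degree bounds in the semialgebraic case or, more generally, by the uniform finiteness of connected components of fibres in definable families. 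Your alternative route, parametrising $\partial(C_i)_T$ by a bounded number of Lipschitz maps with control tied to the volumes of the coordinate projections, is not automatic from CAD: Lipschitz constants of cell boundaries can blow up inside a family even when the number of cells stays bounded (a cell bounded by a graph whose slope degenerates is enough to break it), and even when the diameter is controlled the naive bound is $O(kL^{n-1})$, which overshoots $\max_j \Vol(Z_{T,j})$ for elongated fibres. Repairing this would require a Yomdin--Gromov-type uniform parametrisation theorem, which is considerably heavier machinery than what Barroero--Widmer use. So the ``main obstacle'' you highlight is not the obstacle the cited proof actually faces; what drives the argument is the cheap uniform bound on the number of connected components of axis-parallel slices of definable-family fibres, combined with Davenport's original interval-counting lemma.
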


\subsection{The equidistribution theorem}\label{subsec_equidistributiontheorem}

Define the height of a binary form $f(x,y) = f_0x^n + \cdots + f_n y^n\in \R[x,y]$ by the formula $\height(f) = \max |f_i|$.
Let $\mathcal{F}(X)$ be the set of integral binary forms $f = f_0x^{n} + f_1x^{n-1}y + \cdots + f_{n}y^n \in \Z[x,y]$ of nonzero discriminant and of height $<X$. 
\begin{lemma}\label{lemma_F(X)_expectedsize}
    $\# \mathcal{F}(X) = 2^{n+1} X^{n+1} + O(X^{n})$.
\end{lemma}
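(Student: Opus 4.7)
The plan is to reduce the count to two easier pieces: first, the total number of integer vectors $(f_0,\dots,f_n)\in\Z^{n+1}$ of height $<X$, and second, a negligible correction for those $(f_0,\dots,f_n)$ lying on the discriminant locus.

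For the first piece, the box $B_X=\{(f_0,\dots,f_n)\in\R^{n+1}: \max_i|f_i|<X\}$ has volume $(2X)^{n+1}=2^{n+1}X^{n+1}$. An entirely elementary count gives $\#(B_X\cap\Z^{n+1})=(2\lceil X\rceil-1)^{n+1}=2^{n+1}X^{n+1}+O(X^n)$; alternatively, one may apply Proposition \ref{prop_countlatticepointsbarroero} with the semialgebraic set $Z=\{(X,f)\in\R\times\R^{n+1}:X>0,\ \max_i|f_i|<X\}$, noting that the orthogonal projections of $Z_X$ to $j$-dimensional coordinate hyperplanes each have volume $O(X^j)$, so the error from the proposition is $O(X^n)$.

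For the second piece, the discriminant $\Delta(f)\in\Z[f_0,\dots,f_n]$ is a nonzero homogeneous polynomial of degree $2n-2$, and so the zero set $\{f\in\R^{n+1}:\Delta(f)=0\}$ is a proper real algebraic subset of $\R^{n+1}$, in particular a semialgebraic subset of Lebesgue measure zero. Applying Proposition \ref{prop_countlatticepointsbarroero} to the semialgebraic set $Z'=\{(X,f):X>0,\ \max_i|f_i|<X,\ \Delta(f)=0\}$, the fiber $Z'_X$ has volume $0$ and its projections to $j$-dimensional coordinate hyperplanes are contained in boxes of side $O(X)$, hence of volume $O(X^j)\le O(X^n)$. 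Thus the number of integer forms of height $<X$ with vanishing discriminant is $O(X^n)$.

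Subtracting the second piece from the first yields $\#\mathcal{F}(X)=2^{n+1}X^{n+1}+O(X^n)$. The only mild subtlety is keeping track of the implied constants when invoking the counting proposition, but since the family $Z$ (and $Z'$) is fixed and depends only on $n$, the implied constants depend only on $n$, which is what is needed.
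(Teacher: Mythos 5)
Your proof is correct and follows the same decomposition as the paper: count all integer $(n+1)$-tuples of height $<X$ (giving the main term $2^{n+1}X^{n+1}+O(X^n)$), and subtract the $O(X^n)$ tuples with vanishing discriminant. The only difference is the tool used for the second estimate: the paper cites \cite[Lemma 3.1]{bhargava2014geometric}, a black-box Davenport-type bound, whereas you derive the same bound from Proposition~\ref{prop_countlatticepointsbarroero} (Barroero--Widmer), which is already stated in the paper. That is a reasonable variant and makes the argument self-contained within the paper; your application of the proposition to the semialgebraic set $Z'$ is correct, since the fibre $Z'_X$ is a bounded measure-zero hypersurface in $\R^{n+1}$ whose coordinate projections have volume $O(X^j)$ for $j\le n$, yielding the claimed $O(X^n)$ error.
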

\begin{proof}
    We need to show that there are $O(X^{n})$ polynomials with $\height(f) < X$ and $\Delta(f) = 0$. This follows immediately from \cite[Lemma 3.1]{bhargava2014geometric}.
\end{proof}

Define the height of an element $(A,B)\in V(\R)$ to be the height of its invariant binary form: $\height(A,B) = \height(f_{A,B})$.
We say an element $(A,B)\in V(\Z)$ is degenerate if $\Delta(A,B) = \disc(f_{A,B})=0$, and nondegenerate otherwise.
For any subset $S\subset V(\Z)$, write $S^{\text{nd}}$ for the subset of nondegenerate elements.
Given $X\in \R_{> 0}$ and an $\SL_n(\Z)$-invariant subset $S\subset V(\Z)$, let $N(S;X)$ be the number of nondegenerate $\SL_n(\Z)$-orbits in $S$ of height $< X$.

Bhargava \cite[\S4,Theorem 8]{bhargava2015mosthyperellipticarepointless} has determined asymptotics for the number of nondegenerate orbits in $V(\Z)$ of bounded height under the action of $\SL_n^{\pm}(\Z) = \GL_n(\Z)$.
The same computation applies to nondegenerate $\SL_n(\Z)$-orbits, showing that there exists a constant $c>0$ such that 
\begin{align}\label{eq_asymptoticnumberofintegralorbits}
    N(V(\Z);X) = c X^{n+1} + o(X^{n+1})
\end{align}
as $X\rightarrow +\infty$.
The purpose of this section is to show, in analogy with \cite[Theorem 3.8]{lagathorne2024smallheightoddhyperelliptic}, that the reduction covariant \eqref{equation: reduction covariant integral orbits} equidistributes over nondegenerate orbits, in the following sense:

\begin{theorem}\label{theorem: equidistribution reduction covariant}
    Let $U\subset \SL_n(\Z)\backslash X$ be a Borel-measurable subset whose boundary has measure zero.
    Then 
    \begin{align*}
        N(V(\Z)\cap \mathcal{R}^{-1}(U); X) = \frac{\mu(U)}{\mu(\SL_n(\Z)\backslash X) }\cdot N(V(\Z);X)+ o(X^{n+1}).
    \end{align*}
\end{theorem}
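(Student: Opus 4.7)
I would follow the geometry-of-numbers averaging method of \cite[\S 4]{bhargava2015mosthyperellipticarepointless}, as adapted in \cite[\S 3]{lagathorne2024smallheightoddhyperelliptic}, carrying the constraint $\mathcal{R}(v) \in U$ through the calculation. Since $V(\R)^{\Delta \neq 0}$ decomposes as the disjoint union of the semialgebraic sets $V^{(m,\tau)}$ for $(m,\tau) \in \Sigma$, and both sides of the asserted asymptotic are additive over this decomposition, it suffices to establish the analogous statement on each component separately. Fix $(m,\tau) \in \Sigma$.

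Let $G_0 \subset \SL_n(\R)$ be a bounded open semialgebraic neighborhood of the identity. By Bhargava's averaging trick, one obtains an identity of the shape
\begin{align*}
N(V(\Z) \cap V^{(m,\tau)} \cap \mathcal{R}^{-1}(U); X) = \frac{1}{|\Stab| \cdot \Vol(G_0)} \int_{g \in \mathfrak{S}} \int_{h \in G_0} \#\{v \in V(\Z)^{\mathrm{nd}} \cap gh \cdot L^{(m,\tau)}_{<X} : \mathcal{R}(v) \in U\}\, m(g)^{-1}\, dh\, dg,
\end{align*}
where $|\Stab|$ is given by Lemma \ref{lemma:realstabilizers} and $L^{(m,\tau)}_{<X}$ denotes the scaling of the fundamental set to height $< X$. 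The crucial point is that, by the choice made at the end of \S\ref{subsec_preliminariescounting} that $\mathcal{R}(L^{(m,\tau)}) = \{[H_0]\}$ (Lemma \ref{lem_reduction_covariant_constant_on_section}) combined with the $\SL_n(\R)$-equivariance of $\mathcal{R}$, any $v = gh v_0$ with $v_0 \in L^{(m,\tau)}$ satisfies $\mathcal{R}(v) = gh \cdot [H_0]$, so the condition $\mathcal{R}(v) \in U$ is equivalent to the condition $gh \in \bar U$ on the integration variables, where $\bar U \subset \SL_n(\R)$ is the preimage of $U$. Counting integer points in $gh \cdot L^{(m,\tau)}_{<X}$ via Proposition \ref{prop_countlatticepointsbarroero} produces a main term $\Vol(L^{(m,\tau)}_{<X})$, independent of $g, h$ by $\SL_n(\R)$-invariance of the measure, plus an error controlled by projection volumes. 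Swapping the order of integration in the main term and applying Lemma \ref{lemma_integral_siegelset} to the inner integral (with $h^{-1}$ in place of $h$) yields the factor $\Vol(G_0) \cdot \mu(U)$; the ratio $\mu(U) / \mu(\SL_n(\Z)\backslash X)$ then appears by comparison with the analogous unrestricted count recalled in \eqref{eq_asymptoticnumberofintegralorbits}, whose leading term is obtained identically but with $\mu(U)$ replaced by $\mu(\SL_n(\Z)\backslash X)$. The error from the projection-volume term in Proposition \ref{prop_countlatticepointsbarroero} and the contribution from the ``cusp'' region of $\mathfrak{S}$ (where the Iwasawa torus coordinates tend to infinity) are both $o(X^{n+1})$ by the estimates already carried out in \cite[\S 4]{bhargava2015mosthyperellipticarepointless} for the unrestricted count, since imposing the constraint $gh \in \bar U$ can only shrink the region being counted.

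The main obstacle is that Lemma \ref{lemma_integral_siegelset} requires $U$ to be good, whereas the hypothesis of the theorem only furnishes a Borel measurable $U$ with $\mu(\partial U) = 0$. This is resolved by an approximation argument using Lemma \ref{lemma_countablebasis_goodsubsets}: sandwich $U$ between good open subsets $U^- \subset U \subset U^+$ with $\mu(U^+) - \mu(U^-) < \epsilon$, which is possible precisely because $\partial U$ has measure zero. Running the argument above with $U$ replaced by $U^\pm$ (using Lemma \ref{lemma_goodsubsetimpliesregionsemialg} to keep the relevant sets semialgebraic so that Proposition \ref{prop_countlatticepointsbarroero} remains applicable) produces upper and lower bounds on $N(V(\Z) \cap \mathcal{R}^{-1}(U); X)$ whose leading terms differ by at most $O(\epsilon) \cdot X^{n+1} + o(X^{n+1})$. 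Letting $\epsilon \to 0$ yields the asserted asymptotic.
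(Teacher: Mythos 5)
Your overall strategy---decomposing over the components $V^{(m,\tau)}$, running Bhargava's averaging with the constraint carried along, using the $\SL_n(\R)$-equivariance of $\mathcal R$ together with Lemma~\ref{lem_reduction_covariant_constant_on_section} to convert $\mathcal R(v)\in U$ into $gh'\in\bar U$, and reducing from Borel $U$ to good $U$ by approximation---is exactly the structure of the paper's proof (Proposition~\ref{prop_equidistribution_realcomponent} followed by a bootstrap).

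However, the final approximation step as you have stated it has a genuine gap. You claim to ``sandwich $U$ between good open subsets $U^-\subset U\subset U^+$,'' and you need the containments so that monotonicity of $N(V(\Z)\cap\mathcal R^{-1}(\cdot);X)$ gives two-sided bounds. But a good subset of $\SL_n(\Z)\backslash X$ is by definition relatively compact, whereas the theorem makes no compactness hypothesis on $U$: if $\bar U$ is unbounded (runs off into the cusp), then no good $U^+$ can contain $U$, so the outer half of the sandwich need not exist. The measure-zero-boundary hypothesis alone does not rescue this, since it controls $\mu(\partial U)$ but not the geometry at infinity. The paper avoids this by approximating only from below: one takes an increasing exhaustion of the open set $U^\circ$ by good subsets to get $\mu(U^\circ)\le\underline\nu(U^\circ)$, and then applies the same one-sided argument to $(\bar U)^c$ (which is the interior of the complement) and uses the complementarity $\underline\nu(W)+\bar\nu(W^c)=\mu(\SL_n(\Z)\backslash X)$ (a consequence of \eqref{eq_asymptoticnumberofintegralorbits}) to deduce $\bar\nu(\bar U)\le\mu(\bar U)$. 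Combined with $\mu(U^\circ)=\mu(U)=\mu(\bar U)$, this squeezes $\underline\nu(U)=\bar\nu(U)=\mu(U)$. If you rephrase your approximation along these lines---good exhaustions of $U^\circ$ and of $(\bar U)^c$, rather than a two-sided sandwich of $U$ itself---your argument matches the paper's and is complete.
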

The proof of Theorem \ref{theorem: equidistribution reduction covariant} follows from a modification of the proof of Bhargava of the asymptotic \eqref{eq_asymptoticnumberofintegralorbits} and is given below.
We first prove a local version for good subsets.
In the notation of \S\ref{subsec_preliminariescounting}, fix a pair $(m,\tau)\in \Sigma$ (see \eqref{eq_indexingsetsigma}), which corresponds to a subset $V^{(m,\tau)}\subset V(\R)$, and write $V(\Z)^{(m,\tau)} = V(\Z) \cap V^{(m,\tau)}$.

Let $\mathcal{J}$ be the rational nonzero constant of \cite[Proposition 16]{bhargava2015mosthyperellipticarepointless}.
Let $r_{m,n}$ equal $\frac{1}{2}\#\Stab_{\SL_n(\R)}(v)$ for any $v\in V^{(m,\tau)}$; this constant is computed explicitly in Lemma \ref{lemma:realstabilizers}.
For a subset $S$ of $V(\R)$ or $I(m)$, write $S_{<X}$ for the set of $s\in S$ with $\height(s)<X$.
Equip $I(m)$ with the measure corresponding to the differential form $df_0\wedge \cdots \wedge df_n$.
Let 
\begin{align*}
    c_{m,\tau} = \mu(\SL_n(\Z)\backslash X)\frac{|\mathcal{J}| \cdot  \Vol(I(m)_{<1})}{r_{m,n}},
\end{align*}
where $v$ denotes any element of $V^{(m,\tau)}$.
Bhargava \cite[Theorem 9]{bhargava2015mosthyperellipticarepointless} showed that 
\begin{align}\label{equation_averageorbitslocalversion}
    N(V(\Z)^{(m,\tau)};X) = c_{m,\tau} X^{n+1} + o(X^{n+1}).
\end{align}
\begin{proposition}\label{prop_equidistribution_realcomponent}
    Let $U\subset \SL_n(\Z)\backslash X$ be a good subset and let $S_U \coloneqq V(\Z)^{(m,\tau)} \cap \mathcal{R}^{-1}(U)$.
    Then 
    \begin{align}\label{equation_equidistribution_realcomponent}
    N(S_U;X) = \frac{\mu(U)}{\mu(\SL_n(\Z)\backslash X)} c_{m,\tau} X^{n+1} + o(X^{n+1}).
\end{align}
\end{proposition}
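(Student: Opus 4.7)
The plan is to adapt the geometry-of-numbers argument of Bhargava \cite[Theorem 9]{bhargava2015mosthyperellipticarepointless} that yields the unconditional count \eqref{equation_averageorbitslocalversion}, while tracking the restriction $\mathcal{R}(A,B) \in U$ as a condition on the group variable. The enabling observation is Lemma \ref{lem_reduction_covariant_constant_on_section}: the reduction covariant is constantly $H_0$ on $L^{(m,\tau)}$, and since $\mathcal{R}$ is $\SL_n(\R)$-equivariant, we have $\mathcal{R}(h\cdot v) = h\cdot H_0$ for any $h \in \SL_n(\R)$ and any $\R_{>0}$-dilate $v$ of an element of $L^{(m,\tau)}$. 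Writing $F_X$ for the set of $\R_{>0}$-dilates of $L^{(m,\tau)}$ of height $<X$ and $\bar U \subset \SL_n(\R)$ for the preimage of $U$, the $\SL_n(\Z)$-orbits contributing to $N(S_U;X)$ are therefore in multiplicity-$m(\cdot)^{-1}/r_{m,n}$ bijection with nondegenerate integral points in $(\Siegel \cap \bar U)\cdot F_X$.

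Next, I would apply Bhargava's averaging trick: fix a bounded semialgebraic open set $G_0\subset \SL_n(\R)$ with $\Vol(G_0)>0$. Since the measure $dh$ on $\SL_n(\R)$ is bi-invariant, unfolding the average expresses, up to a cusp error,
\[ N(S_U;X) = \frac{1}{r_{m,n}\Vol(G_0)} \int_{h \in \Siegel \cap \bar U} m(h)^{-1}\, \#\bigl(hG_0 F_X \cap V(\Z)^{\mathrm{nd}}\bigr)\, dh. \]
Lemma \ref{lemma_goodsubsetimpliesregionsemialg} provides the semialgebraicity needed to apply Proposition \ref{prop_countlatticepointsbarroero}, so on the bulk region of $\Siegel$ the lattice point count may be replaced by $\Vol(hG_0 F_X)$ with projection errors absorbed into the overall error. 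The volume computation \cite[Proposition 16]{bhargava2015mosthyperellipticarepointless} then factors the main term as
\[ \frac{|\mathcal{J}|\,\Vol(I(m)_{<1})}{r_{m,n}}\, X^{n+1} \int_{\Siegel \cap \bar U} m(h)^{-1}\, dh, \]
and Lemma \ref{lemma_integral_siegelset} identifies the remaining integral with $\mu(U)$. Substituting the definition of $c_{m,\tau}$ then yields the asserted \eqref{equation_equidistribution_realcomponent}.

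The main obstacle is the cusp estimate. For $h$ deep in the cusp of $\Siegel$, the volume approximation is dominated by lower-dimensional `degenerate' loci where certain coefficients of $(A,B)$ are forced to vanish, and controlling this rigorously requires the careful partition of the cusp given in \cite[\S4.3--4.4]{bhargava2015mosthyperellipticarepointless}. I would verify that each of Bhargava's cusp bounds is a statement about integration over a region of $\Siegel$ alone, so that intersecting with $\bar U$ can only decrease the contribution and therefore preserves the same $o(X^{n+1})$ estimate. A secondary subtlety, which does not require new ideas, is the factor of $2$ arising from the index of $\SL_n(\Z)$ in $\SL_n^\pm(\Z) = \GL_n(\Z)$ and the parallel index in real stabilizers; this is recorded in Lemma \ref{lemma:realstabilizers} and must be tracked to recover the correct constant $c_{m,\tau}$, in particular handling the split of $V^{(0,1)}$ into $V^{(0,1+)} \sqcup V^{(0,1-)}$.
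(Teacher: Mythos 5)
Your overall strategy coincides with the paper's: use Lemma~\ref{lem_reduction_covariant_constant_on_section} to turn the constraint $\mathcal{R}(v)\in U$ into a group-theoretic condition and then run Bhargava's averaging argument, appealing to Lemma~\ref{lemma_integral_siegelset} at the end. However, the averaged formula you write down is not quite correct. Once the averaging variable is introduced and an element is written as $v = h\,g_0\,\lambda\,\ell$ with $h\in\Siegel$, $g_0\in G_0$, $\lambda\in\Lambda$, $\ell\in L$, the $\SL_n(\R)$-equivariance of $\mathcal{R}$ gives $\mathcal{R}(v) = hg_0\cdot H_0$, so the condition $\mathcal{R}(v)\in U$ translates to $hg_0 \in \bar U$ --- a \emph{joint} constraint on the pair $(h,g_0)$, not on the Siegel variable $h$ alone. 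Your displayed identity restricts the outer integral to $h \in \Siegel\cap\bar U$ but leaves the inner count $\#\bigl(hG_0F_X\cap V(\Z)^{\mathrm{nd}}\bigr)$ unrestricted, so it both overcounts orbits whose covariant sits just outside $U$ but with $hg_0\in\bar U$ for some $g_0\in G_0$, and undercounts orbits with covariant in $U$ but with $g_v\Stab\,G_0^{-1}\not\subset\bar U$. These boundary discrepancies are plausibly $o(X^{n+1})$, but you would need a separate argument for that, and as written the identity does not hold.

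The paper sidesteps this by tracking the joint constraint all the way through: it introduces $Z_1 = \{(g,h)\in\Siegel\times G_0 : gh'\in\bar U\}$ (this is precisely what Lemma~\ref{lemma_goodsubsetimpliesregionsemialg} makes semialgebraic, and you already cite that lemma, so you are half-aware of the issue), and defines the regions $Z_2,\dots,Z_4$ and $B(g,X)$ on top of $Z_1$ before applying Proposition~\ref{prop_countlatticepointsbarroero} and the change-of-measure formula. Only after switching the order of integration in \eqref{eq_proofequidist4} does the constraint for each fixed $h$ become $g\in\bar U h'^{-1}$, at which point Lemma~\ref{lemma_integral_siegelset} applies cleanly, producing $\mu(U)$ independently of $h$. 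Your remarks about the cusp estimates, the semialgebraicity requirements, and the stabilizer constants in Lemma~\ref{lemma:realstabilizers} are all on target; the only real fix needed is to carry the constraint as a condition on $gh'$ rather than on $g$.
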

\begin{proof}
    We will use the notations and choices made in \S\ref{subsec_preliminariescounting}.
    Write $\Lambda = \R_{>0}$, equipped with its multiplicative Haar measure $d^{\times}\lambda$, and let $\Lambda$ act on $V(\R)$ by scalar multiplication.
    Write $L  = L^{(m,\tau)}$.
    Fix a compact, semialgebraic set $G_0\subset \SL_n(\R) \times \Lambda$ of volume $1$, with nonempty interior, that satisfies $K\cdot G_0 = G_0$ and whose projection onto $\Lambda$ is contained in $[1,K_0]$ for some $K_0>1$.
    
    For any subset $S\subset V(\Z)^{(m,\tau)}$, averaging over $G_0$ \cite[Equation (17)]{bhargava2015mosthyperellipticarepointless} shows 
    \begin{align*}
    N(S;X) = \frac{1}{r_{m,n}} \int_{h \in G_0} \#[  S \cap (\Siegel \Lambda h L)_{<X}] \,dh,
    \end{align*}
    with the caveat that elements on the right hand side are weighted by a function similar to \cite[\S6.5, Equation (6.5)]{laga-f4paper}.
    We use the above expression to define $N(S;X)$ for subsets $S\subset V(\Z)^{(m,\tau)}$ that are not necessarily $\SL_n(\Z)$-invariant.
    A change of variables trick \cite[Equation (22)]{BhargavaGross} shows that for every $S\subset V(\Z)^{(m,\tau)}$ we then have
    \begin{align*}
    N(S;X) = \frac{1}{r_{m,n}} \int_{g\in \Siegel} \int_{\lambda \in \Lambda}\#[S \cap (g\lambda G_0 L)_{<X}] m(g)^{-1} \,dg \, d^{\times}\lambda,
    \end{align*}
    with the caveat that an element $v\in S\cap (g\lambda G_0 L)$ on the right hand side is counted with multiplicity $\#\{h\in G_0 \colon v\in g\lambda h L \}$.

    We let $V^{\text{cusp}}$ be the subset of all $(A,B)\in V(\R)$ such that the top left entries of $A$ and $B$ have absolute value $<1$.
    By cutting off the cusp arguments \cite[Proposition 13]{bhargava2015mosthyperellipticarepointless}, $N(S_U;X) = N(S_U';X)+ o(X^{n+1})$, so it remains to estimate $N(S_U';X)$.

    We first make the reduction covariant more explicit.
    If $g\in \Siegel, h = (h',\lambda')\in G_0$ (with $h'\in \SL_n(\R)$ and $\lambda'\in \Lambda$), $\lambda \in \Lambda$ and $\ell \in L$, the $\SL_n(\R)$-equivariance and $\Lambda$-invariance of $\mathcal{R}$ imply that $\mathcal{R}(g h \lambda \ell) = g h' \mathcal{R}(\ell)$.
    We have chosen $L$ so that $\mathcal{R}(\ell) = 1\in \SL_n(\R) / K$, so $gh' \mathcal{R}(\ell) = gh'$ in $X$.
    Write $\bar{U}$ for the preimage of $U$ under the quotient map $\SL_n(\R)\rightarrow \SL_n(\Z)\backslash X$.
    Then we conclude that $\mathcal{R}(gh\lambda \ell) \in U$ if and only if $gh' \in \bar{U}$.

    Let $Z_1 = \{ (g,h) \in \Siegel \times G_0 \mid gh' \in \bar{U}\}$, where we write $h'$ for the projection of $h \in G_0$ onto $\SL_n(\R)$.
    By Lemma \ref{lemma_goodsubsetimpliesregionsemialg}, $Z_1$ is semialgebraic.
    Let $Z_2$ be the graph of the action map $Z_1\times \Lambda\times L\rightarrow V(\R)$ sending $(g,h,\lambda,\ell)$ to $gh\lambda v$.
    Since this map is algebraic, $Z_2$ is semialgebraic.
    So is the projection $Z_3$ of $Z_2$ onto $\Siegel\times V(\R)$.
    Let $Z_4 = \{(g,v,X) \in Z_3\times \R_{>0}\mid \height(v)< X\}$; this is again semialgebraic.
    For every $g\in \Siegel$ and $X\in \R_{>0}$ the set $B(g,X)\coloneqq \{v \in V(\R)\mid (g,v,X)\in Z_4\}$ is semialgebraic and equals $(g\Lambda G_0 L)_{<X} \cap \mathcal{R}^{-1}(U)$.
    We view $B(g,X)$ as a multiset where $v\in B(g,X)$ has multiplicity $\#\{(\lambda,h) \in \Lambda\times G_0 \mid v \in g\lambda h L \}$.
    Then $B(g,X)$ is partitioned into finitely many semialgebraic subsets of constant multiplicity.
    Applying Proposition \ref{prop_countlatticepointsbarroero} to $Z_4$ shows that 
    \begin{align*}
        \#[V(\Z) \cap (B(g,X)\setminus V^{\text{cusp}})] = \Vol(B(g,X) \setminus V^{\text{cusp}}) + E(g,X),
    \end{align*}
    where $E(g,X)$ is the error term. 
    The proof now proceeds in an identical way to that of \cite[Proposition 15]{bhargava2015mosthyperellipticarepointless}: estimating $E(g,X)$ and $\Vol(B(g,X)\cap V^{\text{cusp}})$ shows that
    \begin{align}\label{eq_proofequidist2}
        N(S_U;X) = \frac{1}{r_{m,n}} \int_{g\in \Siegel} \Vol(B(g,X))m(g)^{-1} \, dg + o(X^{n+1}).
    \end{align}
    The change of measure formula of \cite[Proposition 16]{bhargava2015mosthyperellipticarepointless} shows that 
    \begin{align}\label{eq_proofequidist3}
     \frac{1}{2^{m+n}} \int_{g\in \Siegel} \Vol(B(g,X))m(g)^{-1} \,dg = \frac{|\mathcal{J}|}{2^{m+n}} \Vol(I(m)_{<X})\int_{g \in \Siegel} \int_{h\in G_0}  \mathbf{1}_{\{gh' \in \bar{U}\}} m(g)^{-1} \,dh \,dg,
    \end{align}
    where $\mathbf{1}_T$ denotes the indicator function of a set $T$, and where $h'$ denotes the $\SL_n(\R)$-component of $h$.
    By switching the order of integration, Lemma \ref{lemma_integral_siegelset} and using $\Vol(G_0) = 1$, we calculate that
    \begin{align}\label{eq_proofequidist4}
        \int_{g \in \Siegel} \int_{h\in G_0}  \mathbf{1}_{\{gh' \in \bar{U}\}}m(g)^{-1} \, dh \, dg = \int_{h\in G_0} \int_{g\in \Siegel} \mathbf{1}_{\bar{U}h'^{-1}}m(g)^{-1} \,  dg \, dh = \int_{h\in G_0} \mu(U) \, dh= \mu(U).
    \end{align}
    Combining \eqref{eq_proofequidist2}, \eqref{eq_proofequidist3} and \eqref{eq_proofequidist4} shows that
    \begin{align*}
    N(S_U;X) = \mu(U)\frac{|\mathcal{J}|}{r_{m,n}} \Vol(I(m)_{<X}) +o(X^{n+1})=  \mu(U)\frac{|\mathcal{J}|}{r_{m,n}} \Vol(I(m)_{<1})X^{n+1}+o(X^{n+1}),
    \end{align*}
    as required.
\end{proof}

\begin{proof}[Proof of Theorem \ref{theorem: equidistribution reduction covariant}]
    Let 
    \[
    \underline{\nu}(U) = \mu(\SL_n(\Z)\backslash X) \liminf_{X\rightarrow \infty} \frac{N(V(\Z) \cap \mathcal{R}^{-1}(U);X)}{N(V(\Z);X)}
    \]
    and $\bar{\nu}(U)$ be the same expression with $\liminf$ replaced by $\limsup$.
    It suffices to prove that $\underline{\nu}(U) = \bar{\nu}(U) = \mu(U)$.
    If $U$ is good, this follows from summing \eqref{equation_equidistribution_realcomponent} over all $(m,\tau)\in \Sigma$, Equation \eqref{equation_averageorbitslocalversion}, and the fact that $V^s(\R)$ is partitioned into the subsets $V^{(m,\tau)}$.
    To prove the theorem for general $U$, we bootstrap from the case of good subsets (and the trivial case $U = \SL_n(\Z) \backslash X$).

    Let $U^{\circ}$ be the interior of $U$.
    Since $U^{\circ}$ is open, by Lemma \ref{lemma_countablebasis_goodsubsets} there exists an increasing sequence $(U_n)_{n\geq 1}$ of good subsets whose union is $U^{\circ}$.
    For every $n\geq 1$ we have $\underline{\nu}(U_n) \leq \underline{\nu}(U^{\circ})$.
    Since $U_n$ is good, $\underline{\nu}(U_n) = \mu(U_n)$, hence $\mu(U_n) \leq \underline{\nu}(U^{\circ})$ for all $n\geq 1$.
    By continuity of the measure, $\mu(U_n) \rightarrow \mu(U^{\circ})$ as $n\rightarrow \infty$.
    We conclude that $\mu(U^{\circ})\leq \underline{\nu}(U^{\circ})$.
    Let $\bar{U}$ denote the closure of $U$.
    Since the complement of $\bar{U}$ equals the interior of the complement of $U$, the above argument also shows $\bar{\nu}(\bar{U}) \leq \mu(\bar{U})$.

    In conclusion, we have shown that 
    \[
    \mu(U^{\circ}) \leq \underline{\nu}(U^{\circ})\leq \underline{\nu}(U) \leq \bar{\nu}(U) \leq \bar{\nu}(\bar{U}) \leq \mu(\bar{U}).
    \]
    By assumption, $\mu(U^{\circ}) = \mu(\bar{U}) = \mu(U)$, so all the inequalities are in fact equalities, and the theorem follows.
\end{proof}

\subsection{A neighbourhood of the cusp}

Let $\epsilon >0$.
Say an inner product $H$ on $W_{\R} =\R^n$ has an $\epsilon$-small vector if there exists a nonzero $v\in \Z^n$ such that 
\begin{align*}
    (v,v)^{1/2}_H< \epsilon \cdot (\det H)^{1/2n} . 
\end{align*}
This condition only depends on the image $\SL_n(\Z) \cdot [H]$ of $H$ in $\SL_n(\Z) \backslash X$, so it makes sense say that an element of $\SL_n(\Z)\backslash X$ has an $\epsilon$-small vector.
We now show that such elements form a small measure subset of $\SL_n(\Z)\backslash X$.

Recall from \S\ref{subsec_preliminariescounting} that $\Siegel  = \omega T_c K$ and that $\varphi\colon \Siegel \rightarrow \SL_n(\Z)\backslash X$ denotes the projection.
After possibly enlarging $\Siegel$, we may and do assume that $c<1$.
Let $T(\epsilon) = \{(t_1,\dots,t_n) \in T_c \colon t_1 > c^{n-1}/\epsilon\}$ and $U_{\epsilon} = \varphi(\omega T(\epsilon) K)\subset \SL_n(\Z)\backslash X$.

\begin{proposition}\label{prop_good_neighbourhoods_of_cusp}
    \begin{enumerate}
        \item The set $U_{\epsilon}$ is Borel-measurable, its boundary has measure zero, and $\mu(U_{\epsilon})\rightarrow 0$ as $\epsilon \rightarrow 0$.
        \item If $H$ is an inner product on $W_{\R}$ that admits an $\epsilon$-small vector, then the image of $H$ in $\SL_n(\Z)\backslash X$ lies in $U_{\epsilon}$.
    \end{enumerate}
\end{proposition}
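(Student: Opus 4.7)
The plan is to handle part (2) first, since its proof dictates the shape of $U_\epsilon$, and then use that structure to verify part (1). For part (2), I will use the Iwasawa decomposition together with the identities $(v,v)_H = |g^{-1} v|^2$ and $\det H = 1$ that hold for $H = g \cdot H_0$ with $g \in \SL_n(\R)$, so that the $\epsilon$-smallness condition reduces to $|g^{-1} v| < \epsilon$. This condition is $\SL_n(\Z)$-invariant on the class $[H]$ (the small vector $v$ transforms to $\gamma v \in \Z^n$ when $H \mapsto \gamma \cdot H$), so I may assume the representative lies in $\Siegel$. Writing $g = ntk$ with $t = (t_1,\dots,t_n) \in T_c$, and $v = \sum v_i e_i \in \Z^n \setminus \{0\}$ with largest nonzero index $i_0$, the upper-unipotence of $n^{-1}$ gives $(n^{-1} v)_{i_0} = v_{i_0}$, whence $|g^{-1} v|^2 = |t^{-1} n^{-1} v|^2 \geq v_{i_0}^2/t_{i_0}^2 \geq 1/t_{i_0}^2$. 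The Siegel inequalities $t_j/t_{j+1} > c$ yield $t_{i_0} < t_1/c^{i_0 - 1} \leq t_1/c^{n-1}$, so $|g^{-1} v| \geq c^{n-1}/t_1$. The hypothesis $|g^{-1} v| < \epsilon$ then forces $t_1 > c^{n-1}/\epsilon$, placing $g \in \omega T(\epsilon) K$ and so $[H] \in U_\epsilon$.

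For the measurability in part (1), the set $\omega T(\epsilon) K$ is an open semialgebraic subset of $\SL_n(\R)$; its $\SL_n(\Z)$-saturation is a countable union of Borel sets, hence Borel, and $U_\epsilon$ is its image in $\SL_n(\Z) \backslash X$. The boundary $\partial U_\epsilon$ is contained in the image of $\partial(\omega T(\epsilon) K)$, a finite union of semialgebraic sets of strictly smaller dimension (saturate one of the defining inequalities: the topological boundary of $\omega$, one of the ratios $t_j/t_{j+1} = c$, or $t_1 = c^{n-1}/\epsilon$). Such subsets have Lebesgue measure zero in $\SL_n(\R)$, and the locally measure-preserving quotient map sends them to measure-zero sets in $\SL_n(\Z) \backslash X$.

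Finally, $\mu(U_\epsilon) \to 0$ will follow from continuity of the finite measure $\mu$: the family $(U_\epsilon)_{\epsilon > 0}$ is decreasing in $\epsilon$, and $\bigcap_{\epsilon > 0} U_\epsilon = \emptyset$. Indeed, any $[H]$ in the intersection would admit, for every $\epsilon > 0$, a Siegel representative $g_\epsilon$ with $t_1(g_\epsilon) > c^{n-1}/\epsilon$; but the Siegel property ensures that each fixed class $[H]$ has only finitely many preimages in $\Siegel$, so the set of $t_1$-values over such representatives is bounded, a contradiction. The main obstacle is the lattice-theoretic input behind part (2) --- controlling the shortest lattice vector in a Siegel-reduced frame from below by $c^{n-1}/t_1$ --- and once this inequality is established the remaining steps are formal manipulations with semialgebraic sets and continuity of measure.
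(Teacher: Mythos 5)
Your proof is correct and follows essentially the same route as the paper: for part (2) you reduce to a Siegel representative $g = ntk$ and derive the lower bound $|g^{-1}v| \geq t_{i_0}^{-1} > c^{n-1}/t_1$ exactly as the paper does (just phrased in coordinates rather than via the orthonormal basis $f_i = g e_i$), and for part (1) your semialgebraicity and decreasing-sets arguments mirror the paper's use of $\vol(\Siegel) < \infty$ together with $\bigcap_{\epsilon>0} \omega T(\epsilon) K = \varnothing$. The only difference is cosmetic — your emptiness-of-intersection claim is justified via the Siegel finiteness property on a fixed class rather than by noting $\omega T(\epsilon) K$ itself shrinks to nothing inside the finite-volume set $\Siegel$.
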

\begin{proof}
    Since the boundary of a semialgebraic set has strictly smaller dimension \cite[Proposition 2.8.13]{BCR-realalgebraicgeometry} and $\varphi^{-1}(U_{\epsilon})$ is semialgebraic, the boundary of $\varphi^{-1}(U_{\epsilon})$ has measure zero, hence the same is true for $U_{\varepsilon}$. To show that $\mu(U_{\epsilon})\rightarrow 0$ as $\epsilon\rightarrow 0$, it suffices to show that $\vol(\omega T(\epsilon) K) \rightarrow 0$ as $\epsilon \rightarrow 0$. This is true because $\vol(\Siegel)<\infty$ and $\cap_{\epsilon >0} (\omega T(\epsilon) K) =0$.

    To prove the second part, we may assume that $H$ has determinant $1$ and so (since $\varphi$ is surjective) that $H = g\cdot H_0 = g^{-t} H_0 g^{-1}$ for some $g =nt$ with $n\in \omega$ and $t = (t_1,\dots, t_n)\in T_c$. 
    Let $v\in W=\Z^n$ be a nonzero element with $(v,v)_H^{1/2} < \epsilon$.
    We will show that this implies $t_1> c^{n-1}/\epsilon$.
    Write $e_1,\dots,e_n$ for the standard basis of $\Z^n$ and let $f_i = g \cdot e_i$ for all $i$. 
    Since $(v,w)_{H} = (g^{-1}v,g^{-1}w)_{H_0}$ for all $v,w\in W_{\R}$, the basis $f_1,\dots,f_n$ is orthonormal with respect to $(-,-)_H$.
    Write $v = \sum_{i=1}^k m_i e_i$ where $m_i$ are integers, $1\leq k\leq m$ and $m_k\neq 0$.
    A computation reveals that $e_i \in t_i^{-1} f_i + \text{span}_{\R}\{f_1,\dots,f_{i-1}\}$ for each $i$, so $v \in m_k t_k^{-1} f_k + \text{span}_{\R}\{f_1,\dots,f_{k-1}\}$.
    By the orthonormality of the $f_i$ and the fact that $m_k$ is a nonzero integer,
    \begin{align*}
        (v,v)^{1/2}_H\geq |m_k|t_k^{-1}\geq t_k^{-1}.
    \end{align*}
    Therefore $t_k > 1/\epsilon$.
    Since $(t_1,\dots,t_n) \in T_c$, we have $t_i > c\cdot  t_{i+1}$ for all $1\leq i \leq k-1$, so $t_1 > c^{k-1} t_k > c^{k-1}/ \epsilon > c^{n-1}/\epsilon$, as required.
\end{proof}

Combining Proposition \ref{prop_good_neighbourhoods_of_cusp} with the equidistribution theorem (Theorem \ref{theorem: equidistribution reduction covariant}) and \eqref{eq_asymptoticnumberofintegralorbits} immediately shows:

\begin{corollary}\label{corollary_polyswithsmallnormvectorrare}
    There exist a constant $c_1>0$ such that for every $\epsilon>0$,
    \begin{align}\label{equation: lim sup less than mu(U)}
        \limsup_{X\rightarrow \infty} \frac{\#\{f\in \mathcal{F}(X) \colon \exists\, (A,B)\in V_f(\Z) \text{ and } w\in W \text{ with } ( w,w)_{H_{A,B}}^{1/2} < \epsilon  (\det H_{A,B})^{1/2n} \} }{ \#\mathcal{F}(X)} \leq c_1\cdot \mu(U_{\epsilon}).
    \end{align}
    Moreover, $\mu(U_{\epsilon})\rightarrow 0$ as $\epsilon \rightarrow 0$.
\end{corollary}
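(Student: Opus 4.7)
The plan is to combine the three indicated ingredients in a direct way, with the only subtle point being the transition from counting binary forms to counting orbits.

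First I would observe that if $f \in \mathcal{F}(X)$ lies in the set on the left-hand side of (\ref{equation: lim sup less than mu(U)}), then there is an $(A,B) \in V_f(\Z)$ whose reduction covariant $[H_{A,B}]$ admits an $\epsilon$-small vector in $\Z^n$, and by part (2) of Proposition \ref{prop_good_neighbourhoods_of_cusp}, the image of $[H_{A,B}]$ in $\SL_n(\Z)\backslash X$ lies in $U_\epsilon$. Hence the $\SL_n(\Z)$-orbit of $(A,B)$ lies in $V(\Z) \cap \mathcal{R}^{-1}(U_\epsilon)$, and has height $<X$. Since the invariant form map is $\SL_n(\Z)$-invariant, the number of such $f$ is bounded above by the number of nondegenerate $\SL_n(\Z)$-orbits in $V(\Z) \cap \mathcal{R}^{-1}(U_\epsilon)$ of height $<X$, namely $N(V(\Z)\cap \mathcal{R}^{-1}(U_\epsilon); X)$.

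Next I would apply the equidistribution theorem, which requires checking that $U_\epsilon$ is Borel-measurable with boundary of measure zero — this is exactly part (1) of Proposition \ref{prop_good_neighbourhoods_of_cusp}. Combined with (\ref{eq_asymptoticnumberofintegralorbits}), Theorem \ref{theorem: equidistribution reduction covariant} yields
\[
N(V(\Z) \cap \mathcal{R}^{-1}(U_\epsilon); X) = \frac{\mu(U_\epsilon)}{\mu(\SL_n(\Z) \backslash X)} c X^{n+1} + o(X^{n+1}),
\]
where $c$ is the constant from (\ref{eq_asymptoticnumberofintegralorbits}) and the $o(X^{n+1})$ term depends on $\epsilon$ but this is harmless since we take a $\limsup$ in $X$ for fixed $\epsilon$.

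Dividing by $\#\mathcal{F}(X) = 2^{n+1} X^{n+1} + O(X^n)$ from Lemma \ref{lemma_F(X)_expectedsize} and taking $\limsup_{X \to \infty}$, the $o(X^{n+1})$ error term disappears and we get the desired inequality with
\[
c_1 = \frac{c}{2^{n+1}\, \mu(\SL_n(\Z)\backslash X)},
\]
which is independent of $\epsilon$. The second assertion $\mu(U_\epsilon) \to 0$ as $\epsilon \to 0$ is the remaining content of part (1) of Proposition \ref{prop_good_neighbourhoods_of_cusp}. There is no real obstacle here — all the difficult work has been done in the preceding results — the only thing to watch is that when passing from $f$'s to orbits we get a (harmless) upper bound, and that the implicit constant in the $o(X^{n+1})$ from equidistribution is allowed to depend on the (fixed) set $U_\epsilon$.
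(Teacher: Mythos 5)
Your proof is correct and matches the paper's intent exactly; the paper omits the argument as an immediate consequence of the three cited results, and you have filled in the details accurately — in particular the key observation that the assignment of a bad $f$ to an orbit $\SL_n(\Z)\cdot(A,B)$ with $f_{A,B}=f$ is injective, giving $\#\{\text{bad } f \in \mathcal{F}(X)\} \leq N(V(\Z)\cap\mathcal{R}^{-1}(U_\epsilon);X)$, and the check that $U_\epsilon$ satisfies the hypotheses of the equidistribution theorem. The resulting constant $c_1 = c/(2^{n+1}\mu(\SL_n(\Z)\backslash X))$ is visibly independent of $\epsilon$, as required.
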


\section{A height lower bound for divisors}\label{sec: a height lower bound for divisors}

\subsection{A density $1$ family}

Let $g \geq 1$. Recall from \S\ref{subsec_equidistributiontheorem} that for a real number $X>0$, $\mathcal{F}(X)$ denotes the set of integral binary forms $f(x,y) \in \Z[x,y]$ of degree $n=2g+2$, nonzero discriminant and height at most $X$, and that $\#\mathcal{F}(X) = (2X)^{n+1} +O(X^n)$. 

For $\delta>0$, let $\mathcal{F}_{\delta}(X)$ be the subset of $f = f_0 x^n + \cdots + f_n y^n\in \mathcal{F}(X)$ satisfying the following properties:
\begin{enumerate}
    \item $f(x,y)$ is an irreducible polynomial in $\Q[x,y]$.
    \item We have $|\disc(f)|\geq X^{2n-2-\delta}$. 
\end{enumerate}
Note that the first condition implies that $f_0f_n \neq 0$.

These conditions cut out a density-$1$ family:

\begin{proposition}\label{prop_subfamilyhasdensity1}
    We have $\mathcal{F}_{\delta}(X) = (2X)^{n+1} + o(X^{n+1})$.
\end{proposition}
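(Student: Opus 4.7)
The plan is to show that $\#(\mathcal{F}(X) \setminus \mathcal{F}_{\delta}(X)) = o(X^{n+1})$; combined with Lemma \ref{lemma_F(X)_expectedsize}, this gives the proposition. Writing $R(X)$ for the set of $f \in \mathcal{F}(X)$ that are reducible in $\mathbb{Q}[x,y]$, and $D(X) = \{f \in \mathcal{F}(X) : |\disc(f)| < X^{2n-2-\delta}\}$, the complement is contained in $R(X) \cup D(X)$, so it suffices to bound each of these by $o(X^{n+1})$.

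For the reducible count: any $f \in R(X)$ factors (by Gauss) as a product $f = g \cdot h$ of integral forms with $\deg g = d$, $\deg h = n-d$, and $1 \leq d \leq n-1$. Using the standard fact that $\height(gh)$ is comparable to $\height(g)\height(h)$ up to a multiplicative constant depending only on $n$ (Gelfond's inequality), I would sum: for each fixed $d$ there are $O(H_1^{d+1})$ integer forms of degree $d$ and height at most $H_1$, and similarly for $h$. Summing over $d$ and over pairs $(H_1, H_2)$ with $H_1 H_2 \ll X$ yields $\#R(X) = O(X^n \log X) = o(X^{n+1})$. This is a standard sieve-style estimate.

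For the small-discriminant count: the discriminant $\Delta(f)$ is a nonzero polynomial of total degree $2n-2$ in the $n+1$ coefficients $f_0, \ldots, f_n$. The strategy is a slicing argument. Fix all coefficients except one, say $f_n$; then $\Delta$ restricts to a polynomial in $f_n$ of degree at most $2n-2$, which is not identically zero for generic choices of $f_0, \ldots, f_{n-1}$ (as one checks by an explicit specialisation, or by observing that $\Delta$ is irreducible as a polynomial in all its variables). The set on which a univariate polynomial of degree $k$ has absolute value less than $Y$ is contained in a union of at most $k$ intervals of total length $O(Y^{1/k})$, so contains $O(Y^{1/k} + k)$ integers. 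Taking $Y = X^{2n-2-\delta}$ and $k \leq 2n-2$, and summing over the $O(X^n)$ choices of the other coefficients (with a separate bound $O(X^n)$ coming from the ``bad'' slices where $\Delta$ vanishes identically in $f_n$), yields $\#D(X) = O(X^{n+1-\delta/(2n-2)})$, which is $o(X^{n+1})$. A cleaner reference is the type of bound recorded in \cite[Lemma 3.1]{bhargava2014geometric}, which is already invoked in the proof of Lemma \ref{lemma_F(X)_expectedsize}.

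The main obstacle is the $D(X)$ estimate, where one must ensure enough nondegeneracy of $\Delta$ as a polynomial in a single well-chosen variable, and track how the savings in the exponent depend on $\delta$. The reducibility count is essentially textbook. Both ingredients are standard geometry-of-numbers arguments; no new ideas are required beyond those already used elsewhere in the paper.
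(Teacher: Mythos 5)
Your decomposition of the complement into a reducible part $R(X)$ and a small-discriminant part $D(X)$ is the same as the paper's. For $R(X)$, the paper simply cites Hilbert's irreducibility theorem; your direct count via Gauss's lemma and Gelfond's inequality is a perfectly good elementary alternative and even gives a power-saving. However, the $D(X)$ estimate has a genuine gap.

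The slicing argument as you state it does not work, because the degree of $\Delta(f)$ in any \emph{single} coefficient $f_i$ is much smaller than the total degree $2n-2$. Viewing $\Delta$ as a polynomial in $f_n$ (with $f_0,\dots,f_{n-1}$ fixed), the Sylvester-matrix description of $\mathrm{Res}(f,\partial f/\partial x)$ shows that $f_n$ appears only in the $n-1$ rows coming from $f$, so $\deg_{f_n}\Delta \le n-1$ (and one checks it equals $n-1$; for the middle coefficients the degree is at most $n$). Thus, on a slice where $\Delta$ is a nonconstant polynomial in $f_n$ of degree $d\le n$, the set $\{f_n : |\Delta|<Y\}$ has measure $O\bigl((Y/|a_d|)^{1/d}\bigr)\ge O(Y^{1/d})$ with $Y=X^{2n-2-\delta}$, and
\[
Y^{1/d}=X^{(2n-2-\delta)/d}\ge X^{(2n-2-\delta)/n}=X^{2-2/n-\delta/n},
\]
which exceeds $X$ for all $n\ge 3$ (hence for all $n=2g+2\ge 4$). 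So the per-slice bound is worse than the trivial bound $O(X)$ coming from $|f_n|<X$, and summing over slices gives nothing better than $O(X^{n+1})$. Your step ``a polynomial of degree at most $2n-2$'' tacitly assumes the degree in the chosen variable can be taken as large as $2n-2$, which is false; that is exactly where the savings were supposed to come from. The auxiliary reference you suggest, \cite[Lemma 3.1]{bhargava2014geometric}, only controls the locus $\Delta=0$ (it is invoked in Lemma \ref{lemma_F(X)_expectedsize} for that purpose) and says nothing about $0<|\Delta|<Y$. The paper instead cites \cite[Lemma 6.1]{BSW-squarefreeII}, whose proof is more delicate than one-variable slicing; if you want a self-contained argument you would need at least an iterated slicing keeping track of the (integer-valued, hence $\ge 1$ or $=0$) leading coefficient of $\Delta$ in $f_n$ as a polynomial in the remaining coefficients, or a different mechanism entirely.
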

\begin{proof}
    We need to show that the number of $f\in \mathcal{F}(X)$ failing the first or second condition is $o(X^{n+1})$.
    For the first condition, this follows from Hilbert's irreducibility theorem; for the second, this is \cite[Lemma 6.1]{BSW-squarefreeII}.
\end{proof}

The relevance of this family comes from the next elementary proposition. 
To state it, let $f\in \mathcal{F}(X)$ and write
\begin{align}\label{eq_realdecompositionpoly}
    f(x,y) = c \prod_{i=1}^r (x-\omega_iy) \prod_{j=1}^k (\eta_jx-y), 
\end{align}
where $c, \omega_i,\eta_j\in \mathbb{C}$, $|\omega_i|\leq 1$ and $|\eta_j|<1$ for all $i,j$.
Write $f_x(x,y)$ for the partial derivative of $f(x,y)$ with respect to $x$, and similarly for $f_y(x,y)$.

Let $U(x,y)  = u_0 x^{m} + u_1x^{m-1} y + \cdots + u_{m}y^m\in \Z[x,y]$ be an integral binary form of degree $m$. 
Write $h(U) = \log(\max(|u_0|, \dots,|u_m|))$.

\begin{proposition}\label{prop_massagingformuladensity1family}
    If $f\in \mathcal{F}_{\delta}(X)$ then, in the above notation, we have
    \begin{align}\label{equation: upper bound formula density 1 family}
        n\cdot \log\left(\sum_{i=1}^r \frac{|U(\omega_i,1)|}{|f_x(\omega_i,1)|}
        +\sum_{j=1}^k \frac{|U(1,\eta_j)|}{|f_y(1,\eta_j)|}\right) - \log |c| 
        \leq n\cdot h(U) - (n+1)\log X +  n \delta \log X + \kappa_{n,m},
    \end{align}
    where $\kappa_{n,m}$ is a constant that only depends on $n$ and $m$.
\end{proposition}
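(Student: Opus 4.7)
The strategy is to bound $S$, the argument of the outer logarithm on the left of \eqref{equation: upper bound formula density 1 family}, from above using the factorization \eqref{eq_realdecompositionpoly} of $f$, the hypothesis $|\disc(f)|\geq X^{2n-2-\delta}$, and a Mahler measure bound on $|c|$.

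The key computational input is the identity
\[
\prod_{i=1}^{r}|f_x(\omega_i, 1)|\cdot \prod_{j=1}^{k}|f_y(1,\eta_j)| = \frac{|\disc(f)|}{|c|^{n-2}},
\]
which follows from differentiating \eqref{eq_realdecompositionpoly} to get
\begin{align*}
f_x(\omega_i,1) &= c\prod_{i'\neq i}(\omega_i - \omega_{i'})\prod_j(\eta_j\omega_i - 1),\\
f_y(1,\eta_j) &= -c\prod_i(1-\omega_i\eta_j)\prod_{j'\neq j}(\eta_{j'}-\eta_j),
\end{align*}
and comparing with the product formula $|\disc(f)| = |c|^{2n-2}\prod_{i<i'}|\omega_i-\omega_{i'}|^2\prod_{j<j'}|\eta_j-\eta_{j'}|^2\prod_{i,j}|1-\omega_i\eta_j|^2$ from \S\ref{subsec_basicdefs}.

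Next, since $|\omega_i|, |\eta_j|\leq 1$, the triangle inequality gives $|U(\omega_i,1)|, |U(1,\eta_j)|\leq (m+1)e^{h(U)}$, and likewise each denominator $|f_x(\omega_i,1)|$ or $|f_y(1,\eta_j)|$ is bounded by $C_1(n)\cdot X$, since $|f_i|\leq X$. Writing $b_1,\dots,b_n$ for these denominators, the elementary inequality $\prod_\ell b_\ell \leq b_{\min}\cdot b_{\max}^{n-1}$, combined with the identity above and the hypothesis $|\disc(f)|\geq X^{2n-2-\delta}$, yields $b_{\min}\geq X^{n-1-\delta}/(C_1^{n-1}|c|^{n-2})$ up to a constant. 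Using $\sum_\ell 1/b_\ell \leq n/b_{\min}$, we obtain
\[
S \leq n(m+1)C_1^{n-1}e^{h(U)}|c|^{n-2}X^{-(n-1)+\delta}
\]
up to a constant depending only on $n$.

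Finally, taking logarithms, multiplying by $n$, and subtracting $\log|c|$ gives
\[
n\log S - \log|c| \leq nh(U) + (n^2-2n-1)\log|c| - n(n-1)\log X + n\delta\log X + O_{n,m}(1).
\]
Since $|c|$ equals the Mahler measure $M(f)\leq \sum_i|f_i|\leq (n+1)X$, and $n^2-2n-1 > 0$ for $n\geq 4$, we substitute $\log|c|\leq \log X + O_n(1)$ and invoke the arithmetic identity $(n^2-2n-1) - n(n-1) = -(n+1)$ to conclude. The main obstacle is the exact bookkeeping at this final step: the exponent in $|\disc(f)|\geq X^{2n-2-\delta}$ is tuned precisely so that the loss of $(n-1)\log X$ from the crude bound $b_{\max}\leq C_1 X$ and the $(n^2-2n-1)\log X$ contribution from $\log|c|$ combine to produce exactly the coefficient $-(n+1)$ of $\log X$, with no slack to spare. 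The role of the irreducibility hypothesis in $\mathcal{F}_\delta(X)$ is minor here, being used only to ensure $f\neq 0$ so that \eqref{eq_realdecompositionpoly} makes sense with $c\neq 0$.
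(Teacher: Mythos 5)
Your proof is correct, but takes a genuinely different route from the paper at two points, and it is worth recording the comparison.

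First, where the paper bounds each individual ratio $|\disc(f)|/|f_x(\omega_i,1)|$ from above by $|c|^{2n-3}2^{(n-1)^2}$ (using that each factor $|\omega_i-\omega_j|$, $|\eta_i-\eta_j|$, $|\omega_i\eta_j-1|$ is at most $2$), you instead exploit the exact multiplicative identity
\[
\prod_{i=1}^r |f_x(\omega_i,1)|\cdot\prod_{j=1}^k|f_y(1,\eta_j)| \;=\; \frac{|\disc(f)|}{|c|^{n-2}},
\]
paired with the upper bound $b_\ell \leq C_1(n)X$ on each denominator, which converts the discriminant lower bound into a lower bound on $b_{\min}$. Both manipulations trade an upper bound on $|c|$ for a lower bound on $\disc(f)$, and after substituting $\log|c|\leq\log X + O(1)$ the $X$-exponents agree (your $|c|^{n-2}X^{n-1}$ versus the paper's $|c|^{2n-3}$ both give $X^{2n-3}$), so you land on the same coefficient $-(n+1)$. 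Second, for the bound on $|c|$ you use Landau's/Mahler's inequality $|c|=M(f(X,1))\leq\|f\|_2\leq\|f\|_1\leq (n+1)X$, whereas the paper cites a theorem of Soundararajan to derive $|c|^2\leq n\sum|f_i|^2$. Your replacement is classical and self-contained, and is arguably a simplification; the two bounds differ only in the constant. One small imprecision in your final remark: the irreducibility hypothesis in $\mathcal{F}_\delta(X)$ is used to ensure $f_0 f_n \neq 0$ (so no factor $\eta_j$ vanishes and the Mahler-measure identification $|c|=M(f(X,1))$ holds in the form you use it), not merely $f\neq 0$; the latter is already guaranteed by $\disc(f)\neq 0$.
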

\begin{proof}
    We have the formulae 
    \begin{align*}
        \disc(f) &= c^{2n-2} \prod_{1\leq i<j\leq r} (\omega_i-\omega_j)^2 \prod_{1\leq i<j\leq k}(\eta_i-\eta_j)^2 \prod_{1 \leq i \leq r, 1\leq j\leq k} (\omega_i\eta_j-1)^2 , \\
        f_x(\omega_i,1) &= c\prod_{j\neq i}(\omega_i -\omega_j) \prod_{j=1}^k(\omega_i\eta_j-1) \text{ for each }i=1,\dots,r.
    \end{align*}
    Each term $|\omega_i-\omega_j|, |\eta_i-\eta_j|, |\omega_i\eta_j-1|$ is $\leq 2$. 
    Therefore, $|\disc(f)|/|f_x(\omega_i,1)|$ is $|c|^{2n-3}$ times a product of $n(n-1)-(n-1)$ terms that are each $\leq 2$.
    Hence $|\disc(f)|/|f_x(\omega_i,1)|\leq |c|^{2n-3} 2^{(n-1)^2}$.
    Since $|\disc(f)|\geq X^{2n-2-\delta}$ and $|U(\omega_i,1)|\leq (m+1) \max(|u_0|, \cdots, |u_m|)$, we have 
    \begin{align*}
        \frac{|U(\omega_i,1)|}{|f_x(\omega_i,1)|}\leq (m+1) \max(|u_i|) 2^{(n-1)^2} |c|^{2n-3}/X^{2n-2-\delta}.
    \end{align*}
    The same upper bound applies to $\frac{|U(1,\eta_j)|}{|f_y(1,\eta_j)|}$, and summing these bounds shows that the left hand side of \eqref{equation: upper bound formula density 1 family} is at most
    \begin{align}\label{eq1_proofformuladensity1}
        n\log(n(m+1)2^{(n-1)^2}) + n\cdot h(U) + (n(2n-3)-1)\log |c| - n(2n-2-\delta)\log X.
    \end{align}
    It remains to upper bound $\log |c|$, which we achieve by showing that the elements $\omega_i,\eta_j$ are not too far removed from the unit circle.
    Indeed, \cite[Theorem 1]{soundararajan-equidistributionzeroespolys} shows that 
    \begin{align*}
        \prod_{i=1}^r |\omega_i|^{-1} \prod_{j=1}^k |\eta_j|^{-1} \leq \frac{n}{|f_n/f_0|} \sum_{i=0}^n |f_i/f_0|^2.
    \end{align*}
    On the other hand, $|c|^2 \prod |\omega_i| \prod |\eta_j| = |f_0||f_n|$ by \eqref{eq_realdecompositionpoly}, so 
    \begin{align*}
        |c|^2 = |f_0||f_n|\prod_{i=1}^r |\omega_i|^{-1} \prod_{j=1}^k |\eta_j|^{-1} \leq n \sum_{i=0}^n |f_i|^2 \leq n(n+1) X^{2},
    \end{align*}
    hence $\log|c|\leq \frac{1}{2} \log(n(n+1)) + \log X$.
    Combining the latter upper bound with \eqref{eq1_proofformuladensity1} concludes the proof.
\end{proof}

\subsection{Height functions}

Let $f\in \mathcal{F}(X)$ and let $\pi\colon X_f\rightarrow \P^1_{\Q}$ be the hyperelliptic curve described by the equation $z^2 = f(x,y)$. 
If $D$ is an effective divisor on $X_f$ of degree $m$, let $U(x,y) = u_0 x^m + \cdots +u_m y^m\in \Z[x,y]$ be a primitive binary form of degree $m$ vanishing precisely at the points of $\pi(D)$ with multiplicity. 
Then $U$ is uniquely determined up to sign and we may define
\begin{align*}
    h(D) =h(U) =  \log \max(|u_0|, \dots,|u_m|). 
\end{align*}
This is a `naive height' on the set of effective divisors on $X_f$.
For example, if $D$ has degree $1$, corresponding to a point $P\in X_f(\Q)$, then $h(D)$ equals $h(\pi(P))$, where $h(\alpha)$ denotes the logarithmic normalized Weil height of an element $\alpha \in \P^1(\bar{\Q})$.
More generally, by \cite[Theorem VIII.5.9]{Silverman-arithmeticellcurvesbook} we have:
\begin{lemma}\label{lemma_heightalgebraicpointminpoly}
    If $D$ is the Galois orbit of an algebraic point $P\in X_f(\bar{\Q})$ of degree $m$, then $|h(D) - m h(\pi(P)) | \leq m \log 2$.
\end{lemma}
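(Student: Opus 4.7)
The plan is to invoke the classical comparison between the projective Weil height of a binary form and the sum of the Weil heights of its roots. Recast as a statement about binary forms, Silverman's Theorem VIII.5.9 asserts: if $U(x,y) = u\prod_{i=1}^{m}(\beta_i x - \alpha_i y) \in \bar{\Q}[x,y]$ factors into linear forms with roots $(\alpha_i:\beta_i)\in \P^1(\bar{\Q})$, then
\[
\left|h_{\mathrm{proj}}(U) - \sum_{i=1}^{m} h((\alpha_i:\beta_i))\right| \leq m\log 2,
\]
where $h_{\mathrm{proj}}(U)$ denotes the projective Weil height of the coefficient vector of $U$. When $U \in \Z[x,y]$ is primitive in the sense of the preceding definition, this projective Weil height agrees with the naive definition $h(U) = \log \max_i |u_i|$ used in the lemma, because for a primitive integer vector only the archimedean place contributes to the projective Weil height.

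Given this input, the argument is brief. Let $\pi(P) = (\alpha:\beta) \in \P^1(\bar{\Q})$. Since $P$ has degree $m$, the Galois orbit $D$ consists of $m$ distinct geometric points $\sigma(P)$ as $\sigma$ runs over a set of coset representatives of $\mathrm{Stab}(P)\leq \mathrm{Gal}(\bar{\Q}/\Q)$. Pushing forward along $\pi$, $\pi(D)$ is an effective divisor on $\P^1_{\Q}$ of degree $m$ whose support, as a multiset, is $\{\sigma(\pi(P))\}_{\sigma}$; repetitions may occur (for example when $P$ and $\iota P$ are Galois conjugate), but this causes no difficulty. The primitive binary form $U$ associated with $D$ in the definition of $h(D)$ is then, up to sign, a primitive integer representative of $\prod_\sigma(\sigma\beta\cdot x - \sigma\alpha\cdot y)$.

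Applying the cited inequality to this factorization yields
\[
\left|h(U) - \sum_\sigma h(\sigma\pi(P))\right| \leq m\log 2.
\]
Because the Weil height is Galois invariant, each term $h(\sigma\pi(P))$ equals $h(\pi(P))$, so the sum is $m\cdot h(\pi(P))$. Since $h(D) = h(U)$ by definition, the desired bound $|h(D) - m\cdot h(\pi(P))|\leq m\log 2$ follows. The only bookkeeping point is the identification of the naive height $h(U) = \log\max|u_i|$ with the projective Weil height used in Silverman's reference, and this is immediate from primitivity; I do not foresee any real obstacle.
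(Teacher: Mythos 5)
Your proof is correct and takes essentially the same approach as the paper, which simply cites \cite[Theorem~VIII.5.9]{Silverman-arithmeticellcurvesbook}; your write-up fills in the routine translations (homogenizing to binary forms, identifying the naive integral height of the primitive form $U$ with the projective Weil height, and using Galois invariance of the Weil height to collapse the sum to $m\cdot h(\pi(P))$).
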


\begin{theorem}\label{theorem_upperboundvectorlengthusingheight}
    Suppose that $f \in \mathcal{F}_{\delta}(X)$ and that $D$ is an effective divisor on $X_f$ of degree $2g-1$.
    Then there exists $(A,B)\in V_f(\Z)$ and a primitive $w\in W=\Z^n$ such that 
    \begin{align*}
        n\cdot \log(w,w)_{H_{A,B}} - \log \det H_{A,B} \leq n\cdot h(D) - (n+1)\log X + n\delta \log X + \kappa_{n},
    \end{align*}
    where $\kappa_n$ is a constant that only depends on $n$.
\end{theorem}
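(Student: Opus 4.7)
The plan is to assemble the pieces already in place: Theorem \ref{thm_tautological_section_is_saturated} to produce the integral orbit and the saturated tautological section $\overline{1}$, Proposition \ref{prop_basicpropsreductioncovariant} to transport the reduction covariant, Proposition \ref{prop_normof1} and Lemma \ref{lemma: determinant of reduction covariant} to evaluate the two quantities appearing on the left-hand side of the desired inequality, and finally Proposition \ref{prop_massagingformuladensity1family} for the analytic bound coming from the density-$1$ condition.

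First I would verify the hidden hypothesis $D \cap S_{f,\Q} = \emptyset$ required throughout Section \ref{sec_integral_orbit_representatives}. Because $f \in \mathcal{F}_\delta(X)$ is irreducible of degree $n = 2g+2$ over $\Q$ and $f_0 \neq 0$, the dehomogenisation $f(X, 1) \in \Q[X]$ is irreducible of degree $n$, so every closed point of $S_{f,\Q}$ has residue field of degree exactly $n$ over $\Q$. Since $D$ is a $\Q$-rational effective divisor of degree $2g-1 = n-3 < n$, it cannot meet $S_f$.

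Next I would apply Theorem \ref{thm_tautological_section_is_saturated} with $A = \Z$ to produce a $\Z$-basis $\mathcal{B} = \{b_1, \dots, b_n\}$ of the $\Z$-lattice $M \leq M_\Q$ associated to $(f, D)$, whose corresponding pair of symmetric Gram matrices defines an element $(A, B) \in V_f(\Z)$. Let $w \in \Z^n$ denote the coordinate vector of $\overline{1} \in M$ in the basis $\mathcal{B}$; part (3) of Theorem \ref{thm_tautological_section_is_saturated}, the saturation of $\overline{1}$, is precisely the statement $\gcd(w_1, \dots, w_n) = 1$, so $w$ is primitive. The $\R$-linear isomorphism $M_\R \to W_\R = \R^n$ induced by $\mathcal{B}$ intertwines the pair $(\mathrm{ev}_y \circ \varphi_\Q, \mathrm{ev}_x \circ \varphi_\Q)$ on $M_\R$ with $(A, B)$ on $\R^n$; Proposition \ref{prop_basicpropsreductioncovariant} then identifies the reduction covariant $H$ on $M_\R$ with $H_{A, B}$, yielding $(w, w)_{H_{A,B}} = (\overline{1}, \overline{1})_H$.

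Combining Proposition \ref{prop_normof1} (which computes $(\overline{1}, \overline{1})_H$ as a sum over the roots of $f$) and Lemma \ref{lemma: determinant of reduction covariant} (which gives $\det H_{A, B} = |c|$), the left-hand side of the desired inequality becomes exactly the left-hand side of the bound in Proposition \ref{prop_massagingformuladensity1family}, after the identification $\eta_j = \omega_{r+j}^{-1}$. Applying that proposition with $m = 2g-1$, so that the constant $\kappa_{n,m}$ depends only on $n$, gives the inequality with $\kappa_n := \kappa_{n, 2g-1}$. The hard part of the overall argument is not this assembly but rather the saturation statement of Theorem \ref{thm_tautological_section_is_saturated}(3): this is what upgrades a generic $\Z$-lattice vector to a \emph{primitive} one in $\Z^n$, which is essential, since the geometry-of-numbers arguments of Section \ref{sec: equidistribution} only detect primitive lattice vectors of controlled norm.
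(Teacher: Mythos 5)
Your proposal is correct and follows essentially the same assembly-line argument as the paper's proof, which invokes Proposition \ref{prop_prepnormof1computation} (itself a restatement of Theorem \ref{thm_tautological_section_is_saturated} for $A = \Z$) together with Proposition \ref{prop_normof1}, Lemma \ref{lemma: determinant of reduction covariant}, and Proposition \ref{prop_massagingformuladensity1family}; your expansion of the disjointness $D \cap S_{f,\Q} = \emptyset$ via residue-field degrees is exactly the intended justification of the paper's terse remark. One small quibble with the closing commentary: the primitivity of $w$ is part of the theorem statement and worth establishing, but it is not strictly needed downstream, since Corollary \ref{corollary_polyswithsmallnormvectorrare} and the proof of Theorem \ref{theorem_lowerboundheightdivisor} only ever use a \emph{nonzero} short vector (indeed a non-primitive short vector would only yield an even shorter primitive one).
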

\begin{proof}
    Since $f = f_0 x^n + \cdots + f_n y^n \in \mathcal{F}_{\delta}(X)$ is irreducible, $f_0f_n \neq 0$ and $D$ does not intersect the irreducible Weierstrass locus $S_f$.
    By Propositions \ref{prop_prepnormof1computation}, \ref{prop_normof1} and Lemma \ref{lemma: determinant of reduction covariant}, there exists an $(A,B)$ and a primitive $w\in W$ such that $n\log (w,w)_{H_{A,B}} - \det H_{A,B}$ equals the left hand side of \eqref{equation: upper bound formula density 1 family}, so we conclude using Proposition \ref{prop_massagingformuladensity1family}.
\end{proof}

\subsection{Proof of the main theorem}

Let $\mathcal{F} = \cup_{X>0} \mathcal{F}(X)$ be the set of all binary forms $f(x,y)\in \Z[x,y]$ of degree $n=2g+2$ and nonzero discriminant, ordered by height. If $S \subset \mathcal{F}$ is a subset and $\alpha \in [0, 1]$, we say that $S$ has density $\alpha$ if
\[ \lim_{X \to \infty} \frac{ \# (S \cap \mathcal{F}(X)) }{ \# \mathcal{F}(X) } = \alpha. \]
\begin{theorem}\label{theorem_lowerboundheightdivisor}
    Let $\epsilon>0$ be arbitrary, and let $S_\epsilon$ denote the set of $f \in \mathcal{F}$ such that every effective divisor $D$ on $X_f$ of odd degree $\leq 2g-1$ satisfies
    \begin{align}\label{eq_heightinequalitydivisor}
        h(D) \geq \left(1 + \frac{1}{2g+2} -\epsilon \right) \log \height f.
    \end{align}
    Then $S_\epsilon$ has density $1$. 
\end{theorem}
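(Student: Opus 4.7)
The plan is to combine the integral-orbit construction of Theorem~\ref{theorem_upperboundvectorlengthusingheight} with the cusp-measure bound of Corollary~\ref{corollary_polyswithsmallnormvectorrare}. I will set $\delta := \epsilon/2$ and restrict attention to the density-$1$ subfamily $\mathcal{F}_\delta$ of Proposition~\ref{prop_subfamilyhasdensity1}. It then suffices to show that the set of $f \in \mathcal{F}_\delta(X)$ for which some effective $\Q$-rational divisor of odd degree $\leq 2g-1$ violates \eqref{eq_heightinequalitydivisor} has density zero inside $\mathcal{F}(X)$.

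The first step is to reduce to divisors of degree exactly $2g-1$. Given any $\Q$-rational effective divisor $D$ of odd degree $m \leq 2g-1$, I will pad it by $E := \tfrac{2g-1-m}{2}\pi^{-1}((0{:}1))$, a $\Q$-rational effective divisor of even degree $2g-1-m$ disjoint from $S_f$ (using $f_n \neq 0$, automatic for irreducible $f$ of degree $\geq 2$). The associated primitive binary form is $U_{D+E} = U_D \cdot x^{2g-1-m}$, whose coefficient vector is that of $U_D$ followed by zeros, so $h(D+E) = h(D)$ exactly. Moreover, since $f$ is irreducible, the closed subscheme $S_{f,\Q} \subset \P^1_\Q$ is a single closed point of degree $n$, so any $\Q$-rational effective divisor of degree $<n$ on $X_f$ is automatically disjoint from $S_f$; hence $D+E$ meets the hypotheses of Theorem~\ref{theorem_upperboundvectorlengthusingheight}.

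Next, assuming $f \in \mathcal{F}_\delta(X)$ is bad (witnessed by some such $D$), I will apply Theorem~\ref{theorem_upperboundvectorlengthusingheight} to $D+E$ and substitute $h(D+E) = h(D) < (1+\tfrac{1}{n}-\epsilon)\log\height(f) \leq (1+\tfrac{1}{n}-\epsilon)\log X$. With $\delta = \epsilon/2$, the resulting inequality simplifies to
\[
n\log(w,w)_{H_{A,B}} - \log\det H_{A,B} \;<\; -\tfrac{n\epsilon}{2}\log X + \kappa_n
\]
for some $(A,B) \in V_f(\Z)$ and primitive $w \in W$. Exponentiating, $H_{A,B}$ admits an $\epsilon_X$-small vector with $\epsilon_X := e^{\kappa_n/(2n)} X^{-\epsilon/4}$, in the sense of \S\ref{subsec: reduction covariant def}.

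To finish, I will fix $\epsilon_* > 0$: for all sufficiently large $X$, $\epsilon_X \leq \epsilon_*$, so every bad $f \in \mathcal{F}_\delta(X)$ is counted by the numerator of \eqref{equation: lim sup less than mu(U)} with $\epsilon = \epsilon_*$. Corollary~\ref{corollary_polyswithsmallnormvectorrare} bounds this upper density by $c_1 \mu(U_{\epsilon_*})$, and since $\mu(U_{\epsilon_*}) \to 0$ as $\epsilon_* \to 0$ and $\mathcal{F}_\delta$ has density $1$ in $\mathcal{F}$, the bad set has density zero. All the substantive content (integral orbits, equidistribution of the reduction covariant) is packaged in the cited theorems; the main delicate point is the padding step, where the choice $U_E = x^{2g-1-m}$ is made precisely so that primitivity, the binary-form height, and disjointness from $S_f$ are simultaneously preserved.
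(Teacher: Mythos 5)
Your proposal is correct and follows essentially the same route as the paper's proof: pad odd-degree divisors up to degree $2g-1$ without changing the naive height, restrict to the density-$1$ subfamily $\mathcal{F}_\delta$, feed Theorem \ref{theorem_upperboundvectorlengthusingheight} through Corollary \ref{corollary_polyswithsmallnormvectorrare}, and send the auxiliary small-vector parameter to zero. The only cosmetic differences are that you pad with $\pi^{-1}((0{:}1))$ (multiplying $U$ by $x^2$) rather than $\pi^{-1}(\infty)$ (multiplying by $y^2$), you fix $\delta=\epsilon/2$ rather than leaving $\delta\in(0,\epsilon)$ arbitrary, and you unwind the application of Corollary \ref{corollary_polyswithsmallnormvectorrare} a bit more explicitly; none of these affect the substance.
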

\begin{proof}
    Adding the degree-$2$ divisor $\pi^{-1}(\infty)$ to $D$ has the effect of multiplying $U$ by $y^2$, which does not affect the quantity $h(D)$. 
    It therefore suffices to prove that the set $S_\epsilon' \subset S_\epsilon$ of $f\in \mathcal{F}$ satisfying \eqref{eq_heightinequalitydivisor} only for $D$ of degree $2g-1$ has density 1. 
    
    Let $\mathcal{F}^{\text{bad}}$ be the subset of those $f\in \mathcal{F}$ such that there exists a degree $2g-1$ divisor $D$ for which \eqref{eq_heightinequalitydivisor} does \emph{not} hold; it suffices to prove that $\mathcal{F}^{\text{bad}}\subset \mathcal{F}$ has density zero. Fix $\delta \in (0, \epsilon)$. 
    By Proposition \ref{prop_subfamilyhasdensity1}, it suffices to prove that $\#(\mathcal{F}^{\text{bad}} \cap \mathcal{F}_{\delta}(X)) = o(X^{2g+3})$.

    Write $\epsilon_1 = \epsilon - \delta$.
    Let $\mathcal{F}(X)^{\text{short}}$ be the subset of $f\in \mathcal{F}(X)$ for which there exists elements $(A,B) \in V_f(\Z)$ and $w\in W - \{ 0 \}$ that satisfy 
    \begin{align}\label{eq1_proofmaintheorem}
         n\log (w,w)_{H_{A,B}} - \log \det H_{A,B}< -\epsilon_1 \log X.
    \end{align}
    Theorem \ref{theorem_upperboundvectorlengthusingheight} implies that $\mathcal{F}^{\text{bad}} \cap \mathcal{F}_{\delta}(X)\subset \mathcal{F}(X)^{\text{short}}$ for sufficiently large $X$.
    The equidistribution theorem, more specifically Corollary \ref{corollary_polyswithsmallnormvectorrare}, shows that $\#\mathcal{F}(X)^{\text{short}} = o(X^{2g+3})$.
    Therefore $\#(\mathcal{F}^{\text{bad}} \cap \mathcal{F}_{\delta}(X)) = o(X^{2g+3})$, as required.
\end{proof}

This theorem and Lemma \ref{lemma_heightalgebraicpointminpoly} immediately imply:

\begin{corollary}
    Let $\epsilon>0$ be arbitrary. Then for $100\%$ of $f\in \mathcal{F}(X)$, every algebraic point $P\in X_f(\bar{\Q})$ of odd degree $m\leq 2g-1$ satisfies
    \begin{align*}
        m\cdot h(\pi(P)) \geq \left(1 + \frac{1}{2g+2} -\epsilon \right) \log \height(f).
    \end{align*}
\end{corollary}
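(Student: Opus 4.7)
The plan is to deduce the corollary from Theorem \ref{theorem_lowerboundheightdivisor} by associating to each algebraic point its Galois orbit divisor. Given $P \in X_f(\bar{\Q})$ of odd degree $m \leq 2g-1$, set $D_P = \sum_{\sigma} \sigma(P)$, where $\sigma$ ranges over a set of coset representatives for $\Gal(\bar{\Q}/\Q(P))$ in $\Gal(\bar{\Q}/\Q)$. This is an effective divisor of degree $m$ on $X_f$, defined over $\Q$, so Theorem \ref{theorem_lowerboundheightdivisor} is applicable to $D_P$ for density-$1$ of $f$. Moreover, since $\pi : X_f \to \P^1_\Q$ has degree $2$, the index $[\Q(P):\Q(\pi(P))]$ divides both $2$ and $m$, and hence equals $1$ because $m$ is odd; consequently $\pi(P)$ is an algebraic point of $\P^1$ of the same degree $m$ as $P$, and its Galois orbit has exactly $m$ elements.

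To make the deduction, I would fix an auxiliary $\epsilon' \in (0,\epsilon)$ and apply Theorem \ref{theorem_lowerboundheightdivisor} with the parameter $\epsilon'$ in place of $\epsilon$, obtaining a density-$1$ set $S_{\epsilon'} \subset \mathcal{F}$ on which every effective divisor of odd degree $\leq 2g-1$ has height at least $(1 + \tfrac{1}{2g+2} - \epsilon') \log \height(f)$. Combining this with Lemma \ref{lemma_heightalgebraicpointminpoly} applied to $D_P$ gives, for every such $f$ and every $P$ of odd degree $m \leq 2g-1$,
\[ m \cdot h(\pi(P)) \geq h(D_P) - m \log 2 \geq \left(1 + \frac{1}{2g+2} - \epsilon'\right) \log \height(f) - (2g-1)\log 2. \]

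The only remaining task is to absorb the additive constant $(2g-1)\log 2$ into the slack $(\epsilon-\epsilon')\log \height(f)$. I would choose $X_0$ large enough that $(\epsilon-\epsilon') \log X \geq (2g-1)\log 2$ for all $X \geq X_0$; the set of $f \in \mathcal{F}$ with $\height(f) < X_0$ is finite, hence of density $0$, so removing it does not affect the density of $S_{\epsilon'}$. For the remaining $f \in S_{\epsilon'}$, the desired inequality $m \cdot h(\pi(P)) \geq (1 + \tfrac{1}{2g+2} - \epsilon) \log \height(f)$ holds for all odd-degree points $P$ of degree at most $2g-1$.

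There is no substantive obstacle here: the corollary is a routine unpacking of Theorem \ref{theorem_lowerboundheightdivisor}, and the lemma supplies the standard Weil-height comparison between a point and the content of the minimal polynomial defining its Galois orbit. The only mildly delicate observation is that odd degree forces $\pi(P)$ and $P$ to have the same degree, which ensures that $D_P$ is genuinely an effective degree-$m$ divisor to which Theorem \ref{theorem_lowerboundheightdivisor} can be applied.
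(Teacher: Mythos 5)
Your proof is correct and matches the paper's approach, which is simply to combine Theorem \ref{theorem_lowerboundheightdivisor} with Lemma \ref{lemma_heightalgebraicpointminpoly} by passing to the Galois-orbit divisor $D_P$. The only extra work you do, correctly, is to make explicit how the additive error $m\log 2$ from the lemma is absorbed by replacing $\epsilon$ with a slightly smaller $\epsilon'$ and discarding the finitely many $f$ of small height; the paper leaves this as "immediately imply."
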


\bibliographystyle{abbrv}

\end{document}